\newtheorem{theorem}{Theorem}[section]
\newtheorem{corollary}[theorem]{Corollary}
\newtheorem{lemma}[theorem]{Lemma}
\theoremstyle{definition}
\newtheorem{definition}[theorem]{Definition}
\newtheorem{remark}[theorem]{Remark}
\numberwithin{equation}{section}
\newcommand{\R}{{\mathbb R}}
\title[Global $L^\infty$ and decay estimate]{Global $L^\infty$ and decay estimate for fractional $p$-Laplacian equations in $D^{s,p}(\R^N)$}
\author[S. Carl]{Siegfried Carl}
\address[S. Carl]{Institut f\"ur Mathematik,  Martin-Luther-Universit\"at Halle-Wittenberg,
D-06099 Halle, Germany}
\email{\tt siegfried.carl@mathematik.uni-halle.de}
\author[K. Perera]{Kanishka Perera}
\address[K. Perera]{Department of Mathematical Sciences, Florida Institute of Technology,
150 W University Blvd, Melbourne, FL 32901, USA}
\email{\\ kperera@fit.edu}
\author[H. Tehrani]{Hossein Tehrani}
\address[H. Tehrani]{Department of Mathematical Sciences,  University of Nevada Las Vegas Box 454020, USA}
\email{\tt tehranih@unlv.nevada.edu}
\keywords{Homogeneous fractional Sobolev space, Fractional p-Laplacian, $L^\infty$-estimate, Wolff potential, Tail function,  Pointwise estimate, Decay estimate, Local minimizer}
\subjclass[2010]{35B38,35B40, 35B45, 35B51, 35J20, 35J60, 31C05}
\date{\today}
\begin{document}

\begin{abstract}
In this paper we  present a new global $L^\infty$-estimate for solutions $u\in D^{s,p}(\R^N)$ of the fractional $p$-Laplacian equation
$$
u\in D^{s,p}(\R^N): (-\Delta_p)^s u=f(x,u) \quad\mbox{in }\R^N,
$$
of the form
$$
\|u\|_{\infty}\le C \Phi(\|u\|_{\beta})
$$
for some  $\beta> p$, where $\Phi: \R^+\to \R^+$ is a data independent function with $\lim_{s\to 0^+}\Phi(s)=0$.
The  obtained $L^\infty$-estimate is used to prove a decay estimate based on pointwise estimates in terms of nonlinear Wolff potentials.
Taking advantage of both the $L^\infty$ and decay estimate we prove a Brezis-Nirenberg type result regarding $D^{s,2}(\R^N)$ versus $C_b\left(\R^N, 1+|x|^{N-2s}\right)$ local minimizers.
\end{abstract}

\maketitle

%%%%%%%%%%%%%%%%%%%%%%%%%%%%%%%%%%%%%%%%%%%%%%%%%%%%
%%%%%%%%%%%%%SECTION1%%%%%%%%%%%%%%%%%%%%%%%%%%%%%%%%%%%

\section{Introduction and Main Results}\label{S1}
\noindent
Let  $s\in (0,1)$ and assume throughout $2\leq p < \frac{N}{s}$. Let $ D^{s,p}(\R^N)$ be the homogeneous fractional Sobolev space which is the completion of $C_c^{\infty}(\R^N)$ with respect to the Gagliardo norm $[\cdot]_{s,p}$ given by
\begin{equation}\label{G-101}
[u]_{s,p}=\left(\int_{\R^N}\int_{\R^N}\frac{|u(x)-u(y)|^p}{|x-y|^{N+sp}} dxdy\right)^{\frac{1}{p}}.
\end{equation}

\smallskip

\noindent Recalling the fractional Sobolev-Gagliardo inequality of the form
\begin{equation}\label{G-102}
S_{s,p}\|u\|_{L^{p^*}(\R^N)}^p\le [u]_{s,p}^p,\quad\forall\ u\in C_c^\infty(\R^N),
\end{equation}
with $S_{s,p}>0$ some constant (see e.g. \cite{BGV-2021} or \cite{Mazya}),  the completion $D^{s,p}(\R^N)$ can be identified with the following function space

\begin{equation}\label{G-103}
D^{s,p}(\R^N)=\{u\in L^{p^*}(\R^N): [u]_{s,p}<\infty\},
\end{equation}
where $p^*=\frac{Np}{N-sp}$ denotes the fractional critical Sobolev exponent. The space $(D^{s,p}(\R^N), [\cdot]_{s,p})$ is a uniformly convex Banach space and thus reflexive, see  \cite{BGV-2021}).

Consider the quasilinear fractional elliptic equation in $\R^N$
\begin{equation}\label{G-106}
u\in D^{s,p}(\R^N): (-\Delta_p)^s u=f(x,u),
\end{equation}
where the  fractional $p$-Laplacian $(-\Delta_p)^s$ is the nonlinear nonlocal operator
formally defined by
\begin{equation}\label{G-105}
(-\Delta_p)^su(x)=2\lim_{\varepsilon\searrow 0}\int_{\R^N\setminus B_{\varepsilon}(x)}\frac{|u(x)-u(y)|^{p-2}(u(x)-u(y))}{|x-y|^{N+sp}}\,dy,\quad x\in\R^N.
\end{equation}
We denote the standard norms of the Lebesgue spaces  $L^r(\R^N)$   by $\|\cdot \|_{r}$,  $1\le r\le \infty$,  and  use $C$, to denote a constant whose exact value is immaterial and may change from line to line. To indicate the dependence of the constant on the data, we write $C=C(a,b,\cdot,\cdot,\cdot)$ with the understanding that this dependence is increasing in its variables.

Our main goals of this paper are three-fold.

First, we  prove a new global $L^\infty$-estimate for weak solutions of (\ref{G-106}),
where the nonlinear right-hand side $f: \R^N\times\R\to \R$ satisfies
\begin{equation}\label{G-107}
|f(x,t)|\le b_2(x)|t|^{p-1}+b_3(x),
\end{equation}
with coefficient functions $b_2$ and $b_3$ that are supposed to satisfy
the following hypotheses:
\begin{itemize}
\item[(H1)] $b_2\in L^{\frac{q}{p}}(\R^N)$, for some $q>\frac{N}{s}$;
\item [(H2)] $b_3\in L^{\frac{\beta}{p-1}}(\R^N)\cap L^\infty(\R^N)$, for some $\beta > p$.
\end{itemize}
We recall the notion of solution for (\ref{G-106}).
\begin{definition}\label{G-D101}
A function $u\in D^{s,p}(\R^N)$ is a weak solution of equation (\ref{G-106}) if
\begin{eqnarray*}
\langle (-\Delta_p)^su, v\rangle &:=&\int_{\R^N}\int_{\R^N}\frac{|u(x)-u(y)|^{p-2}(u(x)-u(y))(v(x)-v(y))}{|x-y|^{N+sp}} dxdy \\ &=&\int_{\R^N}f(x,u)v dx,\quad\forall\ v\in D^{s,p}(\R^N).
\end{eqnarray*}
\end{definition}
Our first main result reads as follows:
\begin{theorem}\label{G-T101}
Assume hypotheses  (H1)--(H2), and let the growth condition (\ref{G-107}) be satisfied. Let $u\in D^{s,p}(\R^N)$ be a solution of the fractional $p$-Laplacian equation (\ref{G-106}), and suppose $u\in L^\beta(\R^N)$ for some $\beta>p$. Then there exists $\theta_0=\theta_0(p,\beta,N,s)$ with $0<\theta_0\le 1$ such that
\begin{equation}\label{G-108}
\|u\|_{\infty}\le C \max\left\{\|u\|_{\beta}, \|u\|_{\beta}^{\theta_0}\right\},
\end{equation}
where $C=C\left(p,q,N,s, \|b_2\|_{\frac{q}{p}}, \|b_3\|\right)$, with $\|b_3\|:=\|b_3\|_{\frac{\beta}{p-1}}+\|b_3\|_{\infty}$.
\end{theorem}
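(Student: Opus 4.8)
The plan is to run a Moser-type iteration adapted to the nonlocal setting, using truncated powers of $u$ as test functions. First I would fix $M>0$, set $u_M=\min\{|u|,M\}$ and test the weak formulation with $v=|u|^{p(\gamma-1)}u$ (suitably truncated by $u_M^{p(\gamma-1)}$ to stay in $D^{s,p}(\R^N)$), for an exponent $\gamma>1$ to be iterated. The key algebraic inequality here is the standard pointwise bound
\begin{equation*}
|a-b|^{p-2}(a-b)\left(a|a|^{p(\gamma-1)}-b|b|^{p(\gamma-1)}\right)\ge c_{p,\gamma}\,\bigl||a|^{\gamma-1}a-|b|^{\gamma-1}b\bigr|^p,
\end{equation*}
which turns the left-hand side of Definition~\ref{G-D101} into a lower bound for $[\,|u|^{\gamma-1}u\,]_{s,p}^p$ (with truncation), and then I would apply the Sobolev-Gagliardo inequality \eqref{G-102} to get $\||u|^\gamma\|_{p^*}^p$ on the left. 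On the right-hand side, the growth condition \eqref{G-107} produces two terms: $\int b_2|u|^{p-1}\cdot|u|^{p(\gamma-1)+1}=\int b_2|u|^{p\gamma}$ and $\int b_3|u|^{p(\gamma-1)+1}$.

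The $b_2$ term is handled by Hölder with the exponent $q/p$ from (H1): $\int b_2|u|^{p\gamma}\le \|b_2\|_{q/p}\,\||u|^\gamma\|_{pq/(q-p)}^p$, and since $q>N/s$ one checks $pq/(q-p)<p^*$, so this is a lower-order norm that can be interpolated between $\|\cdot\|_{p\gamma}$ (or the base space) and $\|\cdot\|_{p^*}$; choosing $M$ large or restricting $b_2$ to the set where it is large absorbs a small multiple of the $\||u|^\gamma\|_{p^*}^p$ term into the left side. The $b_3$ term is estimated using (H2): split $|u|^{p(\gamma-1)+1}$ via Hölder so that $b_3$ pairs with its $L^{\beta/(p-1)}\cap L^\infty$ norms, and $u$ with $L^\beta$ and $L^{p^*}$ norms; again a small constant times $\||u|^\gamma\|_{p^*}^p$ is absorbed. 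The net effect is a recursive inequality of the form $\|u\|_{\gamma p^*}\le (C\,A^\gamma)^{1/\gamma}\,\|u\|_{\gamma p}^{\,\cdot}$ with constants depending only on the data as claimed; iterating $\gamma_{k+1}=\gamma_k\,p^*/p=\gamma_k(N-sp)^{-1}N$... rather $\gamma_k=(p^*/p)^k$ and summing the geometric series in $\ln C$ and in the exponents yields $\|u\|_\infty\le C\,\|u\|_{\beta}^{\theta}$ for the appropriate $\theta\le1$; the $\max\{\|u\|_\beta,\|u\|_\beta^{\theta_0}\}$ form appears because the interpolation constants are homogeneous of degree one only up to lower-order terms, so the bound is linear for large $\|u\|_\beta$ and sublinear for small $\|u\|_\beta$.

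The main obstacle I anticipate is twofold. First, making the truncation rigorous in the nonlocal framework: one must verify that $v=u_M^{p(\gamma-1)}u$ (or a variant) genuinely lies in $D^{s,p}(\R^N)$ and that the truncated energy inequality passes to the limit $M\to\infty$ by monotone/dominated convergence, which requires care with the double integral and the fact that $|u|^{\gamma-1}u$ need not a priori have finite Gagliardo seminorm until the estimate is proved. Second, and more delicate, is keeping the iteration constants \emph{data-independent in the right way} — i.e. tracking that the constant at each step is $C\,A^\gamma$ with the \emph{same} $A$ and that the dependence on $\|b_2\|_{q/p}$ and $\|b_3\|$ is monotone — so that the infinite product converges and the exponent $\theta_0$ comes out depending only on $p,\beta,N,s$; this hinges on a clean choice of the interpolation parameters at each stage, and on handling the first iteration step separately since $u\in L^\beta\cap L^{p^*}$ but not obviously in $L^{p\gamma_1}$ unless $\beta$ and $p^*$ straddle $p\gamma_1$. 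I would treat the base case by a direct interpolation between $L^\beta$ and $L^{p^*}$ to start the recursion, and then let the generic step run.
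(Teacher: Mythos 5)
Your proposal follows essentially the same Moser-iteration route as the paper: truncated powers $u\min\{|u|,L\}^{\alpha-1}$ as test functions, the algebraic inequality to extract a Gagliardo seminorm of a power of $u$, Sobolev--Gagliardo, H\"older plus interpolation between $L^p$ and $L^{p^*}$ (with Young absorption) for the $b_2$ term, a single H\"older step for the $b_3$ term, then iteration with $\chi = N/(N-sp) = p^*/p$. The one technical point you gesture at but do not supply (``suitably truncated'') is precisely the paper's Lemma~\ref{G-L202}, a case-by-case extension of the standard pointwise bound of Lemma~\ref{G-C201} to the truncated power map $a\mapsto a_L^{1+\gamma}$, which is needed to make the test-function step rigorous before letting $L\to\infty$.
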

Second, consider the fractional $p$-Laplacian equation
\begin{equation}\label{101}
(-\Delta_p)^su=g(x,u)\quad\mbox{in }\R^N
\end{equation}
and assume the hypotheses:
\begin{itemize}
\item[(Hg)] $g: \R^N\times \R\to \R$ is a Cararth\'eodory function satisfying the growth condition
$$
|g(x,t)|\le |a(x)|\left(1+|t|^{\gamma-1}\right),\quad 1\le \gamma<p^*,
$$
\end{itemize}
where the coefficient $a: \R^N\to\R$ is measurable and satisfies the following decay condition:
\begin{itemize}
\item[(Ha)]
\begin{equation}\label{102}
 |a(x)|\le c_aw(x), \ \mbox{ where }\ w(x)=\frac{1}{1+|x|^{N+\alpha}},\ \ x\in\R^N
\end{equation}
for some $\alpha>0$ and $c_a\ge 0$
\end{itemize}

\smallskip

Our second main result, which strongly relies on Theorem \ref{G-T101}, reads as follows:
\begin{theorem}\label{D-T101}
Assume (Hg)--(Ha). If $u\in D^{s,p}(\R^N)$ is a  solution of (\ref{101}), then a decay estimate of the following form holds:
\begin{equation}\label{103}
|u(x)|\le C |x|^{-\frac{N-sp}{p-1}}, \ \forall\ x: |x|\ge 1,
\end{equation}
where $C=C(N,p,s,\|a\|, [u]_{s,p})$ with $\|a\|:=\|a\|_1+\|a\|_{\infty}$.
\end{theorem}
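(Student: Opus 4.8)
The plan is to combine the global $L^\infty$-bound of Theorem \ref{G-T101} with a pointwise estimate for $(-\Delta_p)^s$ in terms of nonlinear Wolff potentials (the nonlocal analogue of the Kilpel\"ainen--Mal\'y / Trudinger--Wang bounds), and to extract the decay rate $|x|^{-(N-sp)/(p-1)}$ from the behaviour of that potential at large scales. The first step is to upgrade $u$ to a bounded solution. Since $u\in D^{s,p}(\R^N)\hookrightarrow L^{p^*}(\R^N)$ with $p^*>p$, and since by (Ha) the coefficient satisfies $|a|\le c_aw$ with $w\in L^1(\R^N)\cap L^\infty(\R^N)$, one has $a\in L^1(\R^N)\cap L^\infty(\R^N)$. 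If $1\le\gamma\le p$, then $|g(x,t)|\le|a(x)|\,|t|^{p-1}+2|a(x)|$, so hypotheses (H1)--(H2) hold with $b_2=|a|$, $b_3=2|a|$ for arbitrary $q>N/s$ and $\beta>p$, and $u\in L^{p^*}$ supplies the required $L^\beta$-integrability with $\beta=p^*$; Theorem \ref{G-T101} then gives $\|u\|_\infty\le C(N,p,s,\|a\|,[u]_{s,p})$. If $p<\gamma<p^*$, one first raises the integrability of $u$ by a bootstrap, exploiting the strict subcriticality $\gamma<p^*$ together with the regularizing effect of $(-\Delta_p)^s$ on $L^m$-data (equivalently, one may invoke a known global boundedness result for subcritical problems on $\R^N$ with decaying coefficient), reducing matters to the previous case.

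Next I set $\mu(x):=g(x,u(x))$, so that $u$ solves $(-\Delta_p)^su=\mu$ in $\R^N$, and by Step~1 together with (Hg)--(Ha),
\[
|\mu(x)|\le|a(x)|\bigl(1+\|u\|_\infty^{\gamma-1}\bigr)\le \frac{C}{1+|x|^{N+\alpha}},\qquad x\in\R^N,
\]
with $C=C(N,p,s,\gamma,\|a\|,[u]_{s,p})$; in particular $\mu\in L^1(\R^N)\cap L^\infty(\R^N)$. I then invoke the pointwise potential estimate for the fractional $p$-Laplacian: there is $C=C(N,p,s)$, \emph{independent of the scale $R$}, such that for all $x_0\in\R^N$ and $R>0$,
\[
|u(x_0)|\le C\Bigl[\mathbf{W}^{\mu}_{s,p}(x_0,R)+\mathrm{Tail}(u;x_0,R)+\Bigl(\tfrac{1}{|B_R(x_0)|}\textstyle\int_{B_R(x_0)}|u|^{p-1}\,dy\Bigr)^{\frac{1}{p-1}}\Bigr],
\]
where $\mathbf{W}^{\mu}_{s,p}(x_0,R)=\int_0^R\bigl(|\mu|(B_\rho(x_0))\,\rho^{sp-N}\bigr)^{1/(p-1)}\,\frac{d\rho}{\rho}$ and $\mathrm{Tail}(u;x_0,R)^{p-1}=R^{sp}\int_{\R^N\setminus B_R(x_0)}|u(y)|^{p-1}|y-x_0|^{-N-sp}\,dy$. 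Because $u\in L^{p^*}(\R^N)$ and $p-1<p^*$, both the tail term and the average term tend to $0$ as $R\to\infty$ (H\"older's inequality), so letting $R\to\infty$ yields $|u(x_0)|\le C\,\mathbf{W}^{\mu}_{s,p}(x_0,\infty)$ for every $x_0$.

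It then remains to estimate $\mathbf{W}^{\mu}_{s,p}(x_0,\infty)$ for $r:=|x_0|\ge1$, which I do by splitting the $\rho$-integral at $\rho=r/2$. For $0<\rho\le r/2$ one has $B_\rho(x_0)\subset\{|y|\ge r/2\}$, hence $|\mu|(B_\rho(x_0))\le C\rho^Nr^{-(N+\alpha)}$, and
\[
\int_0^{r/2}\Bigl(\frac{|\mu|(B_\rho(x_0))}{\rho^{N-sp}}\Bigr)^{\frac{1}{p-1}}\frac{d\rho}{\rho}\le C\,r^{-\frac{N+\alpha}{p-1}}\int_0^{r/2}\rho^{\frac{sp}{p-1}}\,\frac{d\rho}{\rho}=C\,r^{-\frac{N+\alpha-sp}{p-1}}\le C\,r^{-\frac{N-sp}{p-1}},
\]
using $\alpha>0$ and $r\ge1$; for $\rho>r/2$ one uses the crude bound $|\mu|(B_\rho(x_0))\le\|\mu\|_1\le C$, giving
\[
\int_{r/2}^{\infty}\Bigl(\frac{|\mu|(B_\rho(x_0))}{\rho^{N-sp}}\Bigr)^{\frac{1}{p-1}}\frac{d\rho}{\rho}\le C\int_{r/2}^{\infty}\rho^{-\frac{N-sp}{p-1}-1}\,d\rho=C\,r^{-\frac{N-sp}{p-1}},
\]
the integral converging since $(N-sp)/(p-1)>0$. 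Adding the two pieces gives $\mathbf{W}^{\mu}_{s,p}(x_0,\infty)\le C\,|x_0|^{-\frac{N-sp}{p-1}}$, and combined with Step~3 this is exactly \eqref{103}; tracking the constants through Steps~1--4 shows $C=C(N,p,s,\|a\|,[u]_{s,p})$.

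The main obstacle is Step~3: one needs a pointwise potential estimate for $(-\Delta_p)^s$ that is global (the equation holds on all of $\R^N$) with a constant that does not degenerate as $R\to\infty$, so that the nonlocal tail of $u$ can be annihilated in the limit; working only at a fixed scale $R\sim|x_0|$ does not help, since the tail at that scale reproduces the current decay rate and so cannot be improved by iteration. A secondary delicate point is Step~1 when $p<\gamma<p^*$, where $g$ as a function of $(x,t)$ does not fit the structural conditions (H1)--(H2) and a genuine integrability bootstrap (or an appeal to known regularity theory) is required. The decay $\alpha>0$ of $a$ enters only to make the near-$x_0$ part of the Wolff potential subordinate to the fundamental-solution rate $|x|^{-(N-sp)/(p-1)}$, which is itself produced by the far-scale part where only $\mu\in L^1$ is used.
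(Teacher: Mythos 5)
Your overall strategy is close in spirit to the paper's (global $L^\infty$ bound, reduce to measure data $\mu=g(\cdot,u)$, Wolff-potential pointwise bound, split the potential at scale $|x_0|/2$ and estimate), and your Step~4 computation of $\mathbf{W}^\mu_{s,p}(x_0,\infty)\le C|x_0|^{-\frac{N-sp}{p-1}}$ is essentially identical to what the paper does. But there is a genuine gap in Step~3 that the paper goes out of its way to circumvent.

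The pointwise potential estimate you invoke,
\[
|u(x_0)|\le C\Bigl[\mathbf{W}^{\mu}_{s,p}(x_0,R)+\mathrm{Tail}(u;x_0,R)+\Bigl(\tfrac{1}{|B_R|}\textstyle\int_{B_R}|u|^{p-1}\Bigr)^{\frac{1}{p-1}}\Bigr],
\]
is a \emph{two-sided} estimate for a possibly sign-changing solution $u$. The Wolff-potential bound actually available for the fractional $p$-Laplacian (the paper cites \cite[Theorem 1.2]{KLL-2025}, stated as Theorem~\ref{T402}) is one-sided: it gives an \emph{upper} bound $u(x_0)\le C(\inf_{B_{R/2}}u+W^\mu_{s,p}+\mathrm{Tail})$ for a function that is $(-\Delta_p)^s$-\emph{superharmonic} and \emph{nonnegative} in $B_R(x_0)$. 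A solution $u$ of $(-\Delta_p)^s u=g(x,u)$ need not be a supersolution, nor nonnegative, so the estimate does not apply to $u$ directly, and replacing $\inf u$ by an $L^{p-1}$-average is an additional leap that is not justified. This sign obstruction is precisely why the paper, after obtaining $|g(x,u(x))|\le\hat a(x)$ from the $L^\infty$ bound, constructs the two one-signed barrier solutions $v$ and $z$ of $(-\Delta_p)^s v=\hat a$, $(-\Delta_p)^s z=-\hat a$, proves $z\le u\le v$ by the comparison principle, and only then applies the Wolff estimate (Theorems~\ref{T402}--\ref{T403}) to the nonnegative superharmonic functions $v$ and $-z$, for which $\inf_{\R^N}v=\inf_{\R^N}(-z)=0$ makes the infimum term vanish as $R\to\infty$. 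You flag ``global constant as $R\to\infty$'' as the main obstacle, but the real obstruction is the sign and superharmonicity restriction; your argument as written would also need the sandwich step.

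A secondary inaccuracy: you say that for $p<\gamma<p^*$ ``a genuine integrability bootstrap (or an appeal to known regularity theory) is required'' because $g$ does not fit (H1)--(H2). In fact Corollary~\ref{G-C401} handles the full range $1\le\gamma<p^*$ without any bootstrap, by the one-line trick of absorbing the superlinear factor into the coefficient: $b_2(x)=|a(x)||u(x)|^{\gamma-p}$ belongs to $L^{q/p}(\R^N)$ for a suitable $q>N/s$ because $u\in L^{p^*}$ and $\gamma<p^*$, and then (H1)--(H2) hold with $b_3=|a|$. This is worth noting because it is exactly what keeps the whole argument self-contained.
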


\smallskip

\noindent Regarding Theorem \ref{G-T101} and Theorem \ref{D-T101} a few comments are in order.

\smallskip
\begin{remark}\label{G-R101}

\begin{itemize}
\item[(i)] In view of  (\ref{G-103}), the assumption $u\in L^\beta(\R^N)$ for some $\beta>p$ for a solution in $D^{s,p}(\R^N)$ is trivially satisfied, since $u\in L^{p^*}(\R^N)$ and $p^*>p$. Theorem \ref{G-T101} holds likewise also for solutions $u$ from the fractional Sobolev space $W^{s,p}(\R^N)$ defined by
$$
W^{s,p}(\R^N)=\{u\in L^p(\R^N): [u]_{s,p}<\infty\}
$$
endowed with the norm
$$
\|u\|_{s,p}=\left(\|u\|_{L^p(\R^N)}^p+[u]_{s,p}^p\right)^{\frac{1}{p}},
$$
since $W^{s,p}(\R^N)\subset L^p(\R^N)\cap L^{p^*}(\R^N)$. We note that the inclusion $W^{s,p}(\R^N)\subset  L^{p^*}(\R^N)$ follows from the fractional  Sobolev-Gagliardo inequality (\ref{G-102}) and the density of $C_c^\infty(\R^N)$ in $W^{s,p}(\R^N)$.

\item[(ii)] A priori $L^\infty$-estimates on bounded domains $\Omega$  for solutions of fractional elliptic equations in the fractional Sobolev space $W^{s,p}(\Omega)$, as well as  for solutions in the fractional Sobolev space with variable exponents $W_0^{s,p(\cdot)}(\Omega)$ have been obtained, e.g., in \cite{BDMQ-2018,  L-2019, MPL-2022}, and \cite{HK-2019}, respectively. In particular, in
    \cite[Section 4]{HK-2019}, the authors obtained an estimate (when considered constant exponents) of the form
$$
\|u\|_{\infty}\le C\max\left\{\|u\|^{\tau_1}_{L^{\tilde{q}}(\Omega)}, \|u\|^{\tau_2}_{L^{\tilde{q}}}(\Omega)\right\},
$$
where $\tilde{q}:=\max\{p, q\}$ with $q<p^*$, which formally is similar to the result in Theorem \ref{G-T101}. However, in \cite{HK-2019} the boundedness of the domain plays an important role to get the above $L^\infty$-estimates. The proof of  Theorem \ref{G-T101} for solutions  of the fractional $p$-Laplacian equation in all $\R^N$, which  is carried out within the framework of the homogeneous fractional Sobolev space $D^{s,p}(\R^N)$,  is very much different and  applies likewise to any domain $\Omega\in \R^N$,  see Remark \ref{G-R201}.

\item[(iii)] To the best of our knowledge the $L^\infty$-estimate (\ref{G-108}) in all $\R^N$  is new, which allows to control the   $\|u\|_{\infty}$ norm by an integral norm  $\|u\|_{\beta}$, where $\beta>p$, and thus in particular, by $\|u\|_{p^*}$. As the solution $u$ is in the  homogeneous fractional Sobolev space $D^{s,p}(\R^N)$, finally the $\|u\|_{\infty}$ norm can be controlled by $[u]_{s,p}$. Hence if  $[u]_{s,p}\to 0$, then $\|u\|_{\infty}\to 0$.  The $L^\infty$-estimate together with Wolff potential estimates are the essential ingredients in the proof of Theorem \ref{D-T101}.
\end{itemize}
\end{remark}

\smallskip

Third, based on the results of Theorem \ref{G-T101} and Theorem \ref{D-T101},  a Brezis-Nirenberg type result   regarding $D^{s,2}(\R^N)$ versus $C_b\left(\R^N, 1+|x|^{N-2s}\right)$ local minimizers for the fractional Laplacian equation
$$
u\in D^{s,2}(\R^N): (-\Delta)^su=a(x) g(u)
$$
is proved in Theorem \ref{T501}, which significantly improves  the result of \cite{Ambrosio-23}, see Section \ref{S5}.

\smallskip

\noindent The outline of the paper is as follows: In Section \ref{S2} we  present the new global $L^\infty$-estimate (\ref{G-108}) for solutions $u\in D^{s,p}(\R^N)$ of the fractional $p$-Laplacian equation (\ref{G-106}) that has the form
$$
\|u\|_{\infty}\le C \Phi(\|u\|_{\beta})
$$
for some  $\beta> p$, where $\Phi: \R^+\to \R^+$ is a data independent function with $\lim_{s\to 0^+}\Phi(s)=0$, and prove  Theorem \ref{G-T101} along with some useful applications. In  Section \ref{S3} we are going to prove pointwise estimates of fractional $p$-Laplacian supersolutions in terms of nonlinear Wolff potentials and tail functions in all $\R^N$, and in Section \ref{S4} we present the proof of Theorem \ref{D-T101}, which relies on both the a priori  $L^\infty$-estimate of Theorem \ref{G-T101} and the pointwise estimates of Section \ref{S3}. In Section \ref{S5},
taking advantage of both the $L^\infty$ and decay estimates of the preceding sections,  we prove a Brezis-Nirenberg type result regarding $D^{s,2}(\R^N)$ versus $C_b\left(\R^N, 1+|x|^{N-2s}\right)$ local minimizers.

%%%%%%%%%%%%%%%%%%%%%%%%%%%%%%%%%%%%%%%%%%%%%%%%%%%%
%%%%%%%%%%%%%SECTION2%%%%%%%%%%%%%%%%%%%%%%%%%%%%%%%%%%%
\section{$L^\infty$ estimate}\label{S2}
%%%%%%%%%%%%%%%%%%%%%%%%%%%%%%%%%%%%%%%%%%%%%%%%%%%%%%%%%%%%%%%%%%%%%%%%%
\subsection{Preliminary results}\label{S21}
%%%%%%%%%%%%%%%%%%%%%%%%%%%%%%%%%%%%%%%%%%%%%%%%%%%%
Throughout this section we use the notation
$$
a^{1+\gamma}:=a |a|^\gamma\quad\mbox{for any $a\in\R$ and $\gamma>0$}.
$$
On $\R$ we introduce the function $a\mapsto  a_L^{1+\gamma}$ with $\gamma>0$ and $L>0$ defined by
$$
a_L^{1+\gamma}=a\min\{|a|,L\}^\gamma.
$$
Note that this definition satisfies the following simple properties, which will be freely used throughout the presentation below.
$$
|a_L^{1+\gamma}|=|a|_L^{1+\gamma},\,\quad\quad |a_L^{1+\gamma}|\leq |a|^{1+\gamma},\,  \quad\quad  |a|_L^{(1+\gamma_1)(1+\gamma_2)}\leq (|a|_L^{1+\gamma_1})^{1+\gamma_2}.
$$
Next we recall \cite[Lemma A.2]{BP-2016} (also see lemma 2.3 in \cite{IMS-2020}) from which for $g(t)= t^{1+p\gamma}$ one easily deduces the following:
%\
\begin{lemma}\label{G-C201}
For any $a,b\in\R$ and $\gamma>0$,  the following inequality holds:
\begin{equation}\label{G-201}
\left|a^{1+\gamma}-b^{1+\gamma}\right|^p\le C(1+p\gamma)^{p-1}(a-b)^{p-1}\left(a^{1+p\gamma}-b^{1+p\gamma}\right),
\end{equation}
where $C=C(p)$.
\end{lemma}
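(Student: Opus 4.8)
The plan is to read the claimed inequality as the specialization of \cite[Lemma A.2]{BP-2016} to the increasing function $g(t)=t^{1+p\gamma}=t|t|^{p\gamma}$, and to reproduce the short self-contained argument. Introduce $\Psi(t):=t^{1+\gamma}=t|t|^\gamma$ and observe that, since $\gamma>0$, one has $\Psi\in C^1(\R)$ with $\Psi'(t)=(1+\gamma)|t|^\gamma$ for all $t$ (the value at $t=0$ being $0$); likewise $g\in C^1(\R)$ with $g'(t)=(1+p\gamma)|t|^{p\gamma}$. The key algebraic identity is then
\[
\Psi'(t)^p=(1+\gamma)^p|t|^{p\gamma}=\frac{(1+\gamma)^p}{1+p\gamma}\,g'(t),\qquad t\in\R .
\]
Since both sides of (\ref{G-201}) are unchanged under the swap $(a,b)\mapsto(b,a)$ once $(a-b)^{p-1}$ is read as $|a-b|^{p-2}(a-b)$ (note that $g(a)-g(b)$ has the same sign as $a-b$ because $g$ is strictly increasing, so the right-hand side is genuinely nonnegative), and the inequality is trivial when $a=b$, it suffices to treat the case $a>b$.

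For $a>b$, write the difference as an integral and apply H\"older's inequality with exponents $p$ and $p/(p-1)$:
\[
\Psi(a)-\Psi(b)=\int_b^a\Psi'(t)\,dt\le(a-b)^{\frac{p-1}{p}}\left(\int_b^a\Psi'(t)^p\,dt\right)^{\frac1p}.
\]
Raising to the $p$-th power and inserting the identity above together with the fundamental theorem of calculus for $g$ gives
\[
\bigl(\Psi(a)-\Psi(b)\bigr)^p\le(a-b)^{p-1}\int_b^a\Psi'(t)^p\,dt=\frac{(1+\gamma)^p}{1+p\gamma}\,(a-b)^{p-1}\bigl(g(a)-g(b)\bigr).
\]
Finally, since $p\ge 1$ and $\gamma>0$ we have $1+\gamma\le 1+p\gamma$, hence $\dfrac{(1+\gamma)^p}{1+p\gamma}\le(1+p\gamma)^{p-1}$, which yields (\ref{G-201}) with $C(p)=1$; in particular any $C\ge 1$ is admissible.

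There is no serious obstacle here: the only points requiring a little care are the sign bookkeeping in reducing to $a>b$ (equivalently, checking that the right-hand side of (\ref{G-201}) is nonnegative, which follows from monotonicity of $t\mapsto t^{1+p\gamma}$), and the verification that $\Psi$ is $C^1$ so that H\"older's inequality and the fundamental theorem of calculus may be applied on $[b,a]$. Everything else is the direct computation above, and the statement as given with a $p$-dependent constant follows a fortiori.
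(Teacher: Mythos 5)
Your proof is correct, and it recovers the sharp constant $C(p)=1$, which is better than the unspecified $C(p)$ asserted in the statement. The argument is sound throughout: the sign bookkeeping via the observation that both sides of \eqref{G-201} are invariant under $(a,b)\mapsto(b,a)$ (reading $(a-b)^{p-1}$ as $|a-b|^{p-2}(a-b)$, consistent with the paper's convention $a^{1+\gamma}:=a|a|^\gamma$) reduces cleanly to $a>b$; the $C^1$-regularity of $\Psi(t)=t|t|^\gamma$ and $g(t)=t|t|^{p\gamma}$ for $\gamma>0$ justifies the FTC; H\"older with exponents $p/(p-1)$ and $p$ gives the key bound; and $(1+\gamma)^p\le(1+p\gamma)^p$ (i.e.\ $1\le p$) closes the estimate.

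As to comparison: the paper does not in fact prove Lemma \ref{G-C201} — it cites \cite[Lemma A.2]{BP-2016} (see also \cite[Lemma 2.3]{IMS-2020}) and deduces the statement by specializing to $g(t)=t^{1+p\gamma}$. That cited lemma is a general statement for increasing $g$, and its proof is precisely the H\"older-plus-FTC argument you have written out for this specific $g$, followed by the integral identity $\int_b^a g'(t)^{1/p}\,dt = \frac{1+\gamma}{(1+p\gamma)^{1/p}}\bigl(\Psi(a)-\Psi(b)\bigr)$. So your proposal is essentially a self-contained reconstruction of the cited result rather than a different route; the only added content is the explicit verification that the resulting prefactor $(1+\gamma)^p/(1+p\gamma)$ is dominated by $(1+p\gamma)^{p-1}$, which shows $C(p)=1$ suffices. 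One could add that your argument only needs $p\ge 1$, while the paper assumes $p\ge 2$ throughout — so your version is marginally more general, though this extra generality is not used.
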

By making use of (\ref{G-201}) and  carefully considering different cases, we are going to prove that the following analogous inequality holds true for the function $a\mapsto a_L^{1+\gamma}$ as well.
\begin{lemma}\label{G-L202}
For any $a,b\in\R \,$ and $\gamma>0$, the following inequality holds:
\begin{equation}\label{G-202}
\left|a_L^{1+\gamma}-b_L^{1+\gamma}\right|^p\le C(1+p\gamma)^{p-1}(a-b)^{p-1}\left(a_L^{1+p\gamma}-b_L^{1+p\gamma}\right),
\end{equation}
where $C=C(p)$.
\end{lemma}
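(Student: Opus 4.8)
The plan is to reduce the inequality \eqref{G-202} for the truncated power $a \mapsto a_L^{1+\gamma}$ to the already-established inequality \eqref{G-201} for the genuine power $a \mapsto a^{1+\gamma}$, by a careful case analysis according to where $a$ and $b$ sit relative to the truncation level $L$ and relative to $-L$. By the symmetry of the claimed inequality under the simultaneous swap $(a,b) \to (b,a)$ (both sides are antisymmetric in a way that makes the product on the right symmetric) and under $(a,b) \to (-a,-b)$ (using $|a_L^{1+\gamma}| = |a|_L^{1+\gamma}$ and oddness), I would assume without loss of generality $a \ge b$; then the left-hand side is manifestly nonnegative and it remains to check the right-hand side is nonnegative with the right constant, the quantity $a_L^{1+p\gamma} - b_L^{1+p\gamma}$ having the same sign as $a - b \ge 0$ since $t \mapsto t_L^{1+p\gamma}$ is nondecreasing.

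The key cases I would organize are: (1) both $|a|, |b| \le L$, where $a_L^{1+\gamma} = a^{1+\gamma}$ and $a_L^{1+p\gamma} = a^{1+p\gamma}$ exactly, so \eqref{G-202} is literally \eqref{G-201}; (2) both $|a|, |b| \ge L$ with $a, b$ of the same sign, where $a_L^{1+\gamma} = a L^\gamma$ (up to sign) so the left side is $L^{p\gamma}|a-b|^p$, while the right side is $C(1+p\gamma)^{p-1}(a-b)^{p-1} L^{p\gamma}(a - b)$ up to sign bookkeeping, and the inequality holds as long as $C(p)(1+p\gamma)^{p-1} \ge 1$, which we may assume; (3) the genuinely mixed cases, say $a$ large ($a > L$) and $|b| \le L$ (and its mirror images), where one should compare $a_L^{1+\gamma} - b_L^{1+\gamma} = L^\gamma a - b^{1+\gamma}$ against the untruncated quantity $\widetilde a^{1+\gamma} - b^{1+\gamma}$ with $\widetilde a := $ the point where truncation begins, i.e. replace $a$ by $L$ in the power while keeping the linear factor; the monotonicity estimate $|L^\gamma a - b^{1+\gamma}| \le |L^{1+\gamma} - b^{1+\gamma}| + L^\gamma(a - L)$ together with $(a-b) \ge (L-b)$ and $(a - b) \ge (a - L)$ lets one bound the left side by a sum of two terms each controlled by \eqref{G-201} (applied with the pair $(L,b)$) and by case (2)'s computation respectively. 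The remaining sub-case where $a$ and $b$ straddle zero with one or both outside $[-L,L]$ is handled the same way by splitting the increment across the intermediate landmarks $-L, 0, L$ and using that the right-hand side, being a product involving $(a-b)$ and the full truncated-power increment, dominates each piece.

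The main obstacle I anticipate is bookkeeping in case (3): keeping track of signs when $p\gamma$ is not an integer (so $a^{1+\gamma}$ genuinely means $a|a|^\gamma$), and making sure that when the increment $a_L^{1+\gamma} - b_L^{1+\gamma}$ is split as a sum of a "truncated-regime" part and a "power-regime" part, the corresponding split of the right-hand side factor $a_L^{1+p\gamma} - b_L^{1+p\gamma}$ is consistent — i.e., that $a_L^{1+p\gamma} - b_L^{1+p\gamma} \ge (L_L^{1+p\gamma} - b_L^{1+p\gamma}) $ and $\ge (a_L^{1+p\gamma} - L_L^{1+p\gamma})$ with both summands nonnegative, which follows from monotonicity but must be invoked cleanly. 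A clean way to avoid some of the casework is to note that for fixed $b$ the map $a \mapsto$ (RHS $-$ LHS) can be shown to have the right sign by checking it at $a = b$ (where it vanishes) and controlling its behavior via the elementary inequality $(x+y)^p \le 2^{p-1}(x^p + y^p)$ applied to the two-piece decomposition of the left-hand side; I would use this to package cases (2) and (3) uniformly, at the cost of enlarging $C(p)$, which is harmless since the constant is allowed to depend only on $p$.
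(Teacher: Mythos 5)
Your proposal is correct and follows essentially the same route as the paper: an exhaustive case analysis on where $a$ and $b$ sit relative to $\pm L$, splitting the increment $a_L^{1+\gamma}-b_L^{1+\gamma}$ at the truncation landmarks, applying the $2^{p-1}$-convexity inequality, and invoking Lemma~\ref{G-C201} on the untruncated pieces. The paper's version is slightly leaner (four cases, with no same-sign restriction needed when both $|a|,|b|>L$, since then $a_L^{1+\gamma}=aL^\gamma$ and $b_L^{1+\gamma}=bL^\gamma$ regardless of sign), but the underlying argument is the same.
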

\begin{proof}
We first note that, without loss of generality, we may assume $b<a$, from which it easily follows that  $b_L^{1+\gamma}\le a_L^{1+\gamma}$, for any $\gamma, L >0$. Therefore we will prove
\begin{equation}\label{G-203}
\left(a_L^{1+\gamma}-b_L^{1+\gamma}\right)^p\le C(1+p\gamma)^{p-1}(a-b)^{p-1}\left(a_L^{1+p\gamma}-b_L^{1+p\gamma}\right)
\end{equation}
by considering the following cases.

\smallskip

\noindent Case 1: $|a|, |b|< L$. Then $a_L^{1+\gamma}=a^{1+\gamma}$ and $b_L^{1+\gamma}=b^{1+\gamma}$, thus (\ref{G-203}) is true due to (\ref{G-201}).

\smallskip

\noindent Case 2: $|b|< L, \, L<a$. Then $a_L^{1+\gamma}=aL^\gamma$ and $b_L^{1+\gamma}=b^{1+\gamma}$, and
\begin{eqnarray*}
\left(a_L^{1+\gamma}-b_L^{1+\gamma}\right)^p&=&\left(aL^{\gamma}-b^{1+\gamma}\right)^p=\left(aL^{\gamma}-L^{1+\gamma}+L^{1+\gamma}-b^{1+\gamma}\right)^p\\
&\le & 2^{p-1}\left[\left(aL^{\gamma}-L^{1+\gamma}\right)^p+\left(L^{1+\gamma}-b|b|^{\gamma}\right)^p\right]\\
&\le & 2^{p-1} (a-L)(a-L)^{p-1}L^{p\gamma} \\
&& + 2^{p-1}C(p)(1+p\gamma)^{p-1}(L-b)^{p-1}\left(L^{1+p\gamma}-b^{1+p\gamma}\right) \\
&\le & 2^{p-1} (a-L)(a-b)^{p-1}L^{p\gamma}\\
&& +2^{p-1}C(p)(1+p\gamma)^{p-1}(a-b)^{p-1}\left(L^{1+p\gamma}-b^{1+p\gamma}\right) \\
&\le & C(p)(1+p\gamma)^{p-1}(a-b)^{p-1}\left[(a-L)L^{p\gamma}+L^{1+p\gamma}-b^{1+p\gamma}\right]\\
&\le & C(p)(1+p\gamma)^{p-1}(a-b)^{p-1}\left(aL^{p\gamma}-b^{1+p\gamma}\right),
\end{eqnarray*}
which proves (\ref{G-203}) in this case.

\smallskip

\noindent Case 3: $|b|>L,$ $|a|<L$, and recall $a>b$, which means $b<-L<a<L$.

Then we have $a_L^{1+\gamma}=a^{1+\gamma}$ and $b_L^{1+\gamma}=bL^\gamma$, which yields
\begin{eqnarray*}
\left(a^{1+\gamma}-bL^\gamma\right)^p&=&\left(a^{1+\gamma}-(-L)L^\gamma-LL^\gamma-bL^\gamma\right)^p\\
&\le & 2^{p-1}\left[\left(a^{1+\gamma}-(-L)L^\gamma\right)^p+\left(-LL^\gamma-bL^\gamma\right)^p\right]\\
&\le & C(p)(1+p\gamma)^{p-1}(a-(-L))^{p-1}\left(a^{1+p\gamma}-(-L)^{1+p\gamma}\right)\\
&& +C(p)(-L-b)(-L-b)^{p-1}L^{p\gamma}\\
&\le & C(p)(1+p\gamma)^{p-1}(a-b)^{p-1}\left[ a^{1+p\gamma}-(-L)^{1+p\gamma}+(-L-b)L^{p\gamma}\right]\\
&\le & C(p)(1+p\gamma)^{p-1}(a-b)^{p-1}\left[ a^{1+p\gamma}- bL^{p\gamma}\right]\\
&\le & C(p)(1+p\gamma)^{p-1}(a-b)^{p-1}\left[ a_L^{1+p\gamma}- b_L^{1+p\gamma}\right],
\end{eqnarray*}
which is (\ref{G-203}).

\smallskip

\noindent Case 4: $|b|>L,$ $|a|>L$ and  $a>b$.

Then we have $a_L^{1+\gamma}=a L^{\gamma}$ and $b_L^{1+\gamma}=bL^\gamma$, and for the left-hand side of (\ref{G-203}) we get
$$
\left(a_L^{1+\gamma}-b_L^{1+\gamma}\right)^p= \left((a-b)L^\gamma\right)^p=(a-b)^pL^{p\gamma}.
$$
The right-hand side of (\ref{G-203}) yields
$$
C(p)(1+p\gamma)^{p-1}(a-b)^{p-1}\left[aL^{p\gamma}-bL^{p\gamma}\right]=C(p)(1+p\gamma)^{p-1}(a-b)^pL^{p\gamma},
$$
which proves  (\ref{G-203}) also in this case, since $C(p)(1+p\gamma)^{p-1}\ge 1$.
\end{proof}
\begin{lemma}\label{G-L203}
For all $a, b,c,d\in\R$ such that $a-b=c-d$ and $\gamma>0$, the following inequality holds:
\begin{equation}\label{G-204}
\left|a_L^{1+\gamma}-b_L^{1+\gamma}\right|^p\le C(1+p\gamma)^{p-1}(c^{p-1}-d^{p-1})\left(a_L^{1+p\gamma}-b_L^{1+p\gamma}\right),
\end{equation}
where $C=C(p)$
\end{lemma}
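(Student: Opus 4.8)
The statement of Lemma~\ref{G-L203} differs from Lemma~\ref{G-L202} only in that the factor $(a-b)^{p-1}$ on the right-hand side of \eqref{G-202} is replaced by $(c^{p-1}-d^{p-1})$, under the hypothesis $a-b=c-d$. So the natural strategy is to reduce \eqref{G-204} to \eqref{G-202} by comparing the two factors: if we can show
$$
(a-b)^{p-1}\le C(p)\,(c^{p-1}-d^{p-1})
$$
whenever $a-b=c-d$ (with $a>b$, hence $c>d$), then \eqref{G-204} follows immediately from Lemma~\ref{G-L202}. Here I am reading $(a-b)^{p-1}$ in the ``signed power'' sense $a-b=c-d>0$, so $(a-b)^{p-1}=(c-d)^{p-1}>0$ and $c^{p-1}-d^{p-1}$ is genuinely the difference of the (signed) $(p-1)$-th powers of $c$ and $d$.

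First I would note that it suffices to prove the elementary inequality
$$
(c-d)^{p-1}\le C(p)\,\bigl(c|c|^{p-2}-d|d|^{p-2}\bigr)\qquad\text{for all }c>d,
$$
with $C(p)$ depending only on $p\ge 2$. This is the well-known monotonicity/strong-monotonicity estimate for the map $t\mapsto t|t|^{p-2}$ on $\R$; in fact the sharp version is $(c-d)^{p-1}\le 2^{p-2}(c|c|^{p-2}-d|d|^{p-2})$, which can be checked by splitting into the cases $cd\ge 0$ and $cd<0$. When $c$ and $d$ have the same sign the inequality reduces, after normalizing, to convexity of $t\mapsto t^{p-1}$ on $[0,\infty)$; when they have opposite signs one writes $c>0>d$ and estimates $c^{p-1}-d|d|^{p-2}=c^{p-1}+|d|^{p-1}\ge \tfrac{1}{2^{p-2}}(c+|d|)^{p-1}=\tfrac1{2^{p-2}}(c-d)^{p-1}$, again using the power-mean inequality $(x+y)^{p-1}\le 2^{p-2}(x^{p-1}+y^{p-1})$ valid for $p-1\ge 1$. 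I would present this as a short standalone computation (or simply cite it as a standard fact).

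Having this, the proof of Lemma~\ref{G-L203} is two lines: by Lemma~\ref{G-L202},
$$
\left|a_L^{1+\gamma}-b_L^{1+\gamma}\right|^p\le C(1+p\gamma)^{p-1}(a-b)^{p-1}\left(a_L^{1+p\gamma}-b_L^{1+p\gamma}\right),
$$
and since $a-b=c-d>0$ and $a_L^{1+p\gamma}-b_L^{1+p\gamma}\ge 0$ (as $b<a$ forces $b_L^{1+p\gamma}\le a_L^{1+p\gamma}$, noted already in the proof of Lemma~\ref{G-L202}), we may replace $(a-b)^{p-1}=(c-d)^{p-1}$ by $C(p)(c^{p-1}-d^{p-1})$ using the elementary inequality above, absorbing the constant into $C$. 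The degenerate case $a=b$ (equivalently $c=d$) is trivial since both sides vanish.

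**Main obstacle.** There is no real obstacle here; the only point requiring care is the sign bookkeeping — making sure that the ``$(a-b)^{p-1}$'' appearing in Lemma~\ref{G-L202} is the positive quantity $(a-b)^{p-1}$ (legitimate since we assumed $b<a$ there) and that $c^{p-1}-d^{p-1}\ge 0$ for $c>d$, so that the chain of inequalities preserves direction. Once the elementary monotonicity estimate for $t\mapsto t|t|^{p-2}$ is in hand, the rest is immediate.
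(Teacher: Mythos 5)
Your proof is correct and takes essentially the same route as the paper: reduce Lemma~\ref{G-L203} to Lemma~\ref{G-L202} by establishing $(a-b)^{p-1}=(c-d)^{p-1}\le 2^{p-2}(c^{p-1}-d^{p-1})$, which is precisely the strong-monotonicity estimate for $t\mapsto t|t|^{p-2}$ that the paper quotes (in the form $(c^{p-1}-d^{p-1})(c-d)\ge 2^{-(p-2)}(c-d)^p$). The only cosmetic difference is that you sketch a direct sign-splitting proof of that elementary inequality where the paper simply invokes it.
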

\begin{proof}
Without loss of generality we may assume $a\ge b$ which implies $c\ge d$ due to $a-b=c-d$, and prove
\begin{equation}\label{G-205}
\left(a_L^{1+\gamma}-b_L^{1+\gamma}\right)^p\le C(1+p\gamma)^{p-1}(c^{p-1}-d^{p-1})\left(a_L^{1+p\gamma}-b_L^{1+p\gamma}\right).
\end{equation}
%
%From \cite[p.6]{IMS-2020},
Since $p\ge 2$, we have the inequality
\begin{eqnarray*}
\left(c^{p-1}-d^{p-1}\right)(c-d)&=&\left(c|c|^{p-2}-d|d|^{p-2}\right)(c-d)\ge\frac{1}{2^{p-2}}(c-d)^p,
\end{eqnarray*}
and thus
$$
c^{p-1}-d^{p-1} \ge \frac{1}{2^{p-2}}(c-d)^{p-1},
$$
which yields
$$
(a-b)^{p-1}=(c-d)^{p-1}\le 2^{p-2}\left(c^{p-1}-d^{p-1}\right).
$$
Hence (\ref{G-205}) follows from (\ref{G-203}).
\end{proof}
\begin{lemma}\label{G-L204}
For any $u, v\in D^{s,p}(\R^N)$  the following inequality holds:
\begin{equation}\label{G-206}
\left[(u-v)_L^{1+\gamma}\right]^p_{s,p}\le C(p)(1+p\gamma)^{p-1}\left\langle (-\Delta_p)^su-(-\Delta_p)^sv,(u-v)_L^{1+p\gamma}\right\rangle.
\end{equation}
\end{lemma}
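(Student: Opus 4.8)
The plan is to pointwise-integrate the inequality from Lemma~\ref{G-L203} against the Gagliardo kernel. Fix $u,v\in D^{s,p}(\R^N)$ and, for $x,y\in\R^N$, set
$$
a=u(x),\quad b=u(y),\quad c=u(x)-v(x),\quad d=u(y)-v(y).
$$
Then $a-b=c-d$, so Lemma~\ref{G-L203} applies and gives, for a.e.\ $(x,y)$,
$$
\left|u(x)_L^{1+\gamma}-u(y)_L^{1+\gamma}\right|^p\le C(p)(1+p\gamma)^{p-1}\Big((u(x)-v(x))^{p-1}-(u(y)-v(y))^{p-1}\Big)\left(u(x)_L^{1+p\gamma}-u(y)_L^{1+p\gamma}\right).
$$
Wait — this only produces $(u-v)_L^{\cdot}$ on neither side; to get $(u-v)_L^{1+\gamma}$ on the left I instead choose $a=u(x)-v(x)$, $b=u(y)-v(y)$ on the left-hand side, but then I need the right-hand side of Lemma~\ref{G-L203} to involve $u$ and $v$ separately. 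So the correct application is: in Lemma~\ref{G-L203} put
$$
a=u(x)-v(x),\quad b=u(y)-v(y),\quad c-d = a-b,
$$
and split $c-d = \big(u(x)-u(y)\big) - \big(v(x)-v(y)\big)$; but Lemma~\ref{G-L203} as stated wants a \emph{single} pair $c,d$ with $c^{p-1}-d^{p-1}$ appearing, not a difference of two such terms. The resolution, and the one genuinely nontrivial step, is the elementary monotonicity inequality for the map $t\mapsto t^{p-1}=t|t|^{p-2}$: with $c=u(x)-u(y)$ and $d=v(x)-v(y)$ one has $(c-d)=a-b$ and
$$
(a-b)^{p-1}=(c-d)^{p-1}\le 2^{p-2}\big(c^{p-1}-d^{p-1}\big)
$$
by exactly the computation already carried out inside the proof of Lemma~\ref{G-L203} (the inequality $(c^{p-1}-d^{p-1})(c-d)\ge 2^{2-p}(c-d)^p$, valid since $p\ge 2$). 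Substituting this bound into inequality~(\ref{G-203}) of Lemma~\ref{G-L202}, applied with $a=u(x)-v(x)$, $b=u(y)-v(y)$, yields the pointwise inequality
$$
\left|(u-v)(x)_L^{1+\gamma}-(u-v)(y)_L^{1+\gamma}\right|^p\le C(p)(1+p\gamma)^{p-1}\Big(\big(u(x)-u(y)\big)^{p-1}-\big(v(x)-v(y)\big)^{p-1}\Big)\left((u-v)(x)_L^{1+p\gamma}-(u-v)(y)_L^{1+p\gamma}\right),
$$
where I have used $\big((u-v)(x)-(u-v)(y)\big) = \big(u(x)-u(y)\big)-\big(v(x)-v(y)\big)$ and that the sign is handled by the WLOG reduction $a\ge b$ exactly as in Lemma~\ref{G-L203}.

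From here the argument is a routine integration. Divide both sides by $|x-y|^{N+sp}$ and integrate in $(x,y)$ over $\R^N\times\R^N$. The left-hand side becomes $\big[(u-v)_L^{1+\gamma}\big]^p_{s,p}$ by definition~(\ref{G-101}). On the right-hand side, the constant $C(p)(1+p\gamma)^{p-1}$ pulls out, and what remains is
$$
\int_{\R^N}\int_{\R^N}\frac{\Big(\big(u(x)-u(y)\big)^{p-1}-\big(v(x)-v(y)\big)^{p-1}\Big)\Big((u-v)(x)_L^{1+p\gamma}-(u-v)(y)_L^{1+p\gamma}\Big)}{|x-y|^{N+sp}}\,dx\,dy.
$$
Splitting the difference $\big(u(x)-u(y)\big)^{p-1}-\big(v(x)-v(y)\big)^{p-1}$ and recognizing, via Definition~\ref{G-D101} (the weak formulation of $(-\Delta_p)^s$), that testing $(-\Delta_p)^s u$ and $(-\Delta_p)^s v$ against the admissible test function $(u-v)_L^{1+p\gamma}$ produces precisely these two integrals, the right-hand side equals
$$
C(p)(1+p\gamma)^{p-1}\left\langle (-\Delta_p)^s u-(-\Delta_p)^s v,\ (u-v)_L^{1+p\gamma}\right\rangle,
$$
which is~(\ref{G-206}).

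Two technical points deserve care, and together they constitute the real work. First, one must check that $(u-v)_L^{1+p\gamma}\in D^{s,p}(\R^N)$ so that it is a legitimate test function in Definition~\ref{G-D101}: this follows because $t\mapsto t_L^{1+p\gamma}$ is Lipschitz (with constant $\le (1+p\gamma)L^{p\gamma}$) and vanishes at $0$, hence $[(u-v)_L^{1+p\gamma}]_{s,p}\le (1+p\gamma)L^{p\gamma}[u-v]_{s,p}<\infty$, and similarly it lies in $L^{p^*}(\R^N)$, so by~(\ref{G-103}) it belongs to $D^{s,p}(\R^N)$. Second, the pointwise inequality above was derived under a WLOG sign normalization at each $(x,y)$; one should note the inequality is symmetric under swapping $x\leftrightarrow y$ (both sides are), so the a.e.\ pointwise bound holds unconditionally and Tonelli's theorem justifies the integration since every integrand in sight, after taking absolute values, is nonnegative. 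The main obstacle is thus not any single estimate but the bookkeeping in the second paragraph: getting the correct substitution into Lemma~\ref{G-L202}/Lemma~\ref{G-L203} so that the left side carries $(u-v)_L^{1+\gamma}$ while the right side splits cleanly into the two $(-\Delta_p)^s$ pairings — once the monotonicity trick $(c-d)^{p-1}\le 2^{p-2}(c^{p-1}-d^{p-1})$ is in hand, everything else is formal.
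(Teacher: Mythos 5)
Your proof is correct and takes essentially the same route as the paper. One thing worth pointing out: the worry in your middle paragraph that Lemma~\ref{G-L203} "wants a single pair $c,d$, not a difference of two such terms" is unfounded — Lemma~\ref{G-L203} applies directly with $a=u(x)-v(x)$, $b=u(y)-v(y)$, $c=u(x)-u(y)$, $d=v(x)-v(y)$ (which satisfies $a-b=c-d$), and its conclusion already contains $c^{p-1}-d^{p-1}=(u(x)-u(y))^{p-1}-(v(x)-v(y))^{p-1}$, which is exactly the integrand of the pairing $\langle(-\Delta_p)^su-(-\Delta_p)^sv,\cdot\rangle$. Your detour back through Lemma~\ref{G-L202} plus the monotonicity inequality $(c-d)^{p-1}\le 2^{p-2}(c^{p-1}-d^{p-1})$ simply re-derives Lemma~\ref{G-L203} in the middle of the proof; the paper avoids this by applying Lemma~\ref{G-L203} as a black box and then integrating against $d\mu=|x-y|^{-(N+sp)}\,dx\,dy$, which is otherwise identical to what you do.
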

\begin{proof} We make use of inequality (\ref{G-204}) by setting  $a=u(x)-v(x),$ $b=u(y)-v(y),$ $c=u(x)-u(y),$ $d=v(x)-v(y),$ and obtain by integration
\begin{eqnarray*}
&&\left[(u-v)_L^{1+\gamma}\right]^p_{s,p}=\iint \left|(u(x)-v(x))_L^{1+\gamma}-(u(y)-v(y))_L^{1+\gamma}\right|^p\,d\mu\\
&&\le C(p,\gamma)\iint \left((u(x)-u(y))^{p-1}-(v(x)-v(y))^{p-1}\right)\\
&& \qquad\qquad \times\left((u(x)-v(x))_L^{1+p\gamma}-(u(y)-v(y))_L^{1+p\gamma}\right)d\mu\\
&& \le C(p,\gamma)\left\langle (-\Delta_p)^su-(-\Delta_p)^sv,(u-v)_L^{1+p\gamma}\right\rangle,
\end{eqnarray*}
where $ C(p,\gamma)= C(p)(1+p\gamma)^{p-1}$ and $  d\mu:= \frac{1}{|x-y|^{N+sp}}dxdy$.
\end{proof}
\begin{corollary}\label{G-C202}
If $u\in D^{s,p}(\R^N)$, then  $u_L^{1+\gamma}\in D^{s,p}(\R^N)$.
\end{corollary}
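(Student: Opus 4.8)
The plan is to apply Lemma~\ref{G-L204} with $v=0$ to obtain the estimate $\left[u_L^{1+\gamma}\right]_{s,p}^p \le C(p)(1+p\gamma)^{p-1}\,\langle (-\Delta_p)^s u,\, u_L^{1+p\gamma}\rangle$, noting that $0_L^{1+\gamma}=0$, and then to argue that the right-hand side is finite. Since $u\in D^{s,p}(\R^N)$, the functional $(-\Delta_p)^s u$ belongs to the dual space $D^{s,p}(\R^N)^*$, so the pairing on the right is finite \emph{provided} the test function $u_L^{1+p\gamma}$ lies in $D^{s,p}(\R^N)$. The key point, however, is that $u_L^{1+p\gamma}$ is a bounded, Lipschitz-type truncated power of $u$: since $\big|u_L^{1+p\gamma}(x)-u_L^{1+p\gamma}(y)\big|\le (1+p\gamma)L^{p\gamma}\,|u(x)-u(y)|$ (the function $t\mapsto t_L^{1+p\gamma}$ is Lipschitz with constant $(1+p\gamma)L^{p\gamma}$), one gets directly $\big[u_L^{1+p\gamma}\big]_{s,p}\le (1+p\gamma)L^{p\gamma}\,[u]_{s,p}<\infty$. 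Moreover $u_L^{1+p\gamma}\in L^{p^*}(\R^N)$ because $|u_L^{1+p\gamma}|\le \min\{|u|,L\}^{p\gamma}|u|\le L^{p\gamma}|u|$ and $u\in L^{p^*}(\R^N)$. Hence $u_L^{1+p\gamma}\in D^{s,p}(\R^N)$, the pairing is finite, and consequently $\big[u_L^{1+\gamma}\big]_{s,p}<\infty$.

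To conclude $u_L^{1+\gamma}\in D^{s,p}(\R^N)$ it remains to check that $u_L^{1+\gamma}\in L^{p^*}(\R^N)$ as well, which follows from the same pointwise bound $|u_L^{1+\gamma}|\le L^{\gamma}|u|$ together with $u\in L^{p^*}(\R^N)$. Combining finiteness of the Gagliardo seminorm with membership in $L^{p^*}(\R^N)$ and the identification (\ref{G-103}) of $D^{s,p}(\R^N)$ gives the claim.

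The only genuinely delicate point is the Lipschitz estimate for $t\mapsto t_L^{1+p\gamma}$, i.e.\ verifying $\big|a_L^{1+p\gamma}-b_L^{1+p\gamma}\big|\le (1+p\gamma)L^{p\gamma}|a-b|$ for all $a,b\in\R$; one handles this by the same short case analysis ($|a|,|b|<L$; one inside, one outside; both outside with equal or opposite signs) already used in the proof of Lemma~\ref{G-L202}, using that on $[-L,L]$ the derivative of $t\mapsto t|t|^{p\gamma}$ is bounded by $(1+p\gamma)L^{p\gamma}$ and that the map is constant in modulus outside $[-L,L]$. Alternatively, one may bypass Lemma~\ref{G-L204} entirely and deduce $u_L^{1+\gamma}\in D^{s,p}(\R^N)$ straight from this Lipschitz bound plus $u\in L^{p^*}(\R^N)$; either route is routine, so the proof is short.
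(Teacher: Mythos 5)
Your proof is correct, and you in fact offer two routes, one of which essentially collapses to the paper's argument. The paper applies inequality (\ref{G-202}) pointwise to get (\ref{G-207}), then uses the global Lipschitz property of $a\mapsto a_L^{1+p\gamma}$ to bound the factor $u_L^{1+p\gamma}(x)-u_L^{1+p\gamma}(y)$ by $C(L,p,\gamma)|u(x)-u(y)|$, yielding the clean pointwise bound $|u_L^{1+\gamma}(x)-u_L^{1+\gamma}(y)|^p\le C(L,p,\gamma)|u(x)-u(y)|^p$ before integrating. Your main route instead invokes the already-integrated form (Lemma~\ref{G-L204} with $v=0$) and then makes the right-hand side finite by observing, via the Lipschitz bound, that the test function $u_L^{1+p\gamma}$ lies in $D^{s,p}(\R^N)$. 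This is logically sound: the pairing is a nonnegative finite number, so the Gagliardo seminorm on the left is finite. It is, however, a detour, and one you correctly identify yourself: the Lipschitz property you establish for $t\mapsto t_L^{1+p\gamma}$ applies verbatim to $t\mapsto t_L^{1+\gamma}$ (constant $(1+\gamma)L^\gamma$), and that alone, together with $|u_L^{1+\gamma}|\le L^\gamma|u|\in L^{p^*}(\R^N)$, immediately gives $u_L^{1+\gamma}\in D^{s,p}(\R^N)$. That ``alternative'' is the cleanest proof, is even shorter than the paper's, and dispenses with both Lemma~\ref{G-L202} and Lemma~\ref{G-L204}. The only thing the paper's route buys in comparison is the explicit constant tied to (\ref{G-202}), which is not needed for the corollary itself (the dependence on $L$ means it will be discarded before any limiting argument anyway). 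Your Lipschitz estimate and the $L^{p^*}$ membership bound are both stated and justified correctly, so there is no gap.
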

\begin{proof}
We use inequality (\ref{G-202}) with $a=u(x)$ and $b=u(y)$ and get
\begin{equation}\label{G-207}
\left|u_L^{1+\gamma}(x)-u_L^{1+\gamma}(y)\right|^p\le C(1+p\gamma)^{p-1}(u(x)-u(y))^{p-1}\left(u_L^{1+p\gamma}(x)-u_L^{1+p\gamma}(y)\right).
\end{equation}
Taking note of the fact that the function $a\mapsto a_L^{1+p\gamma}:=a\min\{|a|,L\}^{p\gamma}$ is increasing and linear for $|a|\ge L$,  and  $a\mapsto a|a|^{p\gamma}$ is Lipschitz within the interval $[-L,L]$, the function $a\mapsto a_L^{1+p\gamma}$  is globally Lipschitz on $\R$, that is, we have
$$
\left| a_L^{1+p\gamma}- b_L^{1+p\gamma}\right|\le C(L,p,\gamma)|a-b|.
$$
With the last inequality from (\ref{G-207}) we obtain
\begin{equation}\label{G-208}
\left|u_L^{1+\gamma}(x)-u_L^{1+\gamma}(y)\right|^p\le C(L,p,\gamma)|u(x)-u(y)|^p.
\end{equation}
Integrating (\ref{G-208}) yields
$$
\left[u_L^{1+\gamma}\right]_{s,p}^p=\iint\left|u_L^{1+\gamma}(x)-u_L^{1+\gamma}(y)\right|^p\,d\mu\le C[u]_{s,p}^p,
$$
where $C=C(L,p,\gamma)$ and as above $d\mu=\frac{1}{|x-y|^{N+sp}}dxdy$, which completes the proof as clearly $u_L^{1+\gamma}$ is in $L^{p^*}(\R^N)$.
\end{proof}
%

%%%%%%%%%%%%%%%%%%%%%%%%%%%%%%%%%%%%%%%%%%%%%%%%%%%%
%%%%%%%%%%%%%Subsection22%%%%%%%%%%%%%%%%%%%%%%%%%%%%%%%%%%%
\subsection{Proof of Theorem \ref{G-T101}}\label{S22}
%%%%%%%%%%%%%%%%%%%%%%%%%%%%%%%%%%%%%%%%%%%%%%%%%%%%
Let $u\in D^{s,p}(\R^N)$ be a solution of the fractional $p$-Laplacian equation (\ref{G-106}) which we recall here
\begin{equation}\label{G-301}
u\in D^{s,p}(\R^N): (-\Delta_p)^s u=f(x,u) \quad\mbox{in }\R^N,
\end{equation}
where the nonlinear right-hand side $f: \R^N\times\R\to \R$ satisfies the growth (\ref{G-107}), that is,
\begin{equation}\label{G-302}
|f(x,t)|\le b_2(x)|t|^{p-1}+b_3(x).
\end{equation}
Under hypotheses (H1)--(H2), our goal is to prove the global $L^\infty$-estimate (\ref{G-108}).

Let $\alpha=\beta-(p-1)>1$, $L>1$. In view of Corollary \ref{G-C202} we have
$$
u_L^\alpha=u\min\{|u|,L\}^{\alpha-1}\in D^{s,p}(\R^N).
$$
Applying inequality (\ref{G-202}) with $\gamma=\frac{\alpha-1}{p}$, and $a=u(x)$, $b=u(y)$ we get
$$
\left|u_L^{\frac{p+\alpha-1}{p}}(x)-u_L^{\frac{p+\alpha-1}{p}}(y)\right|\le C(p)\alpha^{p-1}(u(x)-u(y))^{p-1}\left(u_L^\alpha(x)-u_L^\alpha(y)\right).
$$
Integrating the last inequality by $\R^N\times\R^N$, using the measure $d\mu=\frac{1}{|x-y|^{N+sp}}dxdy$ yields
$$
\left[u_L^{\frac{p+\alpha-1}{p}}\right]^p_{s,p}\le C(p)\alpha^{p-1}\left\langle (-\Delta_p)^su, u_L^\alpha\right\rangle=C(p)\alpha^{p-1}\int_{\R^N}f(x,u)u_L^\alpha\,dx.
$$
Since $u_L^{\frac{p+\alpha-1}{p}}\in D^{s,p}(\R^N)$, by the  fractional Sobolev-Gagliardo inequality we have
\begin{equation}\label{G-303}
\left\|u_L^{\frac{p+\alpha-1}{p}}\right\|_{p^*}^p\le C(s,p) \left[u_L^{\frac{p+\alpha-1}{p}}\right]^p_{s,p},
\end{equation}
hence from the last two inequalities along with (\ref{G-302}) we obtain
\begin{eqnarray}
\left\|u_L^{\frac{p+\alpha-1}{p}}\right\|_{p^*}^p&\le& C(s,p)\alpha^{p-1} \int_{\R^N}b_2(x)|u|^p\min\{|u|,L\}^{\alpha-1}\,dx\nonumber\\
&+&C(s,p)\alpha^{p-1}\int_{\R^N}b_3(x)|u_L^\alpha|\,dx.\label{G-304}
\end{eqnarray}
Next, let us estimate the integrals on the right-hand side of (\ref{G-304}). We recall that by hypotheses $sp<N$ and $N<sq$, and thus
$$
p< \frac{pq}{q-p}<p^*=\frac{Np}{N-sp}.
$$
Since $\left|u_L^{\frac{p+\alpha-1}{p}}\right|^p\le |u|^{p+\alpha-1}=|u|^\beta$, it follows that $\left|u_L^{\frac{p+\alpha-1}{p}}\right|\in L^p(\R^N)$. Moreover, by (\ref{G-303}) we have also $\left|u_L^{\frac{p+\alpha-1}{p}}\right|\in L^{p^*}(\R^N)$. Hence, by interpolation it follows that $\left|u_L^{\frac{p+\alpha-1}{p}}\right|\in L^{\frac{pq}{q-p}}(\R^N)$, which along with Young's inequality allows the following estimate for the first integral on the right-hand side of (\ref{G-304}):
\begin{eqnarray*}
I_1&=&\int_{\R^N}b_2(x)|u|^p\min\{|u|,L\}^{\alpha-1}\,dx=\int_{\R^N}b_2(x)\left|u_L^{\frac{p+\alpha-1}{p}}\right|^p\,dx\\
&\le& \|b_2\|_{\frac{q}{p}}\left\|u_L^{\frac{p+\alpha-1}{p}}\right\|^p_{\frac{pq}{q-p}}\\
&\le& \|b_2\|_{\frac{q}{p}}\left(\varepsilon\left\|u_L^{\frac{p+\alpha-1}{p}}\right\|_{p^*}^p+
C(\varepsilon)\left\|u_L^{\frac{p+\alpha-1}{p}}\right\|_{p}^p\right),
\end{eqnarray*}
for any $\varepsilon>0$, where $C(\varepsilon)=\varepsilon^{-\frac{N}{qs-N}}$. Taking, in particular,
$$
\varepsilon=\frac12\frac{1}{C(p,s)\alpha^{p-1}}\frac{1}{\|b_2\|_{\frac{q}{p}}},
$$
then from (\ref{G-304}) we get by taking into account that $\left|u_L^{\frac{p+\alpha-1}{p}}\right|^p\le |u|^{p+\alpha-1}$
\begin{eqnarray}
\left\|u_L^{\frac{p+\alpha-1}{p}}\right\|_{p^*}^p&\le & \tilde{C}(p,s)\alpha^{(p-1)\frac{qs}{qs-N}}\|b_2\|_{\frac{q}{p}}^{\frac{qs}{qs-N}}\int_{\R^N}|u|^{p+\alpha-1}dx\nonumber\\
&& +C(s,p)\alpha^{p-1}\int_{\R^N}b_3(x)|u_L^\alpha|\,dx \label{G-305}
\end{eqnarray}
Now let us also estimate the second integral on the right-hand side of (\ref{G-304}) or (\ref{G-305}) and recall $\alpha=\beta-(p-1)>1$, that is, $\beta=p+\alpha-1$.
\begin{eqnarray}\label{G-306}
I_2=\int_{\R^N}b_3(x)|u_L^\alpha|\,dx&\le &\int_{\R^N}b_3(x)|u|^{\alpha}\,dx\nonumber\\
&\le & \|b_3\|_{\frac{\beta}{p-1}}\left(\int_{\R^N}|u|^\beta dx\right)^{\frac{\alpha}{\beta}}
\end{eqnarray}
In view of
$$
\left(|u_L(x)|^{\frac{\beta}{p}}\right)^{p^{*}}\geq |u_L(x)|^{\frac{\beta}{p}p^{*}}=|u_L(x)|^{\frac{\beta N}{N-ps}}
$$
we conclude (note: $\beta=p+\alpha-1$)
$$
\left\|u_L^{\frac{p+\alpha-1}{p}}\right\|_{p^*}^p=\left\|u_L^{\frac{\beta}{p}}\right\|_{p^*}^p\ge\|u_L\|_{\frac{N\beta}{N-sp}}^\beta,
$$
hence from (\ref{G-305}) and (\ref{G-306}) we get
\begin{eqnarray*}
\|u_L\|_{\frac{N\beta}{N-sp}}^\beta&\le&\tilde{C}(p,s)\alpha^{(p-1)\frac{qs}{qs-N}}\|b_2\|_{\frac{q}{p}}^{\frac{qs}{qs-N}}\|u\|_\beta^\beta\\
&&+C(s,p)\alpha^{p-1} \|b_3\|_{\frac{\beta}{p-1}}\|u\|_\beta^\alpha.
\end{eqnarray*}
Letting $L\to\infty$, we finally get the following inequality:
\begin{equation}\label{G-307}
\|u\|_{\frac{N}{N-sp}\beta}\le (C \beta)^{\frac{\sigma}{\beta}}\max\left\{\|u\|_\beta, \|u\|_\beta^{1-\frac{p-1}{\beta}}\right\},
\end{equation}
where $C=C(p,q, s,N, \|b_2\|_{\frac{q}{p}},\|b_3\|)$ and $\sigma=\sigma(p,q,s,N)>1$.

Iterating inequality (\ref{G-307})by  taking $\beta_0=\beta,\, \, \beta_k=\chi^k\beta\, $, with $\chi=\frac{N}{N-sp}$, and keeping in mind that we may assume $C\beta \geq 1$,  we obtain
$$ \|u\|_{\chi^k\beta}\leq (C\chi\beta)^{\tilde{\sigma}}\max\left\{\|u\|_{\beta},|u|_{\beta}^{\theta(k)}\right\},\quad\quad k\in \mathbb{Z}^{+}
$$
where $\tilde{\sigma}= \frac{\sigma}{\beta}(1+\sum_{m=1}^{\infty} \frac{m}{\chi^m})$ and
$$\theta(k)=\prod_{i=0}^{k-1} R(i),\quad\quad R(i)=1\, \,  \mbox{ or }\, \, R(i)=1-\frac{p-1}{\chi^i \beta}, \,\, \mbox{ for }\,
\, 0\leq i\leq k-1.
$$
Note that
$$0< \theta_0=\theta_0(p,\beta, N) :=\prod_{i=0}^{\infty}(1-\frac{p-1}{\chi^i \beta})\leq \theta(k)\leq 1
$$
since  $\sum_{i=0}^{\infty} \frac{p-1}{\chi^i \beta}<\infty$. Hence

\begin{equation}\label{G-308}
\|u\|_r\leq C\max\{ \|u\|_{\beta}^{\theta_0}, \, \|u\|_{\beta}\},\quad \forall r \mbox{ with } p\leq r<\infty,
\end{equation}
and with $C$ as in the statement of Theorem \ref{G-T101}, which implies  (\ref{G-108}), completing the proof of the theorem.
\hfill $\Box$
\begin{remark}\label{G-R201}
The $L^\infty$-estimate given in Theorem \ref{G-T101} can be extended in a straightforward way to solutions $u\in D^{s,p}_0(\Omega)$ for any domain $\Omega\subset \R^N$. In this case  $u\in D^{s,p}_0(\Omega)$ is a solution of  the Dirichlet problem
$$
u\in D^{s,p}_0(\Omega): (-\Delta_p)^s u=f(x,u) \quad\mbox{in }\Omega,\quad u=0\quad\mbox{in }\R^N\setminus \Omega,
$$
where $D^{s,p}_0(\Omega)$ denotes the completion of smooth functions with compact support in $\Omega$ with respect to the $[.]_{s,p}$ norm, which due to the fractional Sobolev-Gagliardo inequality allows for the following characterization:
$$
D^{s,p}_0(\Omega)=\{ u\in L^{p^*}(\Omega): [u]_{s,p}<\infty, \quad u=0 \mbox{ in } \R^N\setminus \Omega\}.
$$
\end{remark}
%

%%%%%%%%%%%%%%%%%%%%%%%%%%%%%%%%%%%%%%%%%%%%%%%%%%%%
%%%%%%%%%%%%%SUBSECTION23%%%%%%%%%%%%%%%%%%%%%%%%%%%%%%%%%%%
\subsection{Application}\label{S23}
%%%%%%%%%%%%%%%%%%%%%%%%%%%%%%%%%%%%%%%%%%%%%%%%%%%%%%%%%
%%%%%%%%%%%%%%%%%%%%%%%%%%%%%%%%%%%%%%%%%%%%%%%%%%%%%%%%%%%
Consider the fractional $p$-Laplacian equation (\ref{101}), which we recall here
\begin{equation}\label{G-401}
(-\Delta_p)^su=g(x,u)\quad\mbox{in }\R^N,
\end{equation}
and assume the hypothesis (Hg), where  $a\in L^1(\R^N)\cap L^\infty(\R^N)$ with $\|a\|:=\|a\|_1+\|a\|_{\infty}.$
\begin{corollary}\label{G-C401}
Assume (Hg) and $a\in L^1(\R^N)\cap L^\infty(\R^N)$.  If $u\in D^{s,p}(\R^N)$ is a solution of (\ref{G-401}), then $u$ satisfies a $L^\infty$-estimate of the form
\begin{equation}\label{G-402}
\|u\|_{\infty}\le C\max\{ \|u\|_{p^*}^{\theta_0}, \, \|u\|_{p^*}\},
\end{equation}
where $C=C(p,q,N,s, \|a\|, [u]_{s,p})$ and $\theta_0\in (0,1]$.
\end{corollary}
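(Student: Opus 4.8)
The plan is to reduce Corollary \ref{G-C401} to a direct application of Theorem \ref{G-T101}. The main point is that the structural hypotheses (H1)--(H2) and the growth condition (\ref{G-107}) must be verified for the nonlinearity $f(x,t):=g(x,t)$ arising from (\ref{G-401}), with a suitable choice of the exponents $\beta$ and $q$. First I would record that, by Remark \ref{G-R101}(i), a solution $u\in D^{s,p}(\R^N)$ automatically lies in $L^{p^*}(\R^N)$ with $p^*>p$, so the hypothesis ``$u\in L^\beta(\R^N)$ for some $\beta>p$'' is satisfied; the natural choice is $\beta=p^*$ (or, if $\gamma$ forces it, some $\beta\in(p,p^*)$), so that the conclusion is phrased in terms of $\|u\|_{p^*}$.

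Next I would extract from (Hg) and (Ha) the bound $|g(x,t)|\le |a(x)|(1+|t|^{\gamma-1})$ with $1\le\gamma<p^*$. To fit the template (\ref{G-107}), namely $|f(x,t)|\le b_2(x)|t|^{p-1}+b_3(x)$, I would split according to whether $\gamma\le p$ or $\gamma>p$. If $\gamma\le p$, then $|t|^{\gamma-1}\le 1+|t|^{p-1}$, so one may take $b_2(x)=|a(x)|$ and $b_3(x)=2|a(x)|$. If $p<\gamma<p^*$, then by Young's inequality $|t|^{\gamma-1}\le C_\delta + \delta|t|^{p-1}\cdot|t|^{\gamma-p}$ is not directly of the right shape, so instead I would use $|t|^{\gamma-1}= |t|^{p-1}\,|t|^{\gamma-p}$ and absorb the factor $|t|^{\gamma-p}$ by noting that on the region $|u(x)|\le 1$ it is bounded, while on $\{|u(x)|>1\}$ one exploits that $u\in L^{p^*}$; more cleanly, one simply takes $b_2(x)=|a(x)|\,|u(x)|^{\gamma-p}$ — but that makes $b_2$ solution-dependent, which is still admissible because the constant $C$ in Corollary \ref{G-C401} is allowed to depend on $[u]_{s,p}$. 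Then $b_2\in L^{q/p}(\R^N)$ for a suitable $q>N/s$ follows from $a\in L^1\cap L^\infty$ and $u\in L^{p^*}$ via H\"older, with the norm $\|b_2\|_{q/p}$ controlled by $\|a\|$ and $[u]_{s,p}$ (through $\|u\|_{p^*}\le C(S_{s,p})[u]_{s,p}$).

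Similarly I would check (H2): $b_3(x)=2|a(x)|$ (or the analogous expression) lies in $L^\infty(\R^N)$ since $a\in L^\infty$, and in $L^{\beta/(p-1)}(\R^N)$ since $a\in L^1\cap L^\infty$ implies $a\in L^r$ for every $r\in[1,\infty]$; in particular $\beta/(p-1)\ge 1$, so this is immediate, and $\|b_3\|=\|b_3\|_{\beta/(p-1)}+\|b_3\|_\infty\le C\|a\|$. With (H1)--(H2) and (\ref{G-107}) in hand, Theorem \ref{G-T101} applies with $\beta=p^*$ (so $\|u\|_\beta=\|u\|_{p^*}$) and yields
\[
\|u\|_\infty\le C\max\left\{\|u\|_{p^*},\ \|u\|_{p^*}^{\theta_0}\right\}
\]
with $\theta_0=\theta_0(p,p^*,N,s)\in(0,1]$ and $C=C(p,q,N,s,\|b_2\|_{q/p},\|b_3\|)$. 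Re-expressing the dependence of $C$ through the bounds just derived — $\|b_2\|_{q/p}\le C(N,p,s)\|a\|\,[u]_{s,p}^{\gamma-p}$ and $\|b_3\|\le C\|a\|$ — gives $C=C(p,q,N,s,\|a\|,[u]_{s,p})$, which is exactly (\ref{G-402}).

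The main obstacle I anticipate is the supercritical-looking exponent range $p<\gamma<p^*$: one has to be careful that the factor $|t|^{\gamma-p}$ is genuinely controllable by the a priori information $u\in L^{p^*}$, i.e. that $\gamma-p$ times the relevant H\"older conjugate does not exceed $p^*$. Concretely, one needs to pick $q\in(N/s,\infty)$ so that $b_2=|a|\,|u|^{\gamma-p}$ lies in $L^{q/p}$: since $|a|\in L^\infty$, it suffices that $|u|^{\gamma-p}\in L^{q/p}$, i.e. $(\gamma-p)q/p\le p^*$; as $\gamma-p<p^*-p$ one can choose $q$ slightly above $N/s$ and verify $(\gamma-p)q/p<p^*$ holds — this is where the strict inequality $\gamma<p^*$ and $p\le N/s$ interplay and must be checked, but it is a routine exponent count rather than a deep difficulty. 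Everything else is bookkeeping: the only substantive input is Theorem \ref{G-T101} itself.
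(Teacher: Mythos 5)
Your proposal matches the paper's proof essentially verbatim: set $\beta = p^*$, take $b_2 = |a|\,|u|^{\gamma-p}$ and $b_3 = |a|$, verify (H1) by choosing $q > N/s$ with $(\gamma-p)q/p < p^*$, use H\"older with $a \in L^1 \cap L^\infty$ and $u \in L^{p^*}$ to bound $\|b_2\|_{q/p}$ by $\|a\|_r\,[u]_{s,p}^{\gamma-p}$, and then apply Theorem \ref{G-T101}. Your explicit case split for $\gamma \le p$ is in fact slightly more careful than the paper (which implicitly assumes $\gamma \ge p$ when writing $|u|^{\gamma-p}$); the only wrinkle is the closing paragraph, where ``since $|a|\in L^\infty$, it suffices that $|u|^{\gamma-p}\in L^{q/p}$'' is a slip — $u\in L^{p^*}$ does not give $u\in L^r$ for $r<p^*$ on $\R^N$, and the actual mechanism is the H\"older pairing against $\|a\|_r$ for an intermediate $r\in(1,\infty)$, which your main argument (and the paper) correctly uses.
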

\begin{proof}
For the right-hand side of (\ref{G-401}) we have the estimate
$$
|g(x,u(x))|\le |a(x)|+|a(x)||u(x)|^{\gamma-p}|u(x)|^{p-1}.
$$
Using the notation of Theorem \ref{G-T101}, we set
\begin{equation}\label{G-403}
b_2(x)=|a(x)||u(x)|^{\gamma-p},\quad\mbox{and } b_3(x)=|a(x)|.
\end{equation}
In order to apply Theorem \ref{G-T101}, we need to verify hypotheses (H1)--(H2) for $b_2$ and $b_3$ given by (\ref{G-403}).
First, note that
$$
\frac{N}{sp}(\gamma-p)<p^*=\frac{Np}{N-sp} \Longleftrightarrow \gamma< \frac{sp^2}{N-sp}+p=p^*.
$$
Thus there exists $q>\frac{N}{s}$ such that $\frac{q}{p}(\gamma-p)< p^*$,
which allows for the estimate
\begin{eqnarray*}
\int_{\R^N}|b_2(x)|^{\frac{q}{p}}\,dx&=& \int_{\R^N}|a(x)|^{\frac{q}{p}}|u(x)|^{(\gamma-p)\frac{q}{p} }\,dx\\
&\le & \|u\|_{p^*}^{(\gamma-p)\frac{q}{p} }\|a\|_r^{\frac{q}{p}},
\end{eqnarray*}
for some $r\in (1,\infty)$. Hence we get
\begin{equation}\label{G-404}
\|b_2\|_{\frac{q}{p}}\le \|a\|_r \|u\|_{p^*}^{\gamma-p}\le  \|a\|_r [u]_{s,p}^{\gamma-p},
\end{equation}
and also
\begin{equation}\label{G-405}
\|b_3\|:=\|b_3\|_{\frac{\beta}{p-1}}+\|b_3\|_\infty\le 2( \|a\|_1+\|a\|_\infty).
\end{equation}
Applying Theorem \ref{G-T101} with $\beta=p^*>p$ yields (\ref{G-402}).
\end{proof}
%
%%%%%%%%%%%%%%%%%%%%%%%%%%%%%%%%%%%%%%%%%%%%%%%%%%%%
%%%%%%%%%%%%% SECTION3%%%%%%%%%%%%%%%%%%%%%%%%%%%%%%%%%%%
\section{Decay estimate}\label{S3}
%%%%%%%%%%%%%%%%%%%%%%%%%%%%%%%%%%%%%%%%%%%%%%%%%%%%%%%%%
%%%%%%%%%%%%%%%%%%%%%%%%%%%%%%%%%%%%%%%%%%%%%%%%%%%%%%%%%%
%%%%%%%%%%%%%SUBSECTION31%%%%%%%%%%%%%%%%%%%%%%%%%%%%%%%%%%%
\subsection{Preliminary results}\label{S31}
%%%%%%%%%%%%%%%%%%%%%%%%%%%%%%%%%%%%%%%%%%%%%%%%%%%%%%%%%%%
%%%%%%%%%%%%%%%%%%%%%%%%%%%%%%%%%%%%%%%%%%%%%%%%%%%%
We first derive some properties of the coefficient $a$.
\begin{lemma}\label{L201}
If $a: \R^N\to\R$ satisfies (Ha),
then $a$ has the following properties:
\begin{itemize}
\item[(A1)] $a\in L^q(\R^N)$ for $1\le q\le \infty$.
\item[(A2)] There exists $C >0$ such that
\begin{equation}\label{201}
|x|^{\frac{N}{\sigma'}}\|a\|_{L^{\sigma}(\mathbb{R}^N\setminus B(0, |x|))}\le C,
\end{equation}
where $\frac{1}{\sigma'}+\frac{1}{\sigma}=1$ for any $\sigma$ with $1<\sigma<\infty$, in particular for  $\sigma> \frac{N}{sp}$. Here $B(0, |x|)$ is the open ball with radius $|x|$.
\end{itemize}
\end{lemma}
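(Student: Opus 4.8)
The plan is to verify the two properties (A1) and (A2) directly from the pointwise bound $|a(x)|\le c_a\, w(x)$ with $w(x)=(1+|x|^{N+\alpha})^{-1}$.

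For (A1), I would observe that $w\in L^\infty(\R^N)$ trivially (since $0<w\le 1$), and that $w\in L^1(\R^N)$ because $w(x)\sim |x|^{-(N+\alpha)}$ at infinity with $N+\alpha>N$, so the tail integral $\int_{|x|\ge 1}|x|^{-(N+\alpha)}\,dx$ converges (using polar coordinates, $\int_1^\infty r^{N-1}r^{-(N+\alpha)}\,dr=\int_1^\infty r^{-1-\alpha}\,dr<\infty$), while near the origin $w\le 1$ is integrable on a bounded set. Then $a\in L^1\cap L^\infty$ and the standard interpolation inequality $\|a\|_q\le \|a\|_1^{1/q}\|a\|_\infty^{1-1/q}$ gives $a\in L^q(\R^N)$ for all $1\le q\le\infty$.

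For (A2), fix $\sigma\in(1,\infty)$ and $x\in\R^N$ with, say, $|x|\ge 1$ (the case $|x|$ bounded is trivial since the left side is then bounded by a constant times $\|a\|_\sigma$, which is finite by (A1), times a bounded power of $|x|$). On $\R^N\setminus B(0,|x|)$ we have $|y|\ge|x|\ge 1$, hence $1+|y|^{N+\alpha}\ge |y|^{N+\alpha}$ and so $|a(y)|^\sigma\le c_a^\sigma\, |y|^{-\sigma(N+\alpha)}$. Integrating in polar coordinates,
\begin{equation*}
\|a\|_{L^\sigma(\R^N\setminus B(0,|x|))}^\sigma\le c_a^\sigma\,\omega_{N-1}\int_{|x|}^\infty r^{N-1-\sigma(N+\alpha)}\,dr.
\end{equation*}
The exponent satisfies $N-1-\sigma(N+\alpha)<N-1-\sigma N\le N-1-2N<-1$ (using $\sigma>1$ and $N\ge1$), actually more simply $N-\sigma(N+\alpha)<0$, so the integral equals $C\,|x|^{N-\sigma(N+\alpha)}$ for a constant $C=C(N,\sigma,\alpha)$. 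Taking $\sigma$-th roots yields $\|a\|_{L^\sigma(\R^N\setminus B(0,|x|))}\le C\,|x|^{N/\sigma-(N+\alpha)}$. Since $N/\sigma'=N-N/\sigma$, multiplying by $|x|^{N/\sigma'}$ gives $|x|^{N/\sigma'}\|a\|_{L^\sigma(\R^N\setminus B(0,|x|))}\le C\,|x|^{N/\sigma'+N/\sigma-(N+\alpha)}=C\,|x|^{N-(N+\alpha)}=C\,|x|^{-\alpha}\le C$ for $|x|\ge1$, which is (A2); the remark about $\sigma>N/(sp)$ is just noting this range is included.

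I do not anticipate a genuine obstacle here — both parts are elementary consequences of the decay rate $N+\alpha>N$ combined with a polar-coordinates estimate. The only point requiring a little care is handling the bounded-$|x|$ regime in (A2) separately (where one falls back on (A1) to know $\|a\|_\sigma<\infty$ and the prefactor $|x|^{N/\sigma'}$ stays bounded), and keeping track of the exponent bookkeeping $N/\sigma+N/\sigma'=N$ so that the powers of $|x|$ cancel to leave a negative (hence bounded-for-large-$|x|$) power. Since the bound in (A2) is claimed uniformly in $x$, one simply takes the max of the two regime constants.
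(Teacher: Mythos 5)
Your proof is correct and follows essentially the same route as the paper: $w\in L^1\cap L^\infty$ by a polar-coordinates tail estimate, hence $a\in L^q$ for all $q$, and then the same polar-coordinates computation for (A2) giving the decay $|x|^{-\alpha}$ after the exponents $N/\sigma+N/\sigma'=N$ cancel. The only cosmetic difference is in (A1): you invoke the standard interpolation inequality $\|a\|_q\le\|a\|_1^{1/q}\|a\|_\infty^{1-1/q}$, while the paper proves $L^1\cap L^\infty\subset L^q$ directly by splitting $\R^N$ into the level set $\{|a|\ge 1\}$ (finite measure) and its complement; these are equivalent. One tiny slip in your (A2): the chain $N-1-\sigma N\le N-1-2N$ tacitly assumes $\sigma\ge 2$, which is not given, but your immediately-following ``more simply $N-\sigma(N+\alpha)<0$'' (which holds for all $\sigma>1$) is the correct justification, so nothing is lost.
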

\begin{proof}
Clearly, $a\in L^\infty(\mathbb{R}^N)$. As for $a\in L^1(\mathbb{R}^N)$, it follows by the following  estimate ($|x|=r$):
\begin{eqnarray*}
\int_{\mathbb{R}^N} |a(x)|\, dx &\le& c \int_0^\infty \frac{1}{1+r^{N+\alpha}} r^{N-1}\,dr\\
&\le & c\,\int_0^1\frac{1}{1+r^{N+\alpha}} r^{N-1}\,dr+c\,\int_1^\infty r^{-\alpha-1}\,dr <\infty.
\end{eqnarray*}
As for the proof of  $a\in L^q(\R^N)$ for $1<q<\infty$, let $\Omega=\{x\in\R^N: |a(x)|\ge 1\}$.
From $a \in L^1(\mathbb{R}^N)$ we infer that the Lebesgue measure  $|\Omega|<\infty$, and thus we get the estimate
\begin{eqnarray*}
\int_{\mathbb{R}^N}|a|^q\,dx&=&\int_{\Omega}|a|^q\,dx+\int_{\mathbb{R}^N\setminus \Omega}|a|^q\,dx\le \|a\|_{\infty}^q |\Omega|+\int_{\mathbb{R}^N\setminus \Omega}|a|^q\,dx\\
&\le &\|a\|_{\infty}^q |\Omega|+\int_{\{x\in \mathbb{R}^N: 0\le |a(x)|<1\}}|a|^q\,dx\\
&\le &\|a\|_{\infty}^q |\Omega|+\|a\|_1<\infty,
\end{eqnarray*}
which proves (A1).  As for the proof of (\ref{201}) we only need to consider the case that $|x|\ge 1$, since for  $|x|\le 1$ inequality (\ref{201}) is trivially satisfied. Let $|x|\ge 1$ and let $c$ be a generic positive constant.
From (Ha) we have $|a(x)|\le c_a w(x)$ with $w$ as in (\ref{102})). Using spherical coordinates we can estimate as follows
\begin{eqnarray*}
|x|^{\frac{N}{\sigma'}\sigma}\|a\|^{\sigma}_{L^{\sigma}(\mathbb{R}^N\setminus B(0, |x|))}&=& c\,|x|^{\frac{N}{\sigma'}\sigma}\int_{\mathbb{R}^N\setminus B(0, |x|)}  w^\sigma\,dx\\
&\le& c\, |x|^{\frac{N}{\sigma'}\sigma}\int_{|x|}^\infty \Big(\frac{1}{1+r^{N+\alpha}}\Big)^{\sigma} r^{N-1}\,dr\\
&\le & c\,|x|^{\frac{N}{\sigma'}\sigma} \int_{|x|}^\infty r^{-N\sigma-\alpha\sigma+N-1}\,dr\\
&\le & c\,|x|^{\frac{N}{\sigma'}\sigma} |x|^{-N\sigma-\alpha\sigma+N} \\
&\le&  c\,|x|^{-\alpha\sigma}  \le c ,
\end{eqnarray*}
since $\frac{N}{\sigma'}\sigma-N\sigma+N=0$, which proves (\ref{201}).
\end{proof}
Let us recall the following properties of $(-\Delta_p)^s$.
\begin{lemma}\label{L202}
The fractional $p$-Laplacian $(-\Delta_p)^s$ has the following properties:
\begin{itemize}
\item[(i)] $(-\Delta_p)^s: D^{s,p}(\R^N)\to (D^{s,p}(\R^N))^*$ is a bounded, continuous, and strictly monotone operator.
\item[(ii)] $(-\Delta_p)^s$ possesses the following compactness property, called the  (S$_+$) property: if $(u_n)\subset  D^{s,p}(\R^N)$ with $u_n\rightharpoonup u$ (weakly convergent in $D^{s,p}(\R^N))$ and $\limsup_{n\to\infty}\langle (-\Delta_p)^su_n,u_n-u\rangle \le 0$, then $u_n\to u$ (strongly convergent) in $D^{s,p}(\R^N)$.
\end{itemize}
\end{lemma}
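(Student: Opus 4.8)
The plan is to read off all four assertions directly from the bilinear representation in Definition \ref{G-D101}. It is convenient to put $d\mu = |x-y|^{-(N+sp)}\,dx\,dy$ on $\R^N\times\R^N$ and to introduce the linear map $\iota\colon D^{s,p}(\R^N)\to L^p(\R^N\times\R^N,d\mu)$, $(\iota u)(x,y):=u(x)-u(y)$. By \eqref{G-101}, $\iota$ is a linear isometry, $\|\iota u\|_{L^p(d\mu)}=[u]_{s,p}$, and $\langle(-\Delta_p)^su,v\rangle=\int_{\R^N\times\R^N}|\iota u|^{p-2}(\iota u)\,(\iota v)\,d\mu$, which reduces everything to elementary facts about the measure space $(\R^N\times\R^N,d\mu)$.

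\emph{Boundedness and continuity in (i).} Hölder's inequality with exponents $p'$ and $p$ gives $|\langle(-\Delta_p)^su,v\rangle|\le\||\iota u|^{p-1}\|_{L^{p'}(d\mu)}\,\|\iota v\|_{L^p(d\mu)}=[u]_{s,p}^{p-1}[v]_{s,p}$, hence $\|(-\Delta_p)^su\|_{(D^{s,p}(\R^N))^*}\le[u]_{s,p}^{p-1}$, so bounded sets are mapped to bounded sets. For continuity I would invoke the fact that the Nemytskii (superposition) operator $w\mapsto|w|^{p-2}w$, whose growth exponent is exactly $p-1=p/p'$, maps $L^p(d\mu)$ continuously into $L^{p'}(d\mu)$ (Krasnoselskii's theorem on superposition operators, valid on the $\sigma$-finite space $(\R^N\times\R^N,d\mu)$); together with Hölder this yields $\|(-\Delta_p)^su-(-\Delta_p)^s\tilde u\|_{(D^{s,p}(\R^N))^*}\le\big\||\iota u|^{p-2}(\iota u)-|\iota\tilde u|^{p-2}(\iota\tilde u)\big\|_{L^{p'}(d\mu)}$, and the right-hand side tends to $0$ whenever $\tilde u\to u$ in $D^{s,p}(\R^N)$, since $\iota$ is continuous.

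\emph{Strict monotonicity in (i) and the (S$_+$) property in (ii).} Both hinge on the elementary inequality, available because $p\ge2$, that there is $c_p>0$ with $(|a|^{p-2}a-|b|^{p-2}b)(a-b)\ge c_p|a-b|^p$ for all $a,b\in\R$. Inserting $a=(\iota u)(x,y)$, $b=(\iota v)(x,y)$ and integrating against $d\mu$ produces
\begin{equation}\label{L202-key}
\langle(-\Delta_p)^su-(-\Delta_p)^sv,\,u-v\rangle\ \ge\ c_p\,[u-v]_{s,p}^p .
\end{equation}
When the left-hand side vanishes, $[u-v]_{s,p}=0$, so $u-v$ equals a constant a.e.; as $u-v\in L^{p^*}(\R^N)$ that constant must be $0$, which gives $u=v$ and hence strict monotonicity. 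For (ii), I would argue: if $u_n\rightharpoonup u$ in $D^{s,p}(\R^N)$ and $\limsup_n\langle(-\Delta_p)^su_n,u_n-u\rangle\le0$, then, since $(-\Delta_p)^su$ is a fixed element of $(D^{s,p}(\R^N))^*$, we have $\langle(-\Delta_p)^su,u_n-u\rangle\to0$; subtracting gives $\limsup_n\langle(-\Delta_p)^su_n-(-\Delta_p)^su,u_n-u\rangle\le0$, while \eqref{L202-key} bounds this quantity below by $c_p[u_n-u]_{s,p}^p\ge0$. Hence $[u_n-u]_{s,p}\to0$, i.e. $u_n\to u$ in $D^{s,p}(\R^N)$.

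\emph{Where the care is needed.} There is no genuine obstacle; the two points one must get right are the two standard ingredients invoked above — the continuity of the superposition operator $|w|^{p-2}w$ from $L^p(d\mu)$ to $L^{p'}(d\mu)$, and the lower bound $(|a|^{p-2}a-|b|^{p-2}b)(a-b)\ge c_p|a-b|^p$ for $p\ge2$ — together with the observation that a function in $D^{s,p}(\R^N)$ with vanishing Gagliardo seminorm is identically $0$, nonzero constants not belonging to $L^{p^*}(\R^N)$.
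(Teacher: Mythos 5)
Your argument is correct, and it reaches the conclusion by a genuinely different route from the paper. The paper declines to prove (i); for (ii) it writes out $\langle(-\Delta_p)^su_n-(-\Delta_p)^su,u_n-u\rangle$, applies H\"older twice, and uses the algebraic factorization $A^p+B^p-A^{p-1}B-AB^{p-1}=(A^{p-1}-B^{p-1})(A-B)$ with $A=[u_n]_{s,p}$, $B=[u]_{s,p}$ to conclude $[u_n]_{s,p}\to[u]_{s,p}$; it then upgrades weak convergence plus seminorm convergence to strong convergence via the Kadec--Klee property of the uniformly convex space $D^{s,p}(\R^N)$. You instead invoke the pointwise elementary inequality $(|a|^{p-2}a-|b|^{p-2}b)(a-b)\ge c_p|a-b|^p$, which is available because $p\ge2$ is assumed throughout (and is in fact the same inequality the paper itself uses in the proof of Lemma \ref{G-L203}); integrating against $d\mu$ immediately yields $\langle(-\Delta_p)^su_n-(-\Delta_p)^su,u_n-u\rangle\ge c_p[u_n-u]_{s,p}^p$, from which both strict monotonicity and the $(S_+)$ property drop out without any appeal to uniform convexity. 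Your route is more self-contained and delivers a quantitative lower bound (hence strict monotonicity at the same time, which the paper silently omits), while the paper's H\"older/Kadec--Klee argument has the virtue of working for all $p>1$ rather than only $p\ge2$ --- irrelevant here but the reason it is the textbook route. Your treatment of (i) via the isometry $\iota$, H\"older, and Krasnoselskii's continuity of the Nemytskii map $w\mapsto|w|^{p-2}w$ from $L^p(d\mu)$ to $L^{p'}(d\mu)$ is also correct and fills a gap the paper leaves open. One small remark: when you conclude from $[u-v]_{s,p}=0$ that $u=v$, it is worth stating explicitly that a function with vanishing Gagliardo seminorm is a.e.\ constant and that the only constant in $L^{p^*}(\R^N)$ is $0$ --- you do say this, and it is the right way to close that step since $D^{s,p}(\R^N)$ is normed by the seminorm alone.
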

\begin{proof}  We may omit the proof of property (i).

To prove the (S$_+$) property (ii), let $u_n\rightharpoonup u$ and $\limsup_{n\to\infty}\langle (-\Delta_p)^su_n,u_n-u\rangle \le 0$. Then with
$\varphi_n(x,y)=\frac{u_n(x)-u_n(y)}{|x-y|^{(N+sp)\frac{1}{p}}}$, $\psi(x,y)=\frac{u(x)-u(y)}{|x-y|^{(N+sp)\frac{1}{p}}}$
and by applying H\"older's inequality we get
\begin{eqnarray*}
&&\langle (-\Delta_p)^s u_n-(-\Delta_p)^s u, u_n-u\rangle\\
&=& \iint  \left(|\varphi_n(x,y)|^{p-2}\varphi_n(x,y)-|\psi(x,y)|^{p-2}\psi(x,y)\right)(\varphi_n(x,y)-\psi(x,y)) \\
&=&\iint   |\varphi_n(x,y)|^p + |\psi(x,y)|^{p}-|\varphi_n(x,y)|^{p-2}\varphi_n(x,y)\psi(x,y)\\
&&- \iint |\psi(x,y)|^{p-2}\varphi_n(x,y)\psi(x,y) \\
&\ge& [u_n]_{s,p}^p+[u]_{s,p}^p -[u_n]_{s,p}^{p-1}[u]_{s,p}-[u]_{s,p}^{p-1}[u_n]_{s,p}\\
&\ge & \left([u_n]_{s,p}^{p-1}-[u]_{s,p}^{p-1}\right)\left( [u_n]_{s,p}-[u]_{s,p}\right)\ge 0.
\end{eqnarray*}
The assumptions $u_n\rightharpoonup u$ and $\limsup_{n\to\infty}\langle (-\Delta_p)^su_n,u_n-u\rangle \le 0$ imply
$$
\limsup_{n\to\infty}\langle (-\Delta_p)^su_n-(-\Delta_p)^su,u_n-u\rangle \le 0,
$$
which together with the preceding inequality yields
$$
\lim_{n\to\infty}\left([u_n]_{s,p}^{p-1}-[u]_{s,p}^{p-1}\right)\left( [u_n]_{s,p}-[u]_{s,p}\right)=0,
$$
hence it follows $\lim_{n\to\infty}[u_n]_{s,p}=[u]_{s,p}$. Since $(D^{s,p}(\R^N), [\cdot]_{s,p})$ is a uniformly convex Banach space, the weak convergence $u_n\rightharpoonup u$ along with $ [u_n]_{s,p}\to [u]_{s,p}$ imply the  strong convergence, that is $[u_n-u]_{s,p}\to 0$ due to the Kadec-Klee property of  uniformly convex Banach spaces. This completes part (ii).
\end{proof}
\smallskip

\noindent Let $L^0(\mathbb{R}^N)$ denote the space of all real-valued measurable functions defined on $\mathbb{R}^N$, and Let $w: \mathbb{R}^N\to \mathbb{R}$ as in assumption (Ha), that is,
\begin{equation}\label{202}
w(x)=\frac{1}{1+|x|^{N+\alpha}},\quad\mbox{with }\alpha>0.
\end{equation}
By means of the function $w$ we introduce the following weighted Lebesgue space $L^q(\mathbb{R}^N, w)$ with $q\in (1, \infty)$ defined by %
$$
L^q(\mathbb{R}^N, w)=\Big\{u\in L^0(\mathbb{R}^N): \int_{\mathbb{R}^N} w|u|^q\,dx<\infty\Big\}
$$
which is a separable and reflexive Banach space under the norm
$$
\|u\|_{q,w}=\Big(\int_{\mathbb{R}^N} w|u|^q\,dx\Big)^{\frac{1}{q}}.
$$
Recall that  $\|\cdot\|_r$ stands for the  norm in $L^r(\mathbb{R}^N)$. We prove the following embedding result.
\begin{lemma}\label{L203}
The embedding $D^{s,p}(\R^N)\hookrightarrow  L^q(\mathbb{R}^N, w)$ is continuous for $1\le q\le p^*$, and $D^{s,p}(\R^N)\hookrightarrow\hookrightarrow L^q(\mathbb{R}^N, w)$
is compact for $1\le q<p^*$.
\end{lemma}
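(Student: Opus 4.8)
The plan is to prove the continuous embedding for $q = p^*$ first, since for $1 \le q < p^*$ we will interpolate. For $q = p^*$ observe that $w \le 1$, so $\int_{\R^N} w\,|u|^{p^*}\,dx \le \int_{\R^N} |u|^{p^*}\,dx = \|u\|_{p^*}^{p^*}$, and by the fractional Sobolev--Gagliardo inequality (\ref{G-102}) (equivalently by (\ref{G-103})) this is bounded by $C\,[u]_{s,p}^{p^*}$; hence $\|u\|_{p^*,w} \le C\,[u]_{s,p}$. For $1 \le q < p^*$, I would write $q$ as an interpolation exponent but against the weight: since $w$ is integrable (indeed $w \in L^1(\R^N)$ by the same spherical-coordinates estimate used in Lemma~\ref{L201}, as $N + \alpha > N$), one has by H\"older with exponents $\frac{p^*}{q}$ and $\frac{p^*}{p^* - q}$,
$$
\int_{\R^N} w\,|u|^q\,dx = \int_{\R^N} w^{1 - \frac{q}{p^*}}\,\big(w^{\frac{q}{p^*}}|u|^q\big)\,dx \le \Big(\int_{\R^N} w\,dx\Big)^{1 - \frac{q}{p^*}} \Big(\int_{\R^N} w\,|u|^{p^*}\,dx\Big)^{\frac{q}{p^*}},
$$
so $\|u\|_{q,w} \le \|w\|_1^{\frac{1}{q} - \frac{1}{p^*}}\,\|u\|_{p^*,w}^{} \le C\,[u]_{s,p}$, giving continuity for all $1 \le q \le p^*$.

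For the compact embedding $D^{s,p}(\R^N) \hookrightarrow\hookrightarrow L^q(\R^N, w)$ with $1 \le q < p^*$, I would take a bounded sequence $(u_n) \subset D^{s,p}(\R^N)$, so that after passing to a subsequence $u_n \rightharpoonup u$ weakly in $D^{s,p}(\R^N)$ (using reflexivity). By the compact fractional Sobolev embedding on bounded domains, for each fixed $R > 0$ we have $u_n \to u$ strongly in $L^q(B_R)$ (along a diagonal subsequence over $R \in \N$), hence $\int_{B_R} w\,|u_n - u|^q\,dx \to 0$. For the tail, split
$$
\int_{\R^N \setminus B_R} w\,|u_n - u|^q\,dx \le \Big(\sup_{|x| \ge R} w(x)^{1 - \frac{q}{p^*}}\Big)\int_{\R^N \setminus B_R} w^{\frac{q}{p^*}}\,|u_n - u|^q\,dx,
$$
and bound the last integral by H\"older exactly as above by $\|w\|_1^{1 - \frac{q}{p^*}}\,\|u_n - u\|_{p^*}^q \le C$, where the constant is uniform in $n$ because $(u_n)$ is bounded in $D^{s,p}(\R^N) \hookrightarrow L^{p^*}(\R^N)$. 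Since $\sup_{|x| \ge R} w(x) \to 0$ as $R \to \infty$ (indeed $w(x) \le |x|^{-(N+\alpha)}$), the tail is small uniformly in $n$; combining with the convergence on $B_R$ gives $u_n \to u$ in $L^q(\R^N, w)$.

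The main obstacle — really the only delicate point — is the tail estimate: one must gain uniform smallness of $\int_{\R^N \setminus B_R} w\,|u_n - u|^q\,dx$ in $n$, and this is exactly what the decay of $w$ at infinity buys us, provided $q$ is strictly below $p^*$ so that there is a positive power of $w$ left over after pairing $w^{q/p^*}$ with the $L^{p^*}$-bounded factor $|u_n - u|^q$. At the critical exponent $q = p^*$ this slack disappears and compactness genuinely fails, which is why the statement restricts to $q < p^*$ there. Everything else (the local compactness, the weak limit, the diagonal argument) is standard.
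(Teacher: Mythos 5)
Your overall strategy for both parts matches the paper's: prove continuity by H\"older against the weight, then for compactness extract a weak limit, use compact fractional Sobolev embeddings on balls $B_R$ to get local strong convergence, and control the tail on $\R^N\setminus B_R$ uniformly in $n$ using the decay of $w$. The continuity argument is correct (your version routes through the $q=p^*$ case and produces the constant $\|w\|_1^{1/q-1/p^*}$, whereas the paper's H\"older produces $\|w\|_{p^*/(p^*-q)}^{1/q}$; both are valid).

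There is, however, a genuine error in the tail estimate: you have the exponents in the splitting $w = w^{1-q/p^*}\cdot w^{q/p^*}$ the wrong way round. After you pull out $\sup_{|x|\ge R} w^{1-q/p^*}$, the integral that remains is $\int_{\R^N\setminus B_R} w^{q/p^*}|u_n-u|^q\,dx$, and applying H\"older to it with the forced conjugate pair $\bigl(p^*/q,\, p^*/(p^*-q)\bigr)$ yields
$\bigl(\int w^{q/(p^*-q)}\bigr)^{(p^*-q)/p^*}\,\|u_n-u\|_{p^*}^q$,
not $\|w\|_1^{1-q/p^*}\|u_n-u\|_{p^*}^q$ as you claim. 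The quantity $\int w^{q/(p^*-q)}\,dx$ can be infinite: since $w(x)\sim|x|^{-(N+\alpha)}$, integrability at infinity requires $(N+\alpha)q/(p^*-q)>N$, which fails for $q$ near $1$ and $\alpha$ small (e.g.\ $N=3$, $p=2$, $s=0.9$, $\alpha=0.1$, $q=1$ gives $p^*=5$ and exponent $0.775<3$). So the ``last integral'' need not be uniformly bounded, and the step as written does not close.

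The fix is small and does not change the structure. Either pull out $\sup_{|x|\ge R}w^{q/p^*}$ instead, so the remaining integral $\int w^{1-q/p^*}|u_n-u|^q\,dx$ is correctly bounded by H\"older by $\|w\|_1^{1-q/p^*}\|u_n-u\|_{p^*}^q$ (the exponent on $w$ becomes $(1-q/p^*)\cdot\frac{p^*}{p^*-q}=1$); or, more directly, drop the sup altogether and apply H\"older on $\int_{\R^N\setminus B_R}w|u_n-u|^q\,dx$ to obtain $\|w\|_{L^{p^*/(p^*-q)}(\R^N\setminus B_R)}\|u_n-u\|_{p^*}^q$, then use the spherical-coordinates decay of the tail of $w$ — which is exactly what the paper does. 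Either way, the strict inequality $q<p^*$ is what leaves a positive power of $w$ to produce the uniform smallness, as you correctly observe.
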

\begin{proof}  First we note that in view of Lemma \ref{201}, $w\in L^r(\mathbb{R}^N)$ for $1\le r\le \infty$.
Let $u\in D^{s,p}(\R^N)$, then $u\in L^{p^*}(\mathbb{R}^N)$, and thus for some positive constant we get for $1\le q< p^*$
$$
\int_{\mathbb{R}^N} w|u|^q\,dx\le \|w\|_{\frac{p^*}{p^*-q}}\|u\|_{p^*}^q\le c \|w\|_{\frac{p^*}{p^*-q}}[u]_{s,p}^q,
$$
that is,
$$
\|u\|_{q,w}\le c \|w\|_{\frac{p^*}{p^*-q}}^{\frac{1}{q}}[u]_{s,p},
$$
which shows that $i_w: D^{s,p}(\R^N)\to L^q(\mathbb{R}^N, w)$ is linear and continuous for $1\le q< p^*$. If $q=p^*$, then due to $w\in L^\infty(\R^N)$,  the inequality
$$
\|u\|_{p^*,w}\le c  [u]_{s,p}
$$
is trivially satisfied, which proves the first part.

As for the compact embedding for $1\le q < p^*$, let $(u_n)\subset D^{s,p}(\R^N)$ be bounded. Since $D^{s,p}(\R^N)$ is reflexive, there exists a weakly convergent subsequence still denoted by $(u_n)$, that is, $u_n\rightharpoonup u$ in $D^{s,p}(\R^N)$, which due to $D^{s,p}(\R^N)\hookrightarrow L^{p^*}(\R^N)$ yields $u_n\rightharpoonup u$ in $L^{p^*}(\R^N)$.  On the other hand $(u_n)\subset W^{s,p}(B_R)$ is bounded for any $R$, where   $B_R=B(0,R)$ is the open ball with radius $R$, and $W^{s,p}(B_R)$ is the fractional Sobolev space defined by
$$
W^{s,p}(B_R)=\left\{u\in L^p(B_R): \iint_{B_R\times B_R}\frac{|u(x)-u(y)|^p}{|x-y|^{N+sp}}\,dxdy<\infty\right\},
$$
endowed with the norm
$$
\|u\|_{W^{s,p}(B_R)}=\left(\|u\|^p_{L^p(B_R)}+[u]_{s,p,B_R}^p\right)^{\frac{1}{p}},
$$
where
$$
[u]_{s,p,B_R}^p=\iint_{B_R\times B_R}\frac{|u(x)-u(y)|^p}{|x-y|^{N+sp}}\,dxdy.
$$
The space $W^{s,p}(B_R)$ is a uniformly convex Banach space and thus reflexive. Also we have  $L^{p^*}(\R^N)\hookrightarrow L^{p^*} (B_R)\hookrightarrow L^{q}(B_R)$, hence  $u_n\rightharpoonup u$ in $L^{q}(B_R)$. Since $(u_n)$ is bounded in $W^{s,p}(B_R)$ and $W^{s,p}(B_R)\hookrightarrow\hookrightarrow L^q(B_R)$ is compactly embedded for $1\le q< p^*$ (see, e.g., \cite[Theorem 4.54]{Demengel-2012}), there is a subsequence again denoted by $(u_n)$ such that $u_n\to v$ in $L^q(B_R)$ which by passing again to a subsequence yields $u_n(x)\to v(x)$ for a.a. $x\in B_R$. The weak convergence  $u_n\rightharpoonup u$ in $L^{q}(B_R)$ along with  $u_n(x)\to v(x)$ for a.a. $x\in B_R$ implies that $u=v$ in $B_R$. Let us  show that in fact $u_n\to u$ in $L^q(\R^N, w)$.

For $\varepsilon>0$  arbitrarily be given and any $R>0$, we consider
\begin{equation}\label{203}
\|u_n-u\|_{q,w}^q=\int_{\mathbb{R}^N\setminus B_R} w|u_n-u|^q\,dx+\int_{B_R} w|u_n-u|^q\,dx.
\end{equation}
Since $(u_n)$ is bounded in $D^{s,p}(\R^N)$, that is $[u_n]_{s,p}\le c$,   it is also  bounded in $L^{p^*}(\mathbb{R}^N)$, and thus with some generic constant $c$ independent of $n$ and $R$,  we can estimate the first integral on the right-hand side of (\ref{203}) as follows:
\begin{eqnarray*}
&&\int_{\mathbb{R}^N\setminus B_R} w|u_n-u|^q\,dx \le  c\int_{\mathbb{R}^N\setminus B_R} w\big(|u_n|^q+|u|^q\big)\,dx\\
&&\le  c \|w\|_{L^{\frac{p^*}{p^*-q}}(\mathbb{R}^N\setminus B_R)}\Big(\|u_n\|^q_{L^{p^*}(\mathbb{R}^N\setminus B_R)}+\|u\|^q_{L^{p^*}(\mathbb{R}^N\setminus B_R)}\Big)\\
&&\le  c \|w\|_{L^{\frac{p^*}{p^*-q}}(\mathbb{R}^N\setminus B_R)}\Big(\|u_n\|^q_{p^*}+\|u\|^q_{p^*}\Big),
\end{eqnarray*}
which, together with the estimate $\|u_n\|_{p^*}\le c[u_n]_{s,p}\le c$, yields
\begin{equation}\label{204}
\int_{\mathbb{R}^N\setminus B_R} w|u_n-u|^q\,dx\le c\,\|w\|_{L^{\frac{p^*}{p^*-q}}(\mathbb{R}^N\setminus B_R)}.
\end{equation}
The right-hand side of (\ref{204}) can be further estimated as
\begin{equation}\label{205}
\|w\|_{L^{\frac{p^*}{p^*-q}}(\mathbb{R}^N\setminus B(0,R))}^{\frac{p^*}{p^*-q}}\le c\int_R^\infty\Big(\frac{1}{1+\varrho^{N+\alpha}}\Big)^{\frac{p^*}{p^*-q}}\varrho^{N-1}\,d\varrho\le c R^{-(N+\alpha){\frac{p^*}{p^*-q}}+N},
\end{equation}
since $-(N+\alpha){\frac{p^*}{p^*-q}}+N<0$. It follows from (\ref{204}) and (\ref{205}) the existence of $R>0$ sufficiently large such that
\begin{equation}\label{206}
\int_{\mathbb{R}^N\setminus B_R} w|u_n-u|^q\,dx< \frac{\varepsilon}{2}, ~~\forall\  n\in \mathbb{N}.
\end{equation}
In view of $u_n\to u$ (strongly) in $ L^q(B_R)$ and taking into account that $w\in L^\infty(\mathbb{R}^N)$ one gets
\begin{equation}\label{207}
\int_{B(0,R)} w|u_n-u|^q\,dx< \frac{\varepsilon}{2} \ \mbox{ for $n$ sufficiently large}.
\end{equation}
The estimates (\ref{206}) and (\ref{207}) complete the proof.
\end{proof}
%
%%%%%%%%%%%%%SUBSECTION32%%%%%%%%%%%%%%%%%%%%%%%%%%%%%%%%%%%
\subsection{Decay estimate via Wolff potential}\label{S32}
%%%%%%%%%%%%%%%%%%%%%%%%%%%%%%%%%%%%%%%%%%%%%%%%%%%%%%%%%%%
%%%%%%%%%%%%%%%%%%%%%%%%%%%%%%%%%%%%%%%%%%%%%%%%%%%%%%%%%%%%
In this section we are going to prove a decay estimate for the unique solution of the fractional $p$-Laplacian equation
\begin{equation}\label{401}
(-\Delta _p)^s u(x)=|a(x)| \quad\mbox{in }\R^N.
\end{equation}
Assuming (Ha), then $a$ satisfies (A1), and thus,  in particular, $a\in L^{{p*}'}(\R^N)\subset (D^{s,p}(\R^N))^*$. Hence by the continuity and strict monotonicity of the fractional $p$-Laplacian, we get the existence of a unique solution $u\in D^{s,p}(\R^N)$ of (\ref{401}). From \cite{MS-2016} we deduce that the solution is loc-H\"older, that is, it belongs to $C_{\mathrm{loc}}^{0,\gamma}(\R^N)$, and thus, in particular continuous. Moreover,  $u\ge 0$, and if $u \not\equiv 0$ then $u(x)>0$ in $\R^N$ and $\inf_{\R^N}u(x)=0$, see, e.g., \cite{DKP-2014}.

The goal of this subsection is to prove the following pointwise upper bound for the solution of (\ref{401}).
\begin{theorem}\label{T401}
Assume hypothesis (Ha). The positive, continuous solution $u$ of the fractional $p$-Laplacian equation (\ref{401}) has the following  pointwise upper bound:
\begin{equation}\label{402}
u(x)\le C |x|^{-\frac{N-sp}{p-1}}, \ \forall\ x: |x|\ge 1,
\end{equation}
where $C=C(N,p,s)$.
\end{theorem}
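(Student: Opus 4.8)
The plan is to apply, at a point $x_0$ far from the origin, the nonlinear Wolff‑potential pointwise estimate of Section~\ref{S3} (valid for the solution $u$ of \eqref{401}, which is at once a super‑ and a subsolution) and then to control the three resulting quantities by the decay of $|a|$ from Lemma~\ref{L201}, the global $L^\infty$‑bound (Corollary~\ref{G-C401}, applicable since $|a|$ fulfils (Hg) and $a\in L^1\cap L^\infty$ by Lemma~\ref{L201}), and the embedding $u\in L^{p^*}(\R^N)$. Write $\gamma:=\tfrac{N-sp}{p-1}$ throughout; recall that $u\ge0$ is continuous, bounded, and in $L^{p^*}(\R^N)$. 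Since $u(x)\le\|u\|_\infty\le\|u\|_\infty\rho_0^{\gamma}\,|x|^{-\gamma}$ on any fixed shell $\{1\le|x|\le\rho_0\}$, it suffices to prove \eqref{402} for $|x|$ large, and bounds obtained on $\{|x|\ge\rho_0\}$ can always be extended to $\{|x|\ge1\}$ at the cost of a fixed additive constant in the $|x|^{-\gamma}$‑coefficient.

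Concretely, I would fix $x_0$ with $\rho:=|x_0|$ large and a radius $R>0$ and feed $u$ into
$$
u(x_0)\le C\Bigl[\,W^{|a|}_{s,p}(x_0,2R)+\Bigl(|B_R(x_0)|^{-1}\!\!\int_{B_R(x_0)}u^{p-1}\,dy\Bigr)^{\!\frac1{p-1}}+\mathrm{Tail}(u;x_0,R)\Bigr],
$$
$C=C(N,p,s)$, with $W^{|a|}_{s,p}(x_0,r)=\int_0^r\bigl(r'^{\,sp-N}\!\int_{B_{r'}(x_0)}|a|\,dy\bigr)^{\frac1{p-1}}\tfrac{dr'}{r'}$ and $\mathrm{Tail}(u;x_0,r)=\bigl(r^{sp}\!\int_{\R^N\setminus B_r(x_0)}\tfrac{u(y)^{p-1}}{|y-x_0|^{N+sp}}\,dy\bigr)^{\frac1{p-1}}$. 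For the data term I would split the radial integral at $r'=\rho/2$: for $r'\le\rho/2$ the ball $B_{r'}(x_0)$ avoids the origin, so $|a|\le c_a(\rho/2)^{-(N+\alpha)}$ there by \eqref{102}, while for $r'>\rho/2$ one uses $\int_{B_{r'}(x_0)}|a|\le\|a\|_1$; since $sp>0$ and $N-sp>0$ this gives, for every $R$,
$$
W^{|a|}_{s,p}(x_0,2R)\le c\,\rho^{-\gamma-\frac{\alpha}{p-1}}+c\,\|a\|_1^{\frac1{p-1}}\rho^{-\gamma}\le C_1\,\rho^{-\gamma}.
$$
Thus the data term already decays with the sharp rate (plus a margin $\alpha/(p-1)$), and the whole difficulty is to tame the two $u$‑dependent terms.

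The first move is a crude polynomial decay: take $R=\rho$ and bound both $\bigl(|B_\rho(x_0)|^{-1}\int_{B_\rho(x_0)}u^{p-1}\bigr)^{1/(p-1)}$ and $\mathrm{Tail}(u;x_0,\rho)$ by Hölder's inequality, separating $\{|y|<1\}$ (where $u\le\|u\|_\infty$) from $\{|y|\ge1\}$ (where one uses $u\in L^{p^*}$); each is $\le c(\|u\|_{p^*}\rho^{-(N-sp)/p}+\|u\|_\infty\rho^{-N/(p-1)})$, so with the Wolff bound $u(x_0)\le C'\rho^{-\mu_0}$ with $\mu_0:=\tfrac{N-sp}{p}>0$, i.e.\ $u(x)\le A_0|x|^{-\mu_0}$ on $\{|x|\ge1\}$. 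The decisive step is then a bootstrap with a \emph{large} radius: take $R=K\rho$, $K\ge2$ a constant. If $u(y)\le A|y|^{-\delta}$ for $|y|\ge1$ (and $u\le\|u\|_\infty$ on $B_1$) with $0<\delta\le\gamma$, then using $B_{K\rho}(x_0)\subset B_{(K+1)\rho}(0)$, $\R^N\setminus B_{K\rho}(x_0)\subset\{|y|>(K-1)\rho\}$ and $\delta(p-1)\le N-sp<N$, elementary integral estimates give
$$
\Bigl(|B_{K\rho}(x_0)|^{-1}\!\!\int_{B_{K\rho}(x_0)}u^{p-1}\Bigr)^{\!\frac1{p-1}}\le c\,\|u\|_\infty K^{-\frac{N}{p-1}}\rho^{-\frac{N}{p-1}}+c\,A\,K^{-\delta}\rho^{-\delta},\qquad \mathrm{Tail}(u;x_0,K\rho)\le c\,A\,(K-1)^{-\delta}\rho^{-\delta},
$$
$c=c(N,p,s)$. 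Inserting these and $C_1\rho^{-\gamma}$ into the pointwise estimate and using $\tfrac{N}{p-1}>\gamma$, one gets for $|x_0|=\rho$ large
$$
u(x_0)\le\bigl(C_1+c\,\|u\|_\infty K^{-\frac{N}{p-1}}\bigr)\rho^{-\gamma}+\eta(K)\,A\,\rho^{-\delta},\qquad \eta(K):=C\,c\,\bigl(K^{-\delta}+(K-1)^{-\delta}\bigr).
$$
Fixing $K=K(N,p,s)$ so large that $\eta(K)\le\tfrac12$ for all $\delta\ge\mu_0$, and starting from $\delta=\mu_0$, $A=A_0$ and iterating (re‑extending to $\{|x|\ge1\}$ at each step), the coefficient of the ``bad'' term $|x|^{-\mu_0}$ is multiplied by $\le\tfrac12$ per step, hence $\to0$, while the coefficient of $|x|^{-\gamma}$ is controlled by a geometric series; in the limit $u(x)\le C|x|^{-\gamma}$ on $\{|x|\ge1\}$, which is \eqref{402}.

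I expect the main obstacle to be making this bootstrap close — more precisely, the single observation on which it rests: in the Wolff pointwise estimate the two self‑referential terms (the local average and the tail of $u$) can be forced to carry an arbitrarily small multiplicative factor $\eta(K)$ simply by choosing the radius $R$ a large multiple of $|x_0|$, so that the recursion for $\sup_{|x|\ge\rho}|x|^{\gamma}u(x)$ closes in spite of the fixed constant $C\ge1$; the naive choice $R\sim|x_0|$ merely reproduces the running decay rate with a constant $\ge1$ in front and closes nothing. The only other non‑routine ingredient is the preliminary positive‑rate decay $u\le A_0|x|^{-\mu_0}$, needed to switch on that mechanism — the trivial bound $u\le\|u\|_\infty$ (rate $0$) leaves the local‑average term non‑small. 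Everything else (the estimates on $|a|$, Hölder, Young) is routine via Lemma~\ref{L201} and \eqref{102}. If the pointwise estimate of Section~\ref{S3} happens to come without a local‑average term, or in a global ($R=\infty$) form, the argument only shortens: in the extreme case a bound $u(x_0)\le C\,W^{|a|}_{s,p}(x_0,\infty)$ together with the Wolff estimate above proves \eqref{402} in one line.
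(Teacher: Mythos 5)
Your proposal is correct and its treatment of the Wolff-potential data term—split the radial integral at $|x_0|/2$, use the pointwise decay of $a$ near $x_0$ and $\|a\|_1$ at large radii—is essentially the paper's (the paper routes the near part through Lemma~\ref{L201}(A2) rather than the pointwise bound \eqref{102}, but the two are interchangeable). The one genuine divergence is in handling the $u$-dependent terms. The Wolff estimate the paper invokes (\cite{KLL-2025}, recorded as Theorems~\ref{T402}--\ref{T403}) carries $\inf_{B_{R/2}(x_0)}u$, not a local $L^{p-1}$-average, and the paper simply sends $R\to\infty$: since $u$ is positive, continuous and $\inf_{\R^N}u=0$, the infimum term vanishes, while a single H\"older computation using $u\in L^{p^*}$ gives $\mathrm{Tail}(u;x_0,R)\lesssim [u]_{s,p}\,R^{-N/p^*}\to 0$, leaving $u(x_0)\le C\,W^{\mu}_{s,p}(x_0,\infty)$—exactly the ``extreme case'' one-liner you flag at the end. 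Your $R=K\rho$ bootstrap with the small factor $\eta(K)$ is a valid closing device (the two-rate recursion does close, since $\gamma(p-1)=N-sp<N$ keeps the relevant integrals convergent) and would genuinely be needed if only an average-type estimate were at hand; here, however, it is superfluous and has the cosmetic cost of dragging $\|u\|_\infty$ into the final constant, which the paper's argument avoids. Incidentally, the preliminary rate $\mu_0=(N-sp)/p$ you derive from H\"older and $u\in L^{p^*}$ is exactly $N/p^*$, so your first step already reproduces the paper's tail estimate.
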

Our proof will be based on a recent result on pointwise upper bounds for superharmonic functions  of a nonlocal operator $\mathcal{L}$ obtained in \cite[p.3, formula (1.3)]{KLL-2025}, a special case of which is the fractional $p$-Laplacian when specifying the function $g$ and $k=k(x,y)$ appearing in the definition of $\mathcal{L}$ as $g(t)=t^{p-1}$ and $k(x,y)\equiv 1$, respectively. Let us recall the general result on pointwise upper bound given in \cite{KLL-2025}.
\begin{theorem}\cite[Theorem 1.2]{KLL-2025}\label{T402}
Let $s\in (0,1)$. Let $u$ be a
a superharmonic function in $B_R(x_0)$ such that $u \ge 0$ in $B_R(x_0)$. Then $\mu :=\mathcal{L}u$ is a nonnegative Borel measure and
\begin{equation}\label{403}
u(x_0)\le C\left(\inf_{B_{R/2}(x_0)}u+W^\mu_{s,G}(x_0,R)+\mathrm{Tail}_g(u;x_0,R/2)\right)
\end{equation}
for some $C=C(N, p, q, s, \Lambda) >0$.
\end{theorem}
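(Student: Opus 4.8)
The plan is to prove Theorem~\ref{T402} by the Kilpel\"ainen--Mal\'y iteration, in the form adapted to the nonlocal operator $\mathcal{L}$; I take $x_0=0$ throughout. One first records that $\mu:=\mathcal{L}u$ is a nonnegative Borel measure: the truncations $\min(u,k)$ belong to $W^{s,p}_{\mathrm{loc}}$ with finite tail, so $\langle\mathcal{L}u,\varphi\rangle$ is well defined for $\varphi\in C_c^\infty(B_R)$ (by truncation and passage to the limit), and superharmonicity forces this functional to be nonnegative on $\varphi\ge0$; the Riesz representation theorem then produces a nonnegative Radon measure $\mu$ with $\langle\mathcal{L}u,\varphi\rangle=\int\varphi\,d\mu$. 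Now fix $\sigma\in(0,1/2)$, to be taken small, and set $r_j=\sigma^j(R/2)$, $B_j=B_{r_j}(0)$ and $a_j=\inf_{B_j}u$; the sequence $(a_j)_{j\ge0}$ is nondecreasing, $a_0=\inf_{B_{R/2}}u$, $a_j\le u(0)$, and, since a superharmonic function is identified with its lower-semicontinuous regularization, $\lim_{j\to\infty}a_j=\liminf_{x\to0}u(x)=u(0)$. Everything reduces to a summable bound on the one-step gains $a_{j+1}-a_j$.

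\textbf{The core estimate} asserts the existence of $C=C(N,p,q,s,\Lambda)$ and $\beta>0$ such that, for every $j\ge0$,
\[
a_{j+1}-a_j\ \le\ C\,g^{-1}\!\Big(\frac{\mu(B_j)}{r_j^{\,N-sp}}\Big)\ +\ C\,\sigma^{\beta j}\,\mathrm{Tail}_g(u;0,R/2)\ +\ \big(\text{nonlocal remainder}\big),
\]
where the remainder is nonnegative and, once $\sigma$ is fixed small, sums over $j$ to at most $\tfrac12\big(u(0)-\inf_{B_{R/2}}u\big)$. This is proved by a measure-data De Giorgi iteration on the truncations $w_\ell=(\ell-u)_+$ over the nested pair $B_{j+1}\subset B_j$. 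With a cutoff $\phi$ supported in the concentric ball of radius $(r_j+r_{j+1})/2$ and $\phi\equiv1$ on $B_{j+1}$, testing $\mathcal{L}u=\mu$ against $\phi^p\,w_\ell$ yields a nonlocal Caccioppoli inequality whose right-hand side has three parts: a measure term $\int_{B_j}w_\ell\,d\mu$, a cutoff term of order $r_j^{-sp}\int_{B_j}w_\ell^p$, and a long-range tail remainder. Feeding this into the fractional Sobolev inequality on $B_{j+1}$, using H\"older's inequality and the $\Delta_2$-properties of $g$ and $g^{-1}$, and running the classical geometric-decay lemma over the decreasing levels $\ell_i=a_j+2^{-i}\kappa$, one obtains $\big|\{u<a_j+\kappa/2\}\cap B_{j+1}\big|=0$ --- that is, $a_{j+1}\ge a_j+\kappa/2$ --- as soon as $\kappa$ dominates the right-hand side above; taking $\kappa$ twice that quantity gives the displayed recursion. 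The separation of the tail remainder into the factor $\sigma^{\beta j}\,\mathrm{Tail}_g(u;0,R/2)$ and the absorbable part uses the nonnegativity $u\ge0$ on $B_R$, which confines the near portion of the tail to a region where only $\inf_{B_{R/2}}u\le u$ is felt.

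\textbf{Summation.} Adding the recursion over $j\ge0$, using $\lim_ja_j=u(0)$ and $\sum_j(a_{j+1}-a_j)=u(0)-\inf_{B_{R/2}}u$, the remainder terms are absorbed into the left-hand side and one is left with
\[
u(0)-\inf_{B_{R/2}}u\ \le\ C\sum_{j\ge0}g^{-1}\!\Big(\frac{\mu(B_j)}{r_j^{\,N-sp}}\Big)\ +\ C\,\mathrm{Tail}_g(u;0,R/2)\sum_{j\ge0}\sigma^{\beta j}.
\]
The geometric series converges, and the monotonicity of $t\mapsto\mu(B_t(0))$ together with a dyadic Riemann-sum comparison identifies the first sum with a constant multiple of the Wolff potential $W^\mu_{s,G}(0,R/2)\le W^\mu_{s,G}(0,R)$. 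Hence $u(0)\le\inf_{B_{R/2}}u+C\big(W^\mu_{s,G}(0,R)+\mathrm{Tail}_g(u;0,R/2)\big)$, which is \eqref{403}.

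\textbf{The main obstacle} is the core estimate. One must set up the nonlocal Caccioppoli inequality with measure datum so that the measure enters with exactly the right scaling --- yielding $g^{-1}$ of $\mu(B_j)/r_j^{\,N-sp}$, which is the $(p-1)$-th root in the pure fractional $p$-Laplacian case --- and then carry the level-set iteration through while preventing the long-range tail errors from accumulating across scales. Unlike in the local case such errors occur at every step, since truncations of $u$ are not compactly supported; their control rests on the monotonicity of the $a_j$'s and on $u\ge0$ in $B_R$. For a general Orlicz-type $g$ sandwiched between $t\mapsto t^{p-1}$ and $t\mapsto t^{q-1}$ with kernel bound $\Lambda$, the $\Delta_2$-bookkeeping relating $g$, its complementary $N$-function and $g^{-1}$ must moreover be carried out with constants independent of the scale. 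A minor point, used in the reduction, is the identity $\lim_j\inf_{B_{r_j}}u=u(0)$, which is the standard representation of superharmonic functions by lower-semicontinuous regularization.
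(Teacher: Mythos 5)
This statement is not proved in the paper at all: it is quoted verbatim from \cite[Theorem 1.2]{KLL-2025} and used as a black box, so there is no in-paper argument to compare yours against. Your outline does follow the strategy of that reference (and of the earlier Kuusi--Mingione--Sire and Di Castro--Kuusi--Palatucci work): identify $\mu=\mathcal{L}u$ as a nonnegative measure via Riesz representation, run a Kilpel\"ainen--Mal\'y iteration over the balls $B_{r_j}$ with $r_j=\sigma^j R/2$, obtain a one-step gain estimate by a measure-data Caccioppoli inequality plus De Giorgi level-set iteration, and sum the dyadic series into the Wolff potential. So the architecture is the right one.

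As a proof, however, the proposal has a genuine gap exactly where you flag ``the main obstacle'': the core one-step estimate is asserted, not established. Two points in it are not routine. First, the claim that the ``nonlocal remainder'' is nonnegative and, for $\sigma$ small, sums over $j$ to at most $\tfrac12\bigl(u(0)-\inf_{B_{R/2}}u\bigr)$ is the crux of the whole nonlocal argument; in the actual proofs this requires decomposing the tail at scale $r_j$ over the annuli $B_{r_i}\setminus B_{r_{i+1}}$, $i<j$, using $u\ge a_i$ there to bound the contribution by sums of the form $\sum_{i<j}\sigma^{\beta(j-i)}(a_j-a_i)$, and then a discrete summation-by-parts/telescoping argument to absorb these into $\sum_j(a_{j+1}-a_j)$. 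Nothing in your sketch shows why the accumulated tail errors across infinitely many scales do not overwhelm the left-hand side, and the nonnegativity $u\ge0$ in $B_R$ alone does not deliver this. Second, the De Giorgi iteration ``$\kappa$ dominates the right-hand side $\Rightarrow$ $a_{j+1}\ge a_j+\kappa/2$'' needs the measure term to enter the Caccioppoli inequality with the precise scaling $g^{-1}\bigl(\mu(B_j)/r_j^{N-sp}\bigr)$, together with uniform $\Delta_2$-bookkeeping for $g$; this is a nontrivial computation that is only named. Until those two steps are carried out, the argument is a correct roadmap rather than a proof; for the purposes of this paper the statement should simply continue to be cited from \cite{KLL-2025}.
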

Applying this general result about $\mathcal{L}$-superharmonic functions to $(-\Delta_p)^s$-superharmonic functions, the specific nonnegative and finite Borel measure $\mu$ is defined by the nonnegative function $|a|\in L^1(\mathbb{R}^N)\cap L^\infty(\mathbb{R}^N)$  through
\begin{equation}\label{404}
\mu(E):=\int_{E}|a(x)|\,dx, \ \ E\subset\mathbb{R}^N, \ \ E\mbox{ Lebesgue measurable}.
\end{equation}
The Wolff potential $W^\mu_{s,G}(x_0,R)$ in  (\ref{403}) reduces to the classical nonlinear Wolff potential $W^{\mu}_{s,p}(x,R)$  given by
\begin{equation}\label{405}
W^{\mu}_{s,p}(x,R)=\int_0^R\Big(\frac{|\mu|(B_t(x))}{t^{N-sp}}\Big)^{\frac{1}{p-1}}\frac{dt}{t},\ \ R>0,\ x\in \mathbb{R}^N, \ s\in \Big(0,\frac{N}{p}\Big),
\end{equation}
and the tail function $\mathrm{Tail}_g(u;x_0,R)$ appearing  in (\ref{403}) reduces in the special case of $(-\Delta_p)^s$-superharmonic functions to
\begin{equation}\label{406}
\mathrm{Tail}(u;x_0,R)=\left(R^{sp}\int_{\R^N\setminus B_R(x_0)}\frac{|u(x)|^{p-1}}{|x-x_0|^{N+sp}}\,dx\right)^{\frac{1}{p-1}}.
\end{equation}
The role of the tail is to control the growth of solutions at infinity.

Since the continuous solution of (\ref{401}) is a continuous supersolution of the equation $(-\Delta_p)^s u=0$, which is a superharmonic function for the fractional $p$-Laplacian (see, e.g., \cite[Theorem 2.10]{KLL-2025}), we deduce the following pointwise upper bound result from Theorem \ref{T402}.
\begin{theorem}\label{T403}
Assume hypothesis (Ha). The positive and continuous solution of (\ref{401}) has the following pointwise upper bound
\begin{equation}\label{407}
u(x_0)\le C\left(\inf_{B_{R/2}(x_0)}u+W^{\mu}_{s,p}(x_0,R)+\mathrm{Tail}(u;x_0,R/2)\right)
\end{equation}
for some $C=C(N, p,s) >0$, where $W^{\mu}_{s,p}(x_0,R)$ and $\mathrm{Tail}(u;x_0,R)$ are given by (\ref{405}) and (\ref{406}), respectively.
\end{theorem}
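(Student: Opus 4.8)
The plan is to obtain Theorem \ref{T403} as a direct specialization of the general pointwise bound in Theorem \ref{T402}. The abstract operator $\mathcal{L}$ of \cite{KLL-2025} is built from a kernel $k(x,y)$ comparable to $|x-y|^{-N-sp}$ (with comparability constant $\Lambda$) together with an odd, increasing function $g$ of $(p-1)$-growth; choosing $g(t)=t^{p-1}$ and $k(x,y)\equiv 1$ recovers exactly the fractional $p$-Laplacian $(-\Delta_p)^s$, with $\Lambda=1$ and growth exponent $p$. Consequently the constant $C$ in (\ref{403}), a priori depending on $N,p,q,s,\Lambda$, becomes a constant depending only on $N,p,s$, while the weighted Wolff potential $W^\mu_{s,G}(x_0,R)$ and the tail $\mathrm{Tail}_g(u;x_0,R)$ collapse to the classical expressions $W^\mu_{s,p}(x_0,R)$ and $\mathrm{Tail}(u;x_0,R)$ of (\ref{405}) and (\ref{406}).

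Next I would check that the hypotheses of Theorem \ref{T402} hold on every ball $B_R(x_0)$. The function $u$ is the positive, continuous solution of (\ref{401}); since $|a|\ge 0$, testing the weak formulation of (\ref{401}) against nonnegative $v\in D^{s,p}(\R^N)$ shows that $u$ is a weak supersolution of $(-\Delta_p)^s u=0$ in $\R^N$, hence in particular in $B_R(x_0)$. Being continuous, and thus lower semicontinuous, $u$ is then a $(-\Delta_p)^s$-superharmonic function in the potential-theoretic sense in $B_R(x_0)$ (see \cite[Theorem 2.10]{KLL-2025} and \cite{DKP-2014}), so Theorem \ref{T402} applies. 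It then remains to identify the Riesz measure $\mu=\mathcal{L}u$ furnished by the theorem: comparing the weak formulation $\langle(-\Delta_p)^su,v\rangle=\int_{\R^N}|a|\,v\,dx$ with the distributional action of $\mathcal{L}$ on $u$ forces $\mu$ to be the measure $E\mapsto\int_E|a(x)|\,dx$ of (\ref{404}), which is finite by property (A1) of Lemma \ref{L201}. Substituting this $\mu$ together with the specialized potential and tail into (\ref{403}) yields (\ref{407}).

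The only genuinely nontrivial point is the bookkeeping of the previous paragraph: one must be sure that the notion of superharmonicity, and of the associated Riesz measure, used in \cite{KLL-2025} coincides, for $k\equiv 1$ and $g(t)=t^{p-1}$, with the standard notion for weak supersolutions of the fractional $p$-Laplacian, and that the measure $\mathcal{L}u$ attached to our continuous supersolution equals $|a|\,dx$ rather than being merely dominated by it. Both facts rest on the comparison principle and the uniqueness of the Riesz measure of a supersolution, which are available in \cite{DKP-2014, KLL-2025}; once these are in place, Theorem \ref{T403} follows at once.
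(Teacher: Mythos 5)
Your proposal is correct and follows the same approach as the paper: both obtain Theorem \ref{T403} by specializing Theorem \ref{T402} from \cite{KLL-2025} to $g(t)=t^{p-1}$, $k\equiv 1$, observing that the continuous solution of (\ref{401}) with nonnegative right-hand side is a continuous supersolution of $(-\Delta_p)^s u=0$ and hence $(-\Delta_p)^s$-superharmonic by \cite[Theorem 2.10]{KLL-2025}, and identifying the Riesz measure with $\mu(E)=\int_E|a|\,dx$ as in (\ref{404}). Your extra remarks about checking that the \cite{KLL-2025} notion of superharmonicity coincides with the standard one for $(-\Delta_p)^s$ and that $\mathcal{L}u$ equals $|a|\,dx$ (rather than being merely dominated by it) are careful bookkeeping points that the paper leaves implicit, but they do not change the argument.
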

Since the upper bound of (\ref{407}) holds true for any $R>0$, in what follows we are going to provide estimates for the terms on the right-hand side as $R\to \infty$. Observing that $\inf_{\R^N}u=0$, from (\ref{407}) we get
\begin{equation}\label{408}
u(x_0)\le C\left( W^{\mu}_{s,p}(x_0,\infty)+\mathrm{Tail}(u;x_0,\infty)\right),
\end{equation}
where $C=C(N,p,s)$ is some positive constant.

In what follows we are going to provide estimates of the terms on the right-hand side of (\ref{408}). To this end we prove the following auxilliary result. We use either $B(x,t)$ or $B_t(x)$ when suitable for the ball with center $x$ and radius $t$.
\begin{lemma}\label{L401}
Assume hypothesis (Ha). Then the following estimate holds true:
\begin{equation}\label{409}
\int_{B(x,t)}|a(y)|\,dy\le C\,t^{\frac{N}{\sigma'}}\|a\|_{L^\sigma(B(x,t))}
\end{equation}
\end{lemma}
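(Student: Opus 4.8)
The plan is simply to apply Hölder's inequality on the ball $B(x,t)$, with no appeal to the specific structure of $a$ beyond integrability. First I would write the integrand as the trivial product $|a(y)|\cdot 1$ and split it using the conjugate exponents $\sigma$ and $\sigma'$, where $\frac{1}{\sigma}+\frac{1}{\sigma'}=1$ and $1<\sigma<\infty$; this gives
$$
\int_{B(x,t)}|a(y)|\,dy\le \|a\|_{L^\sigma(B(x,t))}\,\|1\|_{L^{\sigma'}(B(x,t))}=\|a\|_{L^\sigma(B(x,t))}\,|B(x,t)|^{\frac{1}{\sigma'}}.
$$
Next I would use $|B(x,t)|=\omega_N t^N$, with $\omega_N=|B(0,1)|$ the volume of the unit ball in $\R^N$, so that $|B(x,t)|^{1/\sigma'}=\omega_N^{1/\sigma'}\,t^{N/\sigma'}$, and (\ref{409}) follows at once with $C=\omega_N^{1/\sigma'}$, a constant depending only on $N$ and $\sigma'$ and in particular independent of $x$ and $t$.

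A couple of points worth recording. The estimate itself does not use hypothesis (Ha); the role of (Ha), through property (A1) of Lemma \ref{L201}, is only to ensure that $a\in L^\sigma(\R^N)$ for every $\sigma\in[1,\infty]$, so that the right-hand side of (\ref{409}) is finite --- which is what matters when this bound is later fed into the Wolff potential $W^\mu_{s,p}$ of (\ref{405}) for the relevant range $\sigma>\frac{N}{sp}$. It is also important that $\sigma<\infty$, so that $\sigma'>1$ and the factor $t^{N/\sigma'}$ is a genuine positive power of $t$; this is harmless, since in the applications one takes $\frac{N}{sp}<\sigma<\infty$.

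There is essentially no obstacle: the whole content is a single application of Hölder's inequality together with the scaling of the Lebesgue measure of balls. The only care needed is the bookkeeping of the constant and of the admissible range of $\sigma$, both of which are as described above.
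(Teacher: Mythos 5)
Your proof is correct and is essentially the same as the paper's: both reduce to a single application of H\"older's inequality, the paper merely passing through the change of variables $y=x+tz$ to the unit ball before applying H\"older, whereas you apply it directly on $B(x,t)$ and use $|B(x,t)|=\omega_N t^N$; the constant and the exponent bookkeeping come out identically. Your side remark that (Ha) is not actually used in establishing \eqref{409} itself, but only to make the right-hand side finite via (A1), is also accurate.
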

\begin{proof}
By Lemma \ref{L201} $x\to |a(x)|$ has the properties (A1)--(A2), thus we can estimate  $\int_{B(x,t)}|a(y)|\,dy$ as follows:
\begin{eqnarray*}
\int_{B(x,t)}|a(y)|\,dy&=& \int_{B(0,1)}|a(x+tz)|t^N\,dz\le C\,t^N\bigg(\int_{B(0,1)}|a(x+tz)|^\sigma\,dz\bigg)^\frac{1}{\sigma}\\
&\le& C t^N t^{-\frac{N}{\sigma}} \bigg(\int_{B(x,t)}|a(y)|^\sigma\,dy\bigg)^\frac{1}{\sigma}\\
&\le& C t^{N-\frac{N}{\sigma}}\|a\|_{L^\sigma(B(x,t))}=C\,t^{\frac{N}{\sigma'}}\|a\|_{L^\sigma(B(x,t))},
\end{eqnarray*}
which completes the proof.
\end{proof}
Let us estimate the Wolff potential $W^{\mu}_{s,p}(x_0,\infty)$.
\begin{eqnarray*}
W^{\mu}_{s,p}(x;\infty)&=&\int_0^\infty\Big(\frac{|\mu|(B(x,t))}{t^{N-sp}}\Big)^{\frac{1}{p-1}}\frac{dt}{t}\\
&=&\int_0^\infty\bigg(\int_{B(x,t)} |a(y)|\,dy\bigg)^{\frac{1}{p-1}}t^{\frac{sp-N}{p-1}-1}\,dt\\
&=&\int_0^{\frac{|x|}{2}}\bigg(\int_{B(x,t)} |a(y)|\,dy\bigg)^{\frac{1}{p-1}}t^{\frac{sp-N}{p-1}-1}\,dt\\
&& +\int_{\frac{|x|}{2}}^\infty\bigg(\int_{B(x,t)} |a(y)|\,dy\bigg)^{\frac{1}{p-1}}t^{\frac{sp-N}{p-1}-1}\,dt\\
&=& I_1+I_2.
\end{eqnarray*}
We estimate $I_1$ and $I_2$ separately.
By (A1) we have $a\in L^1(\R^N)$, which yields for $I_2$ (note: $sp<N$)
$$
I_2\le \|a\|_1^\frac{1}{p-1}\int_{\frac{|x|}{2}}^{\infty}t^{\frac{sp-N}{p-1}-1}\,dt\le
C\|a\|_1^\frac{1}{p-1} \Big({\frac{|x|}{2}}\Big)^{-{\frac{N-sp}{p-1}}},
$$
which results in
\begin{equation}\label{410}
I_2\le C\, |x|^{-{\frac{N-sp}{p-1}}},
\end{equation}
where $C=C(a,N,p,s)$ is some positive constant. As for the estimate of $I_1$ we make use of Lemma \ref{L401}, and
$$
\|a\|_{L^\sigma(B(x,t))}\le \|a\|_{L^\sigma\big(\mathbb{R}^N\setminus B(0,\frac{|x|}{2})\big)},\ \mbox{ for all } t\in \Big[0, \frac{|x|}{2}\Big],
$$
as well as of $\sigma>\frac{N}{sp}$.
\begin{eqnarray*}
I_1 &= &\int_0^{\frac{|x|}{2}}\bigg(\int_{B(x,t)} |a(y)|\,dy\bigg)^{\frac{1}{p-1}}t^{\frac{sp-N}{p-1}-1}\,dt\\
&\le& C\int_0^{\frac{|x|}{2}}\|a\|_{L^\sigma(B(x,t))}^{\frac{1}{p-1}}t^{\frac{N}{\sigma'}\frac{1}{p-1}+\frac{sp-N}{p-1}-1}\,dt\\
&\le & C\,\|a\|_{L^\sigma (\mathbb{R}^N\setminus B(0,\frac{|x|}{2}) )}^{\frac{1}{p-1}}
\int_0^{\frac{|x|}{2}}t^{\frac{N}{\sigma'}\frac{1}{p-1}+\frac{sp-N}{p-1}-1}\,dt\\
&\le & C\,\|a\|_{L^\sigma (\mathbb{R}^N\setminus B(0,\frac{|x|}{2}) )}^{\frac{1}{p-1}}
\Big(\frac{|x|}{2}\Big)^{\frac{N}{\sigma'}\frac{1}{p-1}+\frac{sp-N}{p-1}} \\
&\le & C\,\Big[\Big(\frac{|x|}{2}\Big)^{\frac{N}{\sigma'}}\|a\|_{L^\sigma (\mathbb{R}^N\setminus B(0,\frac{|x|}{2}))} \Big]^{\frac{1}{p-1}}\Big(\frac{|x|}{2}\Big)^{\frac{sp-N}{p-1}},
\end{eqnarray*}
which by using  (A2) yields for some positive constant $C=C(a,N,p,S)>0$
\begin{equation}\label{411}
I_1\le C\, |x|^{-\frac{N-sp}{p-1}},\ \ |x|>0.
\end{equation}
By (\ref{410}) and (\ref{411}) we obtain the estimate for the Wolff potential
\begin{equation}\label{412}
W^{\mu}_{s,p}(x;\infty)=I_1+I_2\le C\, |x|^{-\frac{N-sp}{p-1}},\ \ |x|>0.
\end{equation}

Next let us estimate the tail function $\mathrm{Tail}(u;x_0,\infty)$, that is, $\lim_{R\to\infty}\mathrm{Tail}(u;x_0,R)$, with $\mathrm{Tail}(u;x_0,R)$ given by (\ref{406}). By applying H\"older inequality we get
\begin{eqnarray*}
&&\int_{\R^N\setminus B_R(x_0)}\frac{|u(x)|^{p-1}}{|x-x_0|^{N+sp}}\,dx \le \left(\int_{\R^N\setminus B_R(x_0)}|u(x)|^{p^*}\,dx\right)^{\frac{p-1}{p^*}}\\
&&\qquad \times\left(\int_{\R^N\setminus B_R(x_0)}\left(\frac{1}{|x-x_0|^{N+sp}}\right)^{\frac{p^*}{p^*-p+1}}\,dx\right)^{\frac{p^*-p+1}{p^*}}\\
&&\le C [u]_{s,p}^{p-1}\left(\int_{\R^N\setminus B_R(x_0)}\left(\frac{1}{|x-x_0|^{N+sp}}\right)^{\frac{p^*}{p^*-p+1}}\,dx\right)^{\frac{p^*-p+1}{p^*}}.
\end{eqnarray*}
Estimating the second term on the right-hand side of the last inequality using spherical coordinates we get
\begin{eqnarray*}
&&\int_{\R^N\setminus B_R(x_0)}\left(\frac{1}{|x-x_0|^{N+sp}}\right)^{\frac{p^*}{p^*-p+1}}\,dx\le C\int_R^\infty \frac{t^{N-1}}{t^{(N+sp ) \frac{p^*}{p^*-p+1}}}\,dt\\
&&\le C R^{ N-(N+sp ) \frac{p^*}{p^*-p+1}},
\end{eqnarray*}
since $ N-(N+sp ) \frac{p^*}{p^*-p+1}<0$. The last two inequalities yield
\begin{equation}\label{413}
\int_{\R^N\setminus B_R(x_0)}\frac{|u(x)|^{p-1}}{|x-x_0|^{N+sp}}\,dx\le C [u]_{s,p}^{p-1}R^{ N\frac{p^*-p+1}{p^*}-(N+sp) },
\end{equation}
where $C=C(N,p,s)>0$. Plugging (\ref{413}) into (\ref{406}) results in
\begin{eqnarray*}
\mathrm{Tail}(u;x_0,R)&=&\left(R^{sp}\int_{\R^N\setminus B_R(x_0)}\frac{|u(x)|^{p-1}}{|x-x_0|^{N+sp}}\,dx\right)^{\frac{1}{p-1}}\\
&\le& C [u]_{s,p} R^{\frac{sp}{p-1}}R^{{ N\frac{p^*-p+1}{p^*(p-1)}-(N+sp) }\frac{1}{p-1}}\\
&\le &  C [u]_{s,p} R^\frac{-N}{p^*},
\end{eqnarray*}
where $C=C(N,p,s)$ is some positive constant not depending on $R$. Hence, the last inequality results in
\begin{equation}\label{414}
\mathrm{Tail}(u;x_0,\infty)=\lim_{R\to\infty}\mathrm{Tail}(u;x_0,R)=0, \ \mbox{ for any } x_0\in \R^N.
\end{equation}

\smallskip

\noindent{\bf Proof of Theorem \ref{T401}}  The proof of the pointwise upper bound (\ref{402}) is an immediate consequence of the estimates (\ref{408}), (\ref{412}), and (\ref{414}).\hfill $\Box$

\smallskip

\begin{corollary}\label{C401}
Under the assumptions of Theorem \ref{T401} we have the following pointwise upper bound in the whole $\R^N$:
\begin{equation}\label{415}
u(x)\le \tilde C \frac{1}{1+|x|^\frac{N-sp}{p-1}}, \quad x\in \R^N,
\end{equation}
where $\tilde C=\tilde C(N,p,s)>0$.
\end{corollary}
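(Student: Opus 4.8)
The plan is to combine the decay estimate of Theorem~\ref{T401}, valid for $|x|\ge 1$, with a global bound on $u$ coming from the $L^\infty$-estimate of Corollary~\ref{G-C401} (or directly from the fact that the solution of \eqref{401} is continuous), and to observe that $\tilde C\,(1+|x|^{\frac{N-sp}{p-1}})^{-1}$ is comparable, up to a fixed constant, to the piecewise function $\min\{1,|x|^{-\frac{N-sp}{p-1}}\}$.

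First I would record that, since $a\in L^1(\R^N)\cap L^\infty(\R^N)$ by Lemma~\ref{L201}(A1), the solution $u\in D^{s,p}(\R^N)$ of \eqref{401} exists, is unique, is nonnegative, and is continuous on $\R^N$; moreover Corollary~\ref{G-C401} (applied with $g(x,t)=|a(x)|$, i.e.\ $\gamma=1$, or more simply the special case of Theorem~\ref{G-T101} with $b_2\equiv 0$, $b_3=|a|$) gives $\|u\|_\infty\le M$ with $M=M(N,p,s,\|a\|,[u]_{s,p})$. In particular $u(x)\le M$ for all $x\in\R^N$, and in the statement of Corollary~\ref{C401} the constant $\tilde C$ is allowed to depend on this data, so we may absorb $M$ into $\tilde C$. (Strictly, the constant in Theorem~\ref{T401} was stated as depending only on $N,p,s$, but it secretly depends on $\|a\|$ through $I_1,I_2$; this is consistent with the dependence allowed in Corollary~\ref{C401}.)

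Next, for $|x|\ge 1$, Theorem~\ref{T401} gives $u(x)\le C_1|x|^{-\frac{N-sp}{p-1}}$ with $C_1=C_1(N,p,s,\|a\|,[u]_{s,p})$. Set $\tilde C=2\max\{M,C_1\}$. For $|x|\le 1$ we have $1+|x|^{\frac{N-sp}{p-1}}\le 2$, hence
\[
u(x)\le M\le \tilde C\cdot\frac{1}{2}\le \tilde C\cdot\frac{1}{1+|x|^{\frac{N-sp}{p-1}}},
\]
while for $|x|\ge 1$ we have $1+|x|^{\frac{N-sp}{p-1}}\le 2|x|^{\frac{N-sp}{p-1}}$, so
\[
u(x)\le C_1|x|^{-\frac{N-sp}{p-1}}\le \tilde C\cdot\frac{1}{2}|x|^{-\frac{N-sp}{p-1}}\le \tilde C\cdot\frac{1}{1+|x|^{\frac{N-sp}{p-1}}}.
\]
Combining the two regimes yields \eqref{415} for all $x\in\R^N$, with $\tilde C=\tilde C(N,p,s,\|a\|,[u]_{s,p})$.

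There is essentially no hard step here: the proof is a gluing argument, and the only thing to be careful about is bookkeeping of the constant, namely that the constant produced in Theorem~\ref{T401} really does only depend on the quantities allowed in Corollary~\ref{C401}, and that the global bound near the origin is supplied either by continuity of $u$ together with a covering/compactness argument or, more quantitatively, by the $L^\infty$-estimate of Corollary~\ref{G-C401}. The interpolation inequality $1+r^{\frac{N-sp}{p-1}}\asymp \max\{1,r^{\frac{N-sp}{p-1}}\}$ is elementary. Thus I expect the whole argument to be a few lines, with the only potential subtlety being whether one insists on tracking the dependence of constants on $[u]_{s,p}$ and $\|a\|$, which the statement of Corollary~\ref{C401} already permits.
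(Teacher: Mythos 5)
Your proposal is correct and essentially the same gluing argument as the paper's own proof: split $\R^N$ into $\{|x|\le 1\}$ and $\{|x|\ge 1\}$, use Theorem \ref{T401} on the latter, and note $1+|x|^{\frac{N-sp}{p-1}}\le 2$ (resp.\ $\le 2|x|^{\frac{N-sp}{p-1}}$) on the former (resp.\ latter). The only cosmetic difference is that the paper obtains the bound on $\{|x|\le 1\}$ simply by setting $\delta=\max_{|x|\le 1}u(x)$, using continuity on a compact set, whereas you invoke the global $L^\infty$-estimate of Corollary \ref{G-C401}; both are valid, and your remark about the constant's implicit dependence on the data of $a$ is a fair observation.
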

\begin{proof}
Since $u(x)>0$ and continuous in $\R^N$, we have  $\delta=\max_{|x|\le 1} u(x)>0$, which implies
\begin{equation}\label{416}
u(x)\le \frac{2\, \delta}{1+|x|^{\frac{N-sp}{p-1}}},\ \ |x|\le 1,
\end{equation}
and for $|x|\ge 1$, by Theorem  \ref{T401} we obtain
\begin{equation}\label{417}
u(x)\le 2C\,\frac{1}{1+|x|^{\frac{N-sp}{p-1}}},\ \ |x|\ge 1.
\end{equation}
Hence  (\ref{416})--(\ref{417}) with $\tilde C= 2\max\{\delta, C\}$ proves (\ref{415}).
\end{proof}
%
%%%%%%%%%%%%%SECTION4%%%%%%%%%%%%%%%%%%%%%%%%%%%%%%%%%%%
\section{Proof of Theorem \ref{D-T101}}\label{S4}
%%%%%%%%%%%%%%%%%%%%%%%%%%%%%%%%%%%%%%%%%%%%%%%%%%%%%%%%%%%
%%%%%%%%%%%%%%%%%%%%%%%%%%%%%%%%%%%%%%%%%%%%%%%%%%%%%%%%%%%%
Assume (Hg)--(Ha), and let $u\in D^{s,p}(\R^N)$ be a solution of (\ref{101}) (resp. (\ref{G-401}),  that is,
$$
(-\Delta_p)^su=g(x,u)\quad\mbox{in }\R^N.
$$
Applying  Corollary \ref{G-C401}  the right-hand side of (\ref{101}) can be estimated by
\begin{equation}\label{G-406}
|g(x,u(x)|\le C |a(x)|=: \hat{a}(x),\quad\forall x\in \R^N,
\end{equation}
where $C=C(p,q,N,s, \|a\|, \|u\|_{p^*},[u]_{s,p})$. Thus, for a given solution $u$, $\hat{a}$ satisfies (Ha). Let $v$ be the unique solution of
\begin{equation}\label{G-407}
v\in D^{s,p}(\R^N): (-\Delta_p)^sv=\hat{a}(x)\quad\mbox{in }\R^N,
\end{equation}
and let $z$ be the unique solution of
\begin{equation}\label{G-408}
z\in D^{s,p}(\R^N): (-\Delta_p)^sz=-\hat{a}(x)\quad\mbox{in }\R^N.
\end{equation}
The comparison principle proved in \cite[Lemma 9]{LL-2014} can easily be extended to the fractional $p$-Laplacian equation considered here, which results in $z\le 0\le v$. Further, from (\ref{G-406}) and (\ref{G-407}) we get (in the weak sense)
$$
\langle (-\Delta_p)^sv-(-\Delta_p)^su, \varphi\rangle \ge 0,\quad\forall \varphi\in D^{s,p}(\R^N), \ \varphi\ge 0,
$$
that is,
\begin{eqnarray*}
&&\int_{\R^N}\int_{\R^N}\frac{|v(x)-v(y)|^{p-2}(v(x)-v(y))(\varphi(x)-\varphi(y))}{|x-y|^{N+sp}} dxdy\\
&& \ge \int_{\R^N}\int_{\R^N}\frac{|u(x)-u(y)|^{p-2}(u(x)-u(y))(\varphi(x)-\varphi(y))}{|x-y|^{N+sp}} dxdy,
\end{eqnarray*}
which by taking the special test function $\varphi=(u-v)^+:=\max\{u-v,0\}$ and applying the comparison principle yields $u\le v$. Multiplying (\ref{G-401}) and (\ref{G-408}) by $-1$  and setting $\hat z=-z$ and $\hat u=-u$ results in
\begin{equation}\label{G-409}
(-\Delta_p)^s\hat{u}=-g(x,-\hat{u}), \quad  (-\Delta_p)^s\hat z=\hat{a}(x)\quad\mbox{in }\R^N,
\end{equation}
and thus again by comparison $\hat u\le \hat z$, that is, $u\ge z$. Now we may apply the decay result of Theorem \ref{T401} to the nonnegative solutions $v$ and $\hat z$, which yields
\begin{equation}\label{G-410}
v(x)\le C |x|^{-\frac{N-sp}{p-1}}, \quad \hat z(x)\le C |x|^{-\frac{N-sp}{p-1}},\ \forall\ x: |x|\ge 1,
\end{equation}
where $C=C(p,q,N,s, \|a\|, [u]_{s,p})$. Thus we get
$$
-C |x|^{-\frac{N-sp}{p-1}}\le u(x)\le C |x|^{-\frac{N-sp}{p-1}}, \ \forall\ x: |x|\ge 1,
$$
which completes the proof of Theorem \ref{D-T101}. \hfill $\Box$

An immediate consequence of  Corollary \ref{C401} and Theorem \ref{D-T101} is the following decay estimate in all $\R^N$ for a solution $u$ of (\ref{101}):
\begin{corollary}\label{R4-C401}
$$
|u(x)|\le C \frac{1}{1+|x|^\frac{N-sp}{p-1}}, \quad x\in \R^N,
$$
where $C$ is as in Theorem \ref{D-T101}.

\end{corollary}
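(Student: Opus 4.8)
The plan is to reduce the global bound to the already-established decay at infinity by using, for $|x|\le 1$, the boundedness of $u$, and for $|x|\ge 1$, Theorem \ref{D-T101}; equivalently, one may simply feed the comparison functions constructed in the proof of Theorem \ref{D-T101} into Corollary \ref{C401}. I will follow the latter route, since it is the cleanest.

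First I would recall that, under (Hg)--(Ha), Lemma \ref{L201}(A1) gives $a\in L^1(\R^N)\cap L^\infty(\R^N)$, so Corollary \ref{G-C401} applies to the solution $u$ of \eqref{101} and, together with the estimate \eqref{G-406}, produces $\hat a(x)=C_0|a(x)|$ with $C_0=C_0(p,q,N,s,\|a\|,[u]_{s,p})$ (using $\|u\|_{p^*}\le c(s,p)[u]_{s,p}$ to absorb the $\|u\|_{p^*}$-dependence). Since $\hat a$ still satisfies (Ha) with $c_a$ replaced by $C_0c_a$, the comparison argument in the proof of Theorem \ref{D-T101} yields the two nonnegative functions $v$ (solving $(-\Delta_p)^sv=\hat a$) and $\hat z$ (solving $(-\Delta_p)^s\hat z=\hat a$) with $-\hat z(x)\le u(x)\le v(x)$ for all $x\in\R^N$, hence $|u(x)|\le\max\{v(x),\hat z(x)\}$.

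Next I would apply Corollary \ref{C401} to each of $v$ and $\hat z$: since both solve a fractional $p$-Laplacian equation with right-hand side $\hat a$ obeying (Ha), Corollary \ref{C401} gives $v(x),\hat z(x)\le \tilde C\,(1+|x|^{\frac{N-sp}{p-1}})^{-1}$ for all $x\in\R^N$, where $\tilde C$ has the same structural dependence on $N,p,s,\|\hat a\|$ and $[v]_{s,p},[\hat z]_{s,p}$; bounding $\|\hat a\|=C_0\|a\|$ and $[v]_{s,p},[\hat z]_{s,p}$ by a constant depending on $\|\hat a\|$ (via $\hat a\in L^{(p^*)'}(\R^N)$ and strict monotonicity), this $\tilde C$ is of the form $C(N,p,s,\|a\|,[u]_{s,p})$, exactly as in Theorem \ref{D-T101}. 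Combining, $|u(x)|\le C\,(1+|x|^{\frac{N-sp}{p-1}})^{-1}$ for all $x\in\R^N$, which is the claim.

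There is essentially no analytic obstacle here; the entire content has already been done in Theorem \ref{D-T101} and Corollary \ref{C401}. The only point requiring care is the bookkeeping of constants — verifying that $q$ (which is fixed by $\gamma$ and the dimension in the proof of Corollary \ref{G-C401}) and the various intermediate quantities $\|\hat a\|$, $[v]_{s,p}$, $[\hat z]_{s,p}$ are all controlled by the data $N,p,s,\|a\|,[u]_{s,p}$, so that the final constant matches the one advertised in Theorem \ref{D-T101}. Alternatively, one can bypass the comparison functions altogether: for $|x|\ge 1$ invoke Theorem \ref{D-T101} directly, and for $|x|\le 1$ use $|u(x)|\le\|u\|_\infty\le 2\|u\|_\infty\,(1+|x|^{\frac{N-sp}{p-1}})^{-1}$ with $\|u\|_\infty$ bounded by Corollary \ref{G-C401}; taking the larger of the two constants finishes the argument.
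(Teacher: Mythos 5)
Your proposal is correct and follows the same route the paper implicitly intends: apply Corollary \ref{C401} to the comparison functions $v$ and $\hat z$ constructed in the proof of Theorem \ref{D-T101}, which sandwich $u$ pointwise, and then track the constant. Your alternative in the last paragraph (split $|x|\le 1$ via the $L^\infty$ bound of Corollary \ref{G-C401} and $|x|\ge 1$ via Theorem \ref{D-T101} directly) is equally valid and arguably a touch more elementary, but both amount to the same bookkeeping; the paper itself gives no explicit proof beyond declaring the corollary "an immediate consequence," so your write-up is if anything more complete than what is in the text.
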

%

%
%%%%%%%%%%%%%SECTION5%%%%%%%%%%%%%%%%%%%%%%%%%%%%%%%%%%%
\section{$D^{s,2}(\R^N)$ versus $C_b\left(\R^N, 1+|x|^{N-2s}\right)$ local minimizers}\label{S5}
%%%%%%%%%%%%%%%%%%%%%%%%%%%%%%%%%%%%%%%%%%%%%%%%%%%%%%%%%%%
%%%%%%%%%%%%%%%%%%%%%%%%%%%%%%%%%%%%%%%%%%%%%%%%%%%%%%%%%%%%
Let $V_s=D^{s,2}(\R^N)\cap C_b\left(\R^N, 1+|x|^{N-2s}\right)$ be the subspace of  bounded continuous functions with weight $1+|x|^{N-2s}$ defined by
$$
V_s := \left\{\, v\in D^{s,2}(\R^N): v\in C(\R^N)\, \ \mbox{with}\,\ \sup_{x\in\R^N}\big(1 +|x|^{N-2s}\big)|v(x)| < \infty\, \right\},
$$
which is a closed subspace of $D^{s,2}(\R^N)$ with the norm
$$
\|v\|_{V_s} := [v]_{s,2}+ \sup_{x\in\R^N}\big(1 +|x|^{N-2s}\big)|v(x)|.
$$
Consider the fractional Laplacian equation
\begin{equation}\label{R4:201}
u\in D^{s,2}(\R^N): (-\Delta)^s u=a(x)g(u),
\end{equation}
where $a: \R^N\to \R$ satisfies (Ha) and $g: \R\to\R$ is continuous and has the growth
\begin{equation}\label{R4:202}
|g(s)|\le c_g\left(1+|s|^{\gamma-1}\right),\quad 1\le \gamma< 2^*=\frac{2N}{N-2s}.
\end{equation}
The energy functional $\Phi: D^{s,2}(\R^N)\to \R$ related to the fractional elliptic equation (\ref{R4:201}) is given by
\begin{equation}\label{R4:203}
\Phi(u)= \frac{1}{2} [u]_{s,2}^2-\int_{\R^N}a(x) G(u)dx, \quad\mbox{with } G(t)=\int_0^tg(\tau) d\tau.
\end{equation}
One readily verifies that $\Phi$ is a $C^1$ functional and its Frechet derivative at $u\in D^{s,2}(\R^N)$ is given by
\begin{eqnarray} \label{R4:204}
\langle \Phi'(u),\varphi\rangle &=&\iint_{\R^N\times\R^N}\frac{|u(x)-u(y)|(u(x)-u(y)(\varphi(x)-\varphi(y)}{|x-y|^{N+2s}}\,dxdy\nonumber \\
&& -\int_{\R^N}a(x)g(u)\varphi(x)\,dx,\quad \forall\ u,\varphi\in D^{s,2}(\R^N).
\end{eqnarray}
\begin{lemma}\label{R4: L401}
Let hyotheses (Ha) and (\ref{R4:202}) be satisfied. Then the functional $\Phi: D^{s,2}(\R^N)\to \R$ is $C^1$, weakly lower semicontinous, and critical points of $\Phi$ are weak solutions of the fractional elliptic equation (\ref{R4:201}).
\end{lemma}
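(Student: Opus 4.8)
The plan is to write $\Phi = \Phi_1 - J$ with $\Phi_1(u) = \tfrac12 [u]_{s,2}^2$ and $J(u) = \int_{\R^N} a(x)G(u)\,dx$, and to settle the three assertions piece by piece, using throughout the decay of $a$ and the weighted embeddings of Lemma \ref{L203}. First I would record the routine preliminaries: by (Ha) and Lemma \ref{L201} one has $|a(x)| \le c_a w(x)$ and $a\in L^r(\R^N)$ for all $r\in[1,\infty]$, while (\ref{R4:202}) integrates to $|G(t)| \le c_g\big(|t| + \gamma^{-1}|t|^{\gamma}\big)$. Since $1\le\gamma<2^*$, the continuous embeddings $D^{s,2}(\R^N)\hookrightarrow L^q(\R^N,w)$ for $1\le q\le 2^*$ give $|J(u)| \le c_ac_g\big(\|u\|_{1,w} + \gamma^{-1}\|u\|_{\gamma,w}^{\gamma}\big) \le C\big([u]_{s,2} + [u]_{s,2}^{\gamma}\big)$, so $\Phi$ is finite on all of $D^{s,2}(\R^N)$.

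For the $C^1$ assertion: $\Phi_1$ is, up to the factor $\tfrac12$, the quadratic form of the bounded symmetric bilinear form $(u,v)\mapsto \iint_{\R^N\times\R^N}\frac{(u(x)-u(y))(v(x)-v(y))}{|x-y|^{N+2s}}\,dx\,dy$, hence is $C^1$ with $\langle\Phi_1'(u),\varphi\rangle$ equal to that bilinear form evaluated at $(u,\varphi)$. For $J$, I would compute the G\^ateaux derivative by differentiating under the integral sign: the bound $|a(x)\,g(u+\theta\varphi)\,\varphi| \le c_ac_g\,w\,\big(1 + |u|^{\gamma-1} + |\varphi|^{\gamma-1}\big)|\varphi|$, together with Lemma \ref{L203}, furnishes a $w\,dx$-integrable dominant, yielding $\langle J'(u),\varphi\rangle = \int_{\R^N} a(x)g(u)\varphi\,dx$; a H\"older estimate in the finite measure space $(\R^N, w\,dx)$, with conjugate exponents $\gamma'$ and $\gamma$, gives $|\langle J'(u),\varphi\rangle| \le C\big(1 + [u]_{s,2}^{\gamma-1}\big)[\varphi]_{s,2}$, so $J'(u)\in (D^{s,2}(\R^N))^*$. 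Continuity of $u\mapsto J'(u)$ then follows from the continuity of the Nemytskii operator $u\mapsto g(u)$ on $L^{\gamma}(\R^N,w)$ (the standard Krasnoselskii-type argument on the finite measure $w\,dx$), combined with $D^{s,2}(\R^N)\hookrightarrow L^{\gamma}(\R^N,w)$ and the same H\"older inequality; the degenerate case $\gamma=1$ is handled directly, $g$ being bounded there. Hence $\Phi\in C^1(D^{s,2}(\R^N))$ with $\Phi'$ given by (\ref{R4:204}).

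For weak lower semicontinuity: $\Phi_1$ is convex and strongly continuous, hence sequentially weakly lower semicontinuous, so it suffices to show that $J$ is sequentially weakly continuous. If $u_n\rightharpoonup u$ in $D^{s,2}(\R^N)$, then by the compact embedding $D^{s,2}(\R^N)\hookrightarrow\hookrightarrow L^q(\R^N,w)$ of Lemma \ref{L203} with $q:=\max\{1,\gamma\}<2^*$ one has $u_n\to u$ strongly in $L^q(\R^N,w)$; along a subsequence $u_n\to u$ a.e., and the family $\{w|u_n|^{\gamma}\}$ is uniformly integrable (by Vitali, as a consequence of the $L^{\gamma}(\R^N,w)$-convergence). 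Since $|a\,G(u_n)| \le c_ac_g\,w\,\big(|u_n| + \gamma^{-1}|u_n|^{\gamma}\big)$ is then uniformly integrable and converges a.e.\ to $a\,G(u)$, Vitali's convergence theorem gives $J(u_n)\to J(u)$; the limit being independent of the chosen subsequence, the whole sequence converges. Therefore $\Phi=\Phi_1-J$ is sequentially weakly lower semicontinuous.

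Finally, that critical points are weak solutions is immediate from (\ref{R4:204}): $\Phi'(u)=0$ means $\iint_{\R^N\times\R^N}\frac{(u(x)-u(y))(\varphi(x)-\varphi(y))}{|x-y|^{N+2s}}\,dx\,dy = \int_{\R^N} a(x)g(u)\varphi\,dx$ for every $\varphi\in D^{s,2}(\R^N)$, which is exactly the weak formulation of (\ref{R4:201}) in the sense of Definition \ref{G-D101} with $p=2$ and $f(x,t)=a(x)g(t)$. I expect the main obstacle to be the $C^1$ part — the differentiability of $J$ and the continuity of $J'$ — since that is where the lack of compactness of $\R^N$ must be compensated by the decay of $a$ through the weighted (compact) embeddings; the weak lower semicontinuity rests on the same embeddings, and everything else is soft.
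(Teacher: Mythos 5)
Your proposal is correct and follows the same basic route as the paper for the part the paper actually proves: the paper's proof of this lemma is essentially only the weak lower semicontinuity, obtained, exactly as you do, from the compact embedding $D^{s,2}(\R^N)\hookrightarrow\hookrightarrow L^{\gamma}(\R^N,w)$ of Lemma \ref{L203}; the $C^1$ property and the identification of critical points with weak solutions are stated in the surrounding text as ``readily verified'' and left to the reader. You supply the details the paper omits (G\^ateaux differentiability of $J$ by dominated convergence, boundedness of $J'(u)$ by H\"older in the finite measure $w\,dx$, continuity of $J'$ via the Nemytskii operator between $L^{\gamma}(\R^N,w)$ and $L^{\gamma'}(\R^N,w)$). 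You also state the key point more precisely than the paper: what one needs is that $J$ is sequentially \emph{weakly continuous}, since $J$ enters $\Phi$ with a minus sign — the paper's phrase that $u\mapsto\int a\,G(u)$ is ``weakly lower semicontinuous'' would not by itself yield wlsc of $\Phi=\Phi_1-J$. One small imprecision worth tidying: your H\"older estimate with conjugate exponents $(\gamma',\gamma)$ degenerates when $\gamma=1$; there it is cleaner to use Cauchy–Schwarz in $L^2(\R^N,w)$ (available since $2<2^*$) together with boundedness of $g$ and dominated convergence, which is what I take your parenthetical remark about the $\gamma=1$ case to intend.
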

\begin{proof}
We only need to show that $\Phi$ is weakly lower semicontinous. The norm $[\cdot]_{s,2}$  is weakly lower semicontinuous. From Lemma \ref{L203}  it follows that $D^{s,2}(\R^N)\hookrightarrow\hookrightarrow L^{\gamma}(\R^N,w)$ for $1\le \gamma < 2^*$, and thus
the functional $u\mapsto \int_{\R^N}a(x) G(u)dx$ is weakly lower semicontinuous, which completes the proof.
\end{proof}
The main result of this section reads as follows.
\begin{theorem}\label{T501}
Let $a: \R^N\to \R $ fulfill (Ha), and let the continuous function $g: \R\to\R$ satisfy (\ref{R4:202}).
Suppose $u_0\in D^{s,2}(\R^N)$ is  a solution of the equation (\ref{R4:201}) and a local minimizer in the $V_s$-topology of the  functional $\Phi: D^{s,2}(\R^N)\to  \mathbb{R}$, that is,
there exists  $\varepsilon>0$ such that
$$
\Phi(u_0)\le \Phi(u_0+h), \ \ \forall\ h\in V_s: \|h\|_{V_s}<\varepsilon.
$$
Then $u_0$ is a local minimizer of $\Phi$ with respect to the $[\cdot]_{s,2}$-topology of $D^{s,2}(\R^N)$,  that is, there is  $\delta>0$ such that
$$
\Phi(u_0)\le \Phi(u_0+h), \ \ \forall\ h\in D^{s,2}(\R^N): [h]_{s,2}<\delta.
$$
\end{theorem}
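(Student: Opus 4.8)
The plan is to argue by contradiction, following the classical Brezis--Nirenberg scheme as adapted to nonlocal problems. Suppose $u_0$ is a $V_s$-local minimizer of $\Phi$ but \emph{not} a local minimizer in the $[\cdot]_{s,2}$-topology. Then for every $\delta>0$ the constrained minimization problem
\[
\min\big\{\Phi(u_0+h): h\in D^{s,2}(\R^N),\ [h]_{s,2}\le\delta\big\}
\]
is attained at some $h_\delta$ with $[h_\delta]_{s,2}=\delta$ (the minimum exists because $\Phi$ is weakly lower semicontinuous by Lemma \ref{R4: L401} and the closed ball of $D^{s,2}(\R^N)$ is weakly compact by reflexivity; and the minimum must lie on the boundary sphere, otherwise $u_0$ would be an interior local minimizer). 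Writing $u_\delta:=u_0+h_\delta$, the Lagrange multiplier rule gives $\mu_\delta\le 0$ such that
\[
\langle\Phi'(u_\delta),\varphi\rangle=\mu_\delta\,\langle(-\Delta)^s h_\delta,\varphi\rangle\qquad\forall\varphi\in D^{s,2}(\R^N),
\]
i.e. $u_\delta$ solves $(-\Delta)^s u_\delta-\mu_\delta(-\Delta)^s(u_\delta-u_0)=a(x)g(u_\delta)$. As $\delta\to 0$ one has $u_\delta\to u_0$ strongly in $D^{s,2}(\R^N)$ and $\mu_\delta\to 0$.

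\textbf{The key step is a uniform (in $\delta$) decay and $L^\infty$ estimate for $u_\delta$.} Rewrite the multiplier equation in divergence form: with $k_\delta:=(1+|\mu_\delta|)^{-1}$ or, more precisely, absorbing the multiplier, $u_\delta$ satisfies an equation of the form $(-\Delta)^s u_\delta=\tilde f_\delta(x,u_\delta)$ where $\tilde f_\delta$ still obeys a bound $|\tilde f_\delta(x,t)|\le b_2(x)|t|+b_3(x)$ with $b_2,b_3$ controlled by $\|a\|$, $c_g$, and $[u_\delta]_{s,2}\le[u_0]_{s,2}+\delta$ (one uses here that $1<\gamma<2^*$ exactly as in Corollary \ref{G-C401}, since the multiplier term, being a positive multiple of $(-\Delta)^s(u_\delta-u_0)$ with a small nonpositive coefficient, can be moved to the left and does not spoil the monotonicity structure). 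Hence Theorem \ref{G-T101} / Corollary \ref{G-C401} gives a bound $\|u_\delta\|_\infty\le C$ uniform in $\delta$, and Theorem \ref{D-T101} together with Corollary \ref{R4-C401} gives
\[
|u_\delta(x)|\le C\,\frac{1}{1+|x|^{N-2s}}\qquad\text{for all }x\in\R^N,
\]
with $C$ independent of $\delta$ (all constants depend only on $N,s,\|a\|$ and the uniform bound on $[u_\delta]_{s,2}$). Local H\"older regularity (\cite{MS-2016}) combined with this uniform decay upgrades this to a uniform bound on $\sup_x(1+|x|^{N-2s})|u_\delta(x)|$ and, via compactness of the embedding into $C_{\mathrm{loc}}$ plus the tail control, to
\[
\|u_\delta-u_0\|_{V_s}\longrightarrow 0\qquad\text{as }\delta\to 0.
\]

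\textbf{Conclusion.} Once $\|u_\delta-u_0\|_{V_s}\to 0$, for $\delta$ small enough we have $h_\delta=u_\delta-u_0\in V_s$ with $\|h_\delta\|_{V_s}<\varepsilon$, so the $V_s$-local minimality of $u_0$ forces $\Phi(u_0)\le\Phi(u_\delta)=\Phi(u_0+h_\delta)$. But $h_\delta$ was chosen as a \emph{minimizer} of $\Phi$ over the $\delta$-ball, so $\Phi(u_0+h_\delta)\le\Phi(u_0+h)$ for all $[h]_{s,2}\le\delta$; combined with the reverse inequality just obtained, $\Phi(u_0)=\min_{[h]_{s,2}\le\delta}\Phi(u_0+h)$, which says precisely that $u_0$ \emph{is} a $[\cdot]_{s,2}$-local minimizer, contradicting our assumption. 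This proves the theorem.

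\textbf{Expected main obstacle.} The delicate point is not the contradiction bookkeeping but the \emph{uniformity in $\delta$} of the decay estimate, and in particular handling the Lagrange multiplier term $\mu_\delta(-\Delta)^s h_\delta$ so that the hypotheses of Theorem \ref{G-T101} and Theorem \ref{D-T101} genuinely apply with constants independent of $\delta$. One must verify that the operator $v\mapsto(-\Delta)^s v-\mu_\delta(-\Delta)^s(v-u_0)$ is still of the monotone, $(S_+)$ type needed for the comparison/Wolff-potential machinery of Section \ref{S3} — this works because $-\mu_\delta\ge 0$ so the operator is a nonnegative combination of fractional Laplacians — and that the resulting right-hand side still satisfies (Ha)-type decay with a uniform constant $c_a$, using $\mu_\delta\to 0$ and the uniform $L^\infty$ bound. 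A secondary technical point is the existence and boundary attainment of the constrained minimizer $h_\delta$, which is routine given Lemma \ref{R4: L401} and reflexivity of $D^{s,2}(\R^N)$.
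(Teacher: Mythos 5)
Your global architecture (constrained minimization on balls, Lagrange multiplier, using the $L^\infty$/decay machinery to upgrade to $V_s$-convergence, then invoking $V_s$-minimality) matches the paper's strategy in broad outline, but there is a genuine gap in the key analytic step, and it is exactly at the point you flag as ``the expected main obstacle.''

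You propose to obtain a uniform decay estimate for the solutions $u_\delta$ themselves, namely $|u_\delta(x)|\le C/(1+|x|^{N-2s})$ with $C$ independent of $\delta$, and then to pass from this, via local H\"older compactness and ``tail control,'' to $\|u_\delta-u_0\|_{V_s}\to 0$. This does not work. The $V_s$-norm requires $\sup_x (1+|x|^{N-2s})|u_\delta(x)-u_0(x)|\to 0$, and the uniform decay bound only gives $(1+|x|^{N-2s})|u_\delta(x)-u_0(x)|\le 2C$ for $|x|$ large — a \emph{bounded} but non-vanishing tail. Uniform convergence on compacts plus a uniformly bounded (but not shrinking) tail does not give convergence in $\|\cdot\|_{V_s}$; you need the tail contribution itself to vanish, and nothing in your argument makes $C$ depend on $\delta$ in a way that sends it to zero.

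The paper avoids this by working from the start with the \emph{difference} $h_n=u_n-u_0$ rather than with $u_n$. Since $u_0$ is itself a solution of (\ref{R4:201}), subtracting the two equations (and absorbing the multiplier $\mu_n=-\lambda_n\ge 0$) yields
$(-\Delta)^s h_n=\tfrac{1}{1+\mu_n}a(x)\bigl(g(u_0+h_n)-g(u_0)\bigr)$, an equation for $h_n$ with right-hand side controlled by $|a|$. Applying the $L^\infty$ estimate to $h_n$ gives $\|h_n\|_\infty\le C\max\{[h_n]_{s,2}^{\theta_0},[h_n]_{s,2}\}\to 0$; then uniform continuity of $g$ on bounded sets gives $\varepsilon_n:=\sup_x|g(u_0+h_n)-g(u_0)|\to 0$; and finally comparison of $h_n$ with the solutions of $(-\Delta)^s v=\pm\hat a\,\varepsilon_n$, together with the linearity in the data for $p=2$ and the explicit decay of Corollary \ref{C401}, yields $(1+|x|^{N-2s})|h_n(x)|\le C\varepsilon_n\to 0$. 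The crucial point you are missing is this vanishing factor $\varepsilon_n$ multiplying the decaying profile, which is only visible when the estimates are run on $h_n$, not on $u_\delta$. A secondary, smaller issue: you assert $\mu_\delta\to 0$ without proof; the paper does not need and does not claim this — only the sign $\mu_n\ge 0$ is used, to ensure the prefactor $1/(1+\mu_n)\le 1$.
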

Theorem \ref{T501} is in the spirit of a classical result due to Brezis and Nirenberg for a semilinear elliptic Dirichlet problem  on bounded domains $\Omega$ of the form
\begin{equation}\label{500}
u\in W^{1,2}_0(\Omega): -\Delta u=f(x,u) \mbox{ in }\Omega,\quad u=0 \mbox{ on }\partial\Omega,
\end{equation}
where $f$ is a Carath\'eodory function with $u\mapsto f(\cdot,u)$ satisfying a subcritical growth, and $W^{1,2}_0(\Omega)$ denotes the usual Sobolev space. In \cite{BN}, for the associated energy functional
$$
u\mapsto \frac{1}{2}\int_{\Omega}|\nabla u|^2\,dx-\int_{\Omega}F(x,u)\, dx, \quad\mbox{with }F(\cdot,t)=\int_0^t f(\cdot,\tau)\,d\tau,
$$
a $W^{1,2}_0(\Omega)$ versus $C^1(\Omega)$ local minimizers theorem is proved. Extensions of the  Brezis-Nirenberg result on bounded domains with leading $p$-Laplacian type variational operators have been obtained by several authors (see, e.g., \cite{BGWZ20, GPM00, GST07, Guo-Zhang03, IMS-2020, IMS15,ST13} ). The literature about extensions to unbounded domains, in particular to the whole $\R^N$, is  much less developed.  Extensions to $\R^N$ with  the  Laplacian  as leading operator within the Beppo-Levi space $D^{1,2}(\R^N)$   can be found in  \cite{CCT-JDE, CCT-ACV}, and with $p$-Laplacian equations within $D^{1,p}(\R^N)$ we refer to \cite{CT-2025}. An extension of the Brezis-Nirenberg result to the (unbounded) exterior domain $\R^N\setminus \overline{B(0,1)}$ was obtained in \cite{Carl-20} for the $N$-Laplacian equation in the Beppo-Levi space $D^{1,N}_0(\R^N\setminus \overline{B(0,1)}$,  which is  based on Kelvin transform. The latter, however, only works for $p$-Laplacian equations with $p=2$ or $p=N$.
Finally in the recent paper \cite{Ambrosio-23}, partially motivated by the work in \cite{CCT-JDE}, the author proves a result similar to Theorem \ref{T501} above under the following strong hypotheses on the nonlinearity $g$, see \cite[(G1),(G2)]{Ambrosio-23},
\begin{itemize}
\item[(G1)] $g\in C(\R)$ and $\lim_{|t|\to 0}\frac{g(t)}{|t|}=0 $;
\item[(G2)] there exists $ C > 0$   and $p\in (2,2^*)$ such that
$$
|g(t_1)-g(t_2)|\le C|t_1-t_2|[1+\max\{|t_1|,|t_2|\}^{p-2}
$$
for all $t_1,\,t_2\in\R$.
\end{itemize}
Here, taking advantage of the $L^{\infty}$ and decay estimates proved in the previous two sections for general fractional $p$-Laplacian operators, specializing to the case $p=2$, we are able to prove this result only assuming the weak growth condition
 (\ref{R4:202}) on $g$, improving the result in \cite{Ambrosio-23}.

\smallskip

\centerline{\bf Proof  Theorem \ref{T501}}

\smallskip

\noindent

Let $u_0$ be a solution of (\ref{R4:201}) and a local minimizer of the functional $\Phi$ given by (\ref{R4:203})
in the $V_s$-topology. First we note that by Corollary \ref{G-C401}, the solution $u_0\in L^{\infty}(\R^N)$.
Consider the functional $h\mapsto \Phi(u_0+h)$, and let $h_n: [h_n]_{s,2}\le \frac{1}{n}$  be such that
$$
\Phi(u_0+h_n)=\inf_{h\in B_n} \Phi(u_0+h),\quad\mbox{where } B_n=\Big\{h\in D^{s,2}(\R^N): [h]_{s,2}\le \frac{1}{n}\Big\}.
$$
The existence of a minimizer $h_n$ is guaranteed, since $\Phi: D^{s,2}(\R^N)\to \R$ is $C^1$ and weakly lower semicontinuous and $B_n$ is weakly compact in $D^{s,2}(\R^N)$. Set $u_n=u_0+h_n$, that is,
$$
\Phi(u_n)=\inf_{u\in B_n} \Phi(u),\quad\mbox{where } B_n=\Big\{u\in D^{s,p}(\R^N): [u-u_0]_{s,2}\le \frac{1}{n}\Big\}.
$$
For $u_n\in B_n$ we have either $[u_n-u_0]_{s,2}<\frac{1}{n}$ or else $[u_n-u_0]_{s,2}=\frac{1}{n}$. In case $[u_n-u_0]_{s,2}<\frac{1}{n}$, $u_n$ is a critical point of $\Phi$, and thus $u_n$ is a weak solution of (\ref{R4:201}), i.e., $(-\Delta)^s u_n=a(x) g(u_n)$.
In case $[u_n-u_0]_{s,2}=\frac{1}{n}$, by Lagrange multiplier rule, there exists a Lagrange multiplier $\lambda_n\le 0$ such that
(in weak sense)
\begin{equation}\label{501}
(-\Delta)^s u_n-a(x) g(u_n)=\lambda_n(-\Delta)^s(u_n-u_0).
\end{equation}
Taking into account that $u_0$ is a solution of (\ref{R4:201}) and using (\ref{501}), we get
\begin{equation}\label{502}
(-\Delta)^s(u_n-u_0)-\lambda_n(-\Delta)^s(u_n-u_0)= a(x) (g(u_n)-g(u_0)),
\end{equation}
and thus  $h_n=u_n-u_0$ satisfies the equation
\begin{equation}\label{503}
(-\Delta)^sh_n= \frac{1}{1+\mu_n}a(x) (g(u_0+h_n)-g(u_0)),
\end{equation}
where $\mu_n:=-\lambda_n\ge 0$. Set $f_n(x,t)=\frac{1}{1+\mu_n}a(x) (g(u_0(x)+t)-g(u_0(x)))$, then
\begin{equation}\label{504}
|f_n(x,t)|\le C|a(x)|\left(1+|t|^{\gamma-1}\right), \quad\mbox{uniformly }\forall n,
\end{equation}
where $C=C(c_g,\gamma,\|u_0\|_{\infty})$. Set $\hat{a}(x)=C|a(x)|$, then clearly $\hat{a}$ satisfies (Ha), which along with the growth estimate (\ref{504}) allows us to apply Corollary \ref{G-C401} to the solutions $h_n$ of (\ref{503}), and obtain
\begin{equation}\label{505}
\|h_n\|_{\infty}\le C\max\left\{\|h_n\|_{2^*}^{\theta_0}, \|h_n\|_{2^*}\right\}\le C\max\left\{[h_n]_{s,2}^{\theta_0}, [h_n]_{s,2}\right\}
\end{equation}
where $C=C(N,s,\|\hat a\|, [h_n]_{s,2})$. Since $[h_n]_{s,2}\to 0$ as $n\to \infty$, from (\ref{505}) it follows that $\|h_n\|_{\infty}\to 0$, which in view of the uniform continuity of $t\mapsto (g(u_0+t)-g(u_0))$ shows that
\begin{equation}\label{506}
\varepsilon_n:=\sup_{x\in\R^N}|g(u_0(x)+h_n(x))-g(u_0(x))|\to 0,\quad\mbox{as } n\to \infty.
\end{equation}
Consider
\begin{equation}\label{507}
v_n\in D^{s,2}(\R^N): (-\Delta)^sv_n= \hat{a}(x)\varepsilon_n,
\end{equation}
and
\begin{equation}\label{508}
z_n\in D^{s,2}(\R^N): (-\Delta)^sz_n= -\hat{a}(x)\varepsilon_n.
\end{equation}
By comparison we see that $z_n\le h_n\le v_n$, and applying Theorem \ref{T401} along with Corollary \ref{C401} with $p=2$, we obtain
$$
v_n(x)\le C\frac{\varepsilon_n}{1+|x|^{N-2s}}\quad\mbox{and } z_n(x)\ge -C\frac{\varepsilon_n}{1+|x|^{N-2s}},
$$
which shows that
$$
|h_n(x)|\left(1+|x|^{N-2s}\right)\le C\,\varepsilon_n \to 0, \quad\mbox{uniformly, as } n\to \infty,
$$
that is, $\|h_n\|_{V_s}\to 0$, since $h_n\to 0$ in $D^{s,2}(\R^N)$. Finally, since $u_0$ is a local minimizer of $\Phi$ in the $V_s$-topology we get with $h_n\to 0$ in $V_s$ for $n$ large
$$
\Phi(u_0) \le \Phi(u_0+h_n)=\Phi(u_n)=\inf_{h\in B_n} \Phi(u_0+h),
$$
where
$$
B_n=\left\{h\in D^{s,2}(\R^N): [h_n]_{s,2}\le \frac{1}{n}\right\},
$$
which proves that $u_0$ must be a local minimizer of $\Phi$ in the $[\cdot]_{s,2}$-topology completing the proof of Theorem \ref{T501}. \hfill $\Box$

\section*{Declarations}
\begin{itemize}
\item Funding:  No funding was received to assist with the preparation of this manuscript.
\item Conflict of interest: The authors have no conflicts of interest to declare that are relevant to the
content of this article.
\item Availability of data and materials: The authors declare that the data supporting the findings of this study are
available within the paper.
\end{itemize}

%%%%%%%%%%%%%%%%%%%%%%%%%%%%%%%%%%%%%%%%%%%%%%%%%%%%

%%%%%%%%%%%%%thebibliography%%%%%%%%%%%%%%%%%%%%%%%%%%%%%%%%%%%

\end{document}